\newcommand{\fhi}{\varphi}
\newcommand{\N}{\mathbb{N}}
\newcommand{\RR}{\mathbb{R}}
\newcommand{\phip}{\varphi_{p}}
\newcommand{\phid}{\varphi_{d}}
\newcommand{\phih}{\varphi_{h}}
\newcommand{\vu}{\bm{u}}
\newcommand{\de}{\partial}
\newcommand{\pdnu}{\partial_{{\norma}}}
\DeclareMathOperator{\inte}{int}
\newcommand{\norma}{\bm{n}}
\newcommand{\io}{\int_\Omega}
\newcommand{\OO}{_{\Omega}}
\newcommand{\Triangle}{\Delta}
\newcommand{\eps}{\varepsilon}
\newcommand{\abs}[1]{\left| #1 \right|}
\newcommand{\norm}[1]{\| #1 \|}
\newcommand{\inner}[2]{\langle #1 , #2 \rangle}
\newcommand{\Laplace}{\Delta}
\renewcommand{\div}{\, \mathrm{div}\,}
\newtheorem{thm}{Theorem}[section]
\newtheorem{lemma}[thm]{Lemma}
\newtheorem{remark}{Remark}[section]
\newtheorem{defn}{Definition}[section]
\newtheorem{assump}{Assumption}[section]
\numberwithin{equation}{section}
\newenvironment{giuliorem}{\color{red}}{\color{black}}
\newcommand{\III}{\begin{giuliorem}}
\newcommand{\EEE}{\end{giuliorem}}
 \newenvironment{bettirem}{\color{magenta}}{\color{black}}
\newcommand{\RRR}{\begin{bettirem}}
\newcommand{\FFF}{\end{bettirem}}
\newenvironment{Andrewrem}{\color{blue}}{\color{black}}
\newcommand{\AAA}{\begin{Andrewrem}}
\newcommand{\BBB}{\end{Andrewrem}}
\newenvironment{Sergiorem}{\color{cyan}}{\color{black}}
\newcommand{\SSS}{\begin{Sergiorem}}
\newcommand{\TTT}{\end{Sergiorem}}
\begin{document}

\title{On a multi-species Cahn--Hilliard--Darcy tumor growth model with singular potentials}
\author{Sergio Frigeri \footnotemark[1] \and Kei Fong Lam \footnotemark[2] \and Elisabetta Rocca \footnotemark[3] \and Giulio Schimperna \footnotemark[3]}

\date{\today}

\maketitle

\renewcommand{\thefootnote}{\fnsymbol{footnote}}
\footnotetext[1]{Dipartimento di Matematica e Fisica,
Universit\`a Cattolica del Sacro Cuore,
Via dei Musei 41, I-25121 Brescia, Italy ({\tt sergio.frigeri.sf@gmail.com}).}
\footnotetext[2]{Department of Mathematics,
The Chinese University of Hong Kong,
Shatin, N.T., Hong Kong
({\tt kflam@math.cuhk.edu.hk}).}
\footnotetext[3]{Di\-par\-ti\-men\-to di Ma\-te\-ma\-ti\-ca, Uni\-ver\-si\-t\`a di Pa\-via, and IMATI - C.N.R., Pavia, Via Fer\-ra\-ta 1, I-27100, Pa\-via, Italy
({\tt e\-li\-sa\-bet\-ta.rocca@unipv.it, giusch04@unipv.it}).}

\begin{abstract}
We consider a model describing the evolution of a tumor inside a host tissue in terms
of the parameters $\phip$, $\phid$ (proliferating and dead cells, respectively), $\vu$ (cell velocity) and $n$ (nutrient concentration).  The variables $\phip$, $\phid$
satisfy a Cahn--Hilliard type system with nonzero forcing term (implying that their spatial means are not conserved in time), whereas $\vu$ obeys a form of the Darcy law and $n$ satisfies a
quasi-static diffusion equation.  The main novelty of the present work stands in the fact that we are able to consider a configuration potential of singular type implying that the concentration vector $(\phip,\phid)$ is constrained to remain in the range of physically admissible values.
On the other hand, in view of the presence of nonzero forcing terms, this choice gives rise to a number of mathematical difficulties, especially related to the control of the mean values of $\phip$ and $\phid$.  For the resulting mathematical problem, by imposing suitable initial-boundary conditions, our main result concerns 
the  existence of weak solutions in a proper regularity class.
\end{abstract}

\noindent \textbf{Key words. }  Tumor growth, nonlinear evolutionary system, Cahn--Hilliard--Darcy system,
existence of weak solutions, logarithmic potentials.

\noindent \textbf{AMS subject classification.} 35D30, 35Q35, 35Q92, 35K57, 76S05, 92C17, 92B05.


\section{Introduction}

Tumor growth remains an active area of scientific research due to the impact on the quality of life for those diagnosed with cancer.  Starting with the seminal work of
Burton \cite{Burton} and Greenspan \cite{Greenspan}, many mathematical models have been proposed to emulate the complex biological and chemical processes that occur in tumor growth
with the aim of better understanding and ultimately controlling the the behavior of cancer cells.
In recent years, there has been a growing interest in the mathematical modelling of cancer, see for example \cite{Araujo,Bellomo,Byrne,Cristini,Fasano,Friedman}.
Mathematical models for tumor growth may have different analytical features: in the present work
we are focusing on the subclass of  continuum models, namely diffuse interface models. In this framework, the tumor and surrounding host tissue occupy regions of a domain and are
subject to various balance laws mimicking the biological processes one would like to model.  While it is intuitive to represent the interfaces between the tumor and healthy tissues as
idealized surfaces of zero thickness, leading to a sharp interface description that differentiates the tumor and the surrounding host tissue cell-by-cell, these kinds of sharp
interface models are often difficult to analyze mathematically, and may break down when the interface undergoes a topological change.
In particular, sharp interface models may fail to describe the process of metastasis, which is the spreading of cancer to other parts of the body, and the primary characteristic that makes cancer so deadly.

On the other hand, diffuse interface models consider the interface between the tumor and the healthy tissues as a narrow layer in which tumor and healthy cells are mixed.
This alternative representation of the interface gives rise to model equations that are better amenable to mathematical analysis, and the mathematical description remains
valid even when the tumor undergoes topological changes.  Hence, the recent efforts in the mathematical modeling of tumor growth have been mostly focused on diffuse interface
models, see for example \cite{CLLW,Cristini,Frieboes,GLNS,GLSS,Hawkins,Oden,Wise}, and their numerical simulations demonstrating complex changes in tumor morphologies
due to mechanical stresses and interactions with chemical species such as nutrients or toxic agents.

The interaction of multiple tumor cell species can be described by using multiphase mixture models
(see, e.g., \cite{Araujo, Dai, Frieboes, GLNS, Sciume, Wise}). Indeed, using multiphase porous media mechanics,
the authors of \cite{Sciume} represented a growing tumor as a multiphase medium containing an
extracellular matrix, tumor and host cells, and interstitial liquid. Numerical simulations were also
performed that characterized the process of cancer growth in terms of the initial tumor-to-healthy cell
density ratio, nutrient concentration, mechanical strain, cell adhesion, and geometry.
The interactions of a growing tumor and a basement membrane were studied
in \cite{Bresch}.  In \cite{Chen} the authors adapted the approach from \cite{Bresch}
to the case of multiphase mixture models. Although the model studied in \cite{Chen} contains
numerous simplifications, the underlying approach has been successfully used
by other authors, also with the purpose of comparing the outcome of numerical tests with
experimental data (e.g., \cite{Frieboes}). Furthermore, the modeling strategy and the numerical
methods presented in \cite{Chen} are generalizable to more sophisticated and thermodynamically
consistent models that account for additional biophysical details (e.g., \cite{Cristini}). For these
reasons, we will use here a modeling approach closely related to that of \cite{Chen},
with few modifications that will be detailed here below.

In terms of the theoretical analysis of diffuse interface models, most of the recent literature is restricted
to the two-phase variant, i.e., models that only account for the evolution of a tumor surrounded by
healthy tissue. In this setting, there is no differentiation among the tumor cells  that exhibit heterogeneous growth behavior,
and consequently this kind of two-phase models are just capable to describe the growth of a young tumor before
the onset of quiescence and necrosis.  Analytical results related to existence of weak and strong solutions, and,
in some cases, continuous dependence on data, asymptotic limits and long-time behavior have been established in
\cite{CGH,CGRSAA,CGRSVV,FGR,FLR,GLDirichlet,GLDarcy,GLNeumann,RoccaScala}
for tumor growth models based on the coupling of Cahn--Hilliard (for the tumor density)
and reaction--diffusion (for the nutrient or other chemical factors) equations,
and in \cite{GLDarcy,GGW,JWZ,LTZ,Melchionna} for models of Cahn--Hilliard--Darcy type.
There have also been some studies involving the optimal control and sliding modes
for diffuse interface tumor growth, see, e.g., \cite{CGMR,CGRSOpCon,GLR}.

Comparatively, there have been fewer analytical results for the multi-phase variants,
which distinguish between the proliferating and necrotic tumor cells.
We just mention and briefly discuss the recent works of \cite{Dai, GLNS}. In \cite{GLNS}, a Cahn--Hilliard--Darcy model
is derived to describe multiphase tumor growth taking interactions with multiple chemical species into account as
well as the simultaneous occurrence of proliferating, quiescent and necrotic regions. A new feature of the modeling
approach of \cite{GLNS} is that a volume-averaged velocity is used, which dramatically simplifies the resulting
equation for the mixture velocity. With the help of formally matched asymptotic analysis the authors also develop new sharp interface models.
On the other hand, in \cite{Dai}, the authors analyze the multi-species Cahn--Hilliard--Darcy tumor model of~\cite{Chen}
in the case of constant and identical mobilities for all the species and establish the existence of a weak solution.
We actually believe it is important to analyze these multi-species models, as once
the tumor experiences quiescence and necrosis due to lack of nutrient, it will start secreting growth factors that induce the
development of new blood vessels towards the tumor, a process known as angiogenesis,
and it is through these new blood vessels that tumors tend to metastasize into other parts of the body.

In this contribution, we actually consider a multi-species tumor model posed in a smooth bounded domain
$\Omega \subset \RR^{d}$, $d \in \{2,3\}$, and over a reference time interval $(0,T)$ with no restriction
on the magnitude of $T$. Our model describes the evolution of proliferating tumor cells, necrotic tumor
cells and healthy host cells.  We denote the corresponding volume fractions as $\phip, \phid, \phih \in [0,1]$, respectively,
so that $\phip + \phid + \phih = 1$ almost everywhere in $\Omega \times (0,T)$.  By this relation, it suffices to track the
evolution of $\phip$ and $\phid$ in order to deduce the evolution of $\phih$; for this reason it is also
natural to assume that the vector $\bm{\varphi} := (\phip, \phid)^{\top}$ lies in the simplex
$\Triangle := \{ \bm{y} \in \RR^{2} :~0 \leq y_{1}, y_{2},~ y_{1} + y_{2} \leq 1 \} \subset \RR^{2}$.
This {\sl constraint}\/ will be one of the key points in our approach and we will explain below
how it is enforced by the equations.

In order to introduce our mathematical model we start by making some assumptions on the biological mechanisms experienced by the tumor cells.
Let $n$ denote the concentration of a nutrient chemical species that is present in $\Omega$.  Then, the proliferating tumor cells
may grow by consuming the nutrient, and may die by apoptosis or necrosis in case of nutrient deficiency.  Correspondingly,
necrotic cells will increase in mass due to the apoptosis/necrosis of proliferating tumor cells, but they may also
disintegrate back into basic components. On the other hand, the dynamics of the healthy cells occurs on a much slower timescale
than that of the tumor cells, since we expect that compared to the strictly regulated mitosis cycle of the healthy cells, in the tumor
certain growth-inhibited proteins have been switched off by mutations and leads to unregulated growth behavior
which is only limited by the supply of nutrients.

Assuming the cells are tightly packed and move together, we may introduce a cell velocity field $\vu$ which
is the same for all types of cells.  Letting $\bm{J}_{i}$, $i \in \{p, d, h \}$, denote the mass fluxes for the tumor cells,
then the general balance law for the volume fractions reads as
\begin{align}\label{proto:phi}
\de_{t} \varphi_{i} + \div( \varphi_{i}  \vu) = -\div \bm{J}_{i} + S_{i} \text{ for } i \in \{p,d,h \},
\end{align}
where, in view of the above discussion, we set $S_{h} = 0$, whereas $S_{p}, S_{d}$ may depend on $n$, $\phip$ and $\phid$.
Note that it is sufficient to determine the evolution of $\phip$ and $\phid$ in view of the fact
that $\phih$ can be determined at any point as $1-\phip - \phid$.
We assume that the tumor growth process tends to evolve towards (local) minima of the following
free energy functional $E(\phip, \phid)$ of Ginzburg--Landau type:
\begin{align}\label{Energy}
E(\phip, \phid) := \int_{\Omega} F(\phip, \phid) + \frac{1}{2} \abs{\nabla \phip}^{2} + \frac{1}{2} \abs{\nabla \phid}^{2} dx,
\end{align}
where $F$ is a multi-well configuration potential for the variables $\phip$ and $\phid$.

More precisely, we assume $F$ to be the sum of a smooth non-convex part $F_{1}$
and of a nonsmooth {\sl singular}\/ convex part $F_{0}$. Namely, $F_{0}$ is identically set to $+\infty$
outside the set $\overline{\Triangle} = \{ (s,r) \in \RR^2: ~s\geq 0, r \geq 0, s+r \leq 1 \}$ of the ``physically admissible'' configurations.
In other words, if $\int_{\Omega} F_{0}(\phip, \phid) \, dx$ is finite, then we necessarily have $\phip, \phid \geq 0$ and $\phip + \phid \leq 1$.
Moreover, as a further consequence, due to the fact that $\phih = 1 - \phip - \phid$, we also have $0 \leq \phih \leq 1$.
In other words, the finiteness of the ``configuration energy'' $\int_{\Omega} F(\phip, \phid) \, dx$
automatically ensures the natural bounds
\begin{align}\label{nat:bound}
   0 \leq \phip, \phid, \phih \leq 1 \text{ a.e. in } \Omega.
\end{align}
An example of singular potential $F_0$ that we can include in our analysis and that was already introduced in \cite{Wise} is
\begin{equation}\label{log}
\begin{aligned}
F_{0}(\phip, \phid) &:= \phip \log \phip + \phid \log \phid + (1-\phip-\phid) \log (1-\phip-\phid).
\end{aligned}
\end{equation}
This can be seen as a generalization of the standard one-component logarithmic potential
commonly used in the framework of Cahn--Hilliard equations (cf.~for example \cite{CF, MZ}).
Let us also comment that, in light of the above
considerations, the pure phase consisting of proliferating tumor cells is characterized by
the region $\{ \phip = 1,~\phid = \phih = 0\}$, whereas the pure phase corresponding to the necrotic
cells is the region $\{ \phid = 1,~\phip = \phih = 0\}$. The fluxes $\bm{J}_{i}$, $i = p,d$, are defined as follows:
\begin{align*}
\bm{J}_{i} = - M_{i} \nabla \mu_{i}, \quad \mu_{i} := \frac{\delta E}{\delta \varphi_{i}} = - \Laplace \varphi_{i} + F_{,\varphi_{i}} \text{ for } i = p,d,
\end{align*}
where $M_{p}, M_{d}$ are positive constants which can be different from each other.  We set $\bm{J}_{h} = - \bm{J}_{p} - \bm{J}_{d}$, then
upon summing up the three equations \eqref{proto:phi}, for $i = p,d,h$, using the fact that $\phip+\phid+\phih=1$ and $S_h=0$, we deduce the following relation:
\begin{align*}
\div \vu=S_p+S_d=:S_t \,.
\end{align*}
The velocity field $\vu$ is assumed to fulfill Darcy's law (cf.~\cite{Wise}):
\begin{align*}
\vu = - \nabla q - \phip \nabla \mu_{p} - \phid \nabla \mu_{d},
\end{align*}
where $q$ denotes the cell-to-cell pressure and the subsequent two terms have the meaning
of Korteweg forces. Regarding the nutrient $n$, since the time scale of nutrient diffusion is much faster (minutes) than the rate of cell
proliferation (days), the nutrient is assumed to evolve quasi-statically:
\begin{align}
0 = -\Laplace n + \phip n,
\end{align}
where the second term on the right-hand side models consumption by the proliferating tumor cells. Notice that we have neglected the nutrient uptake by host tissue because this is small compared with the
uptake by tumor cells (e.g., see \cite{Wise} for more details).
Moreover we have also neglected here the capillarity term $T_C=B(\phip, \phid)(n_C-n)$
considered, e.g., in \cite{Dai}, just for the sake of simplicity. Actually, we could include it in our analysis
assuming that the function $B(\phip, \phid)$ is smooth and non-negative and the coefficient
$n_C$ is strictly between $0$ and $1$ (cf, e.g., \cite[(2.6)]{Dai}). These assumptions
would indeed guarantee the maximum principle is still true for the modified nutrient equation.
Hence, setting $Q := \Omega \times (0,T)$ and $\Gamma := \de \Omega \times (0,T)$,
we are led to the following multi-species tumor model:
\begin{subequations}\label{Model}
\begin{alignat}{2}
\label{eq:p}
\de_{t}\phip & = M_{p} \Laplace \mu_{p} - \div (\phip \vu) + S_{p},  \quad \mu_{p} = F_{,\phip} - \Laplace \phip && \text{ in } Q, \\
\label{eq:d}
\de_{t}\phid & = M_{d} \Laplace \mu_{d} - \div (\phid \vu) + S_{d}, \quad \mu_{d} = F_{,\phid} - \Laplace \phid && \text{ in } Q, \\
\label{eq:Sp}
S_{p} &= \Sigma_{p}(n, \phip, \phid) + m_{pp} \phip + m_{pd} \phid && \text{ in } Q,\\
\label{eq:Sd}
S_{d} &= \Sigma_{d}(n, \phip, \phid) + m_{dp} \phip + m_{dd} \phid && \text{ in } Q, \\
\label{eq:u}
\div \vu & = S_{p} + S_{d} && \text{ in } Q, \\
\label{eq:q}
\vu & = - \nabla q - \phip \nabla  \mu_{p} - \phid \nabla \mu_{d} && \text{ in } Q, \\
\label{eq:n}
0 & = -\Laplace n + \phip n && \text{ in } Q,
\end{alignat}
\end{subequations}
where the precise assumptions on the mass source terms $\Sigma_p$, $\Sigma_d$ will be specified in Section~\ref{sec:mainresults}.

We just give here an example of source terms in \eqref{Model} that fulfill the assumption \eqref{struct:2} stated in Section~\ref{sec:mainresults}.
Namely, we may set
\begin{align}\label{sp}
S_p & = \lambda_{M} g(n) - \lambda_{A} \phip, \\
\label{sd}
S_d & = \lambda_A \phip - \lambda_L \phid,
\end{align}
for positive constants $\lambda_M, \lambda_A, \lambda_L$ and a bounded positive function $g$ such that $0 < g(s) \leq 1$.
The archetypal example is $g(s) = \max(n_{c}, \min(s,1))$ for some constant $n_{c} \in (0,1)$. The biological effects that we want to model
here are:  the growth of the proliferating tumor cells due to nutrient consumption at a constant rate $\lambda_M$, the death of
proliferating tumor cells at a constant rate $\lambda_A$, which leads to a source term for the necrotic cells,
and the lysing/disintegration of necrotic cells at a constant rate $\lambda_L$.

We supplement the system with the following boundary conditions:
\begin{align}\label{bc}
M_{i} \pdnu \mu_{i} - \varphi_{i} \vu \cdot \norma = 0, \quad \pdnu \varphi_{i} = 0, \quad  q = 0, \quad n  = 1 \text{ on } \Gamma,
\end{align}
where $\pdnu$ denotes the outer normal derivative to $\Gamma=\partial\Omega$,
and with the initial conditions
\begin{align}\label{init}
\phip(x,0) = \varphi_{p,0}(x), \quad \phid(x,0) = \varphi_{d,0}(x)  \text{ in } \Omega.
\end{align}
Note that, implicitly, we are also assuming that $\phih(x,0) = 1- \varphi_{p,0}(x) - \varphi_{d,0}(x)$ in $\Omega$.

\bigskip

Let us now compare \eqref{Model} and the model of \cite{Chen} which was analyzed by \cite{Dai}:
\begin{itemize}
\item We use a simpler nutrient equation \eqref{eq:n} which only accounts for diffusion and consumption by the proliferating tumor cells.
In particular, we have neglected the nutrient capillary source term for simplicity (though, as remarked before, such a term could
be treated by standard modifications).
\item In \cite{Chen}, the effect of a basement membrane on the growing tumor is also considered, which leads to additional coupling
of the model with a Cahn--Hilliard equation transported by the velocity $\vu$.  In this work we do not consider such effects.
\item The key distinction is that in our choice of a multi-well potential $F$ in \eqref{Energy}, we included interfacial energy for the
proliferating-necrotic tumor interface and also for the tumor-host interfaces.  On the other hand, in \cite{Chen} the free energy depends only on the total
tumor volume fraction $\varphi_{T} = \phip + \phid$, i.e., $E(\varphi_{T}) = \int_{\Omega} f(\varphi_{T}) + \frac{1}{2} \abs{\nabla \varphi_{T}}^{2} \, dx$
for scalar double-well potential $f$ with minima at $0$ and $1$.  This reduction to the total tumor volume fraction implies that the proliferating-necrotic
tumor interface in \cite{Chen} is not energetic.
\item More precisely, in \cite{Chen}, like in the multiphase models studied in \cite{Frieboes, Wise, Youssefpour}, the differentiation between proliferating
and necrotic tumor cells is done a posteriori based on the local density of nutrients after computing $\varphi_{T}$.  In contrast, our model \eqref{Model}
follows a similar approach to \cite{GLNS} in which $\phip$ and $\phid$ are computed without any post processing.
\item Moreover, we consider here different boundary conditions with respect to \cite{Chen}, where
a zero Dirichlet boundary datum was taken for the
chemical potentials, while here we consider a coupled condition for $\mu_i$ and $\vu$
(the first of \eqref{bc}). It is worth noting that the (easier) case of Dirichlet boundary conditions for $\mu_i$
could also be treated, but we preferred to handle \eqref{bc} which seems to be more reasonable from the modeling point of view.
On the contrary, the case of no-flux conditions for $\mu_i$ (which would also be meaningful) seems
not easy to be treated mathematically.
\item Finally, differently from \cite{Dai}, we assume here that different cell species are characterized by different mobility coefficients.
Actually, this choice gives rise to a number of mathematical complications. In particular, here we cannot reduce the evolution of the tumor cells
(as was done in \cite{Dai}) to a single Cahn--Hilliard equation coupled with a transport-type relation, but we need to consider a vectorial Cahn--Hilliard
system, for which, however, several issues are still open. On the other hand, here we get stronger regularity due to the fact that we do not have a transport equation anymore.
\end{itemize}
Mathematically speaking, the main novelty of our model, and also its main difficulty from the analytical point of view, comes from the singular component $F_{0}$
of the configuration potential coupled with the nonzero source terms in the Cahn--Hilliard relations
\eqref{eq:p}-\eqref{eq:d}. Indeed, integrating the first relations in \eqref{eq:p}, \eqref{eq:d}
we obtain an evolution law for the spatial mean values $y_{i} := \frac{1}{\abs{\Omega}} \int_{\Omega} \varphi_{i} \, dx$ of $\varphi_{i}$ for $i = p,d$
(cf.~\eqref{sist:mean} below) which is satisfied by any hypothetical solution
to the system.  Such a relation, however, does not involve directly the singular part $F_{0}$.  Hence, the evolution of $y_{p}, y_{d}$
are not automatically compatible with the physical constraint \eqref{nat:bound} and this compatibility (i.e., the fact that $y_{p}$,
$y_{d}$ remain well inside the set of meaningful values) has to be carefully proved (see Subsec.~\ref{subs:apr})
by assuming proper conditions on coefficients
and making a careful choice of the boundary conditions. In particular, the first condition in \eqref{bc} linking the boundary
values of $\vu$, $\varphi_{i}$ and $\mu_{i}$ seems to be necessary in order for our arguments to work. It is worth noting that very few results are nowadays available for multi-component Cahn--Hilliard systems.
Among these, we mention the recent contributions \cite{Boyer, Conti, Li}, related to the case of regular (multi-well)
potentials, whereas up to our knowledge, the only paper dealing with a singular multi-well potential
like the logarithmic one~\eqref{log} introduced in \cite{Wise} is \cite{CKRS}. Indeed, some estimates
proved in \cite{CKRS} will be used in the proof of our results.

Concerning the approach we employed to prove existence of weak solutions to system \eqref{eq:p}--\eqref{eq:n},
the first step is to consider a regularized version of this problem, which is obtained by replacing
the singular potential $F_0$ by a regular one depending on an approximation parameter $\eps>0$,
and also by introducing some suitable truncation functions.
We then present two independent methods that permit us to prove existence of a solution to the regularized system.
The first one is based on a further regularization and a Schauder fixed point argument, whereas
the second proof exploits a direct implementation of a Faedo-Galerkin scheme.
Actually, we have decided to detail two different proofs, since, in our opinion, both of them
present some independent interest. Indeed, the first method only exploits
elementary existence and uniqueness results methods for PDEs and also permits us to split
our complicated model into its basic components and show how they can be treated separately.
On the other hand, the Faedo-Galerkin method is more direct (no further regularizing terms are introduced),
and constructive (hence, it may be used for a numerical approximation of the problem).
%

\paragraph{Plan of the paper.} The assumptions and main results are stated in Section~\ref{sec:mainresults}.
The proof is carried out in the remainder of the paper and is subdivided into several steps: namely, in Section~\ref{sec:scheme}
regularized version of our model is introduced, where the singular part of the potential $F_{0}$ is replaced by a smooth
approximation and some terms are truncated in order to maintain some boundedness
properties that are used in the a priori estimates.
Moreover, still in Section~\ref{sec:scheme}, the fixed point method for existence
is outlined, whereas the alternative Faedo-Galerkin procedure is presented
in Section~\ref{sec:Galerkin}. Once existence is established for the regularized model,
in Section~\ref{sec:limit} some bounds that are
independent of the regularization parameters are derived in
order to pass to the limit in the approximation scheme via compactness
tools and to obtain in this way existence of weak solutions for the original system.


\section{Assumptions and main result}\label{sec:mainresults}
We start presenting our assumptions on parameters and data:
\begin{assump}\label{ass:pd}
\begin{enumerate}[label=$(\mathrm{A \arabic*})$, ref = $(\mathrm{A \arabic*})$]
\item $M_{p}, M_{d}$ are strictly positive constants.
\item We set $\Sigma := (\Sigma_p,\Sigma_d)$ and denote as $\underline{\underline{M}} = (m_{ij})$, $i,j \in \{p,d\}$,
the matrix of the coefficients in \eqref{eq:Sp}, \eqref{eq:Sd}.  Then we assume that there exists
$c \geq 0$ such that
\begin{align}\label{struct:Sig}
  \Sigma \in C^{0,1}(\RR^3; \RR^2), \quad \norm{\Sigma}_{L^{\infty}(\RR^3;\RR^2)} + \norm{D \Sigma}_{L^{\infty}(\RR^3;\RR^6)} \leq c.
\end{align}
In other words, $\Sigma$ is globally Lipschitz. More precisely, we assume that there exist a closed and sufficiently regular subset
$\Delta_{0}$ contained in the open simplex $\Delta$ and constants $K_{p,-},K_{p,+},K_{d,-},K_{d,+}\in \RR$,
with $K_{p,-}\le K_{p,+}$ and $K_{d,-}\le K_{d,+}$, such that $\Sigma(\RR^3)\subset [K_{p,-},K_{p,+}]\times [K_{d,-},K_{d,+}]$.
Moreover,
for any $\bm{x}=(x_p,x_d) \in [K_{p,-},K_{p,+}]\times [K_{d,-},K_{d,+}]$,
there holds
\begin{equation}\label{struct:2}
( \underline{\underline{M}} \bm{y} + \bm{x} ) \cdot \norma < 0 \text{ for all }\, \bm{y} \in \de \Delta_0,
\end{equation}
where $\norma$ denotes the outer unit normal vector to $\Delta_0$.
\item The potential $F$ is the sum of a convex part $F_{0}$ and of a (possibly nonconvex) perturbation $F_{1}$.  More precisely, we assume that
$F_{0} : \RR^{2} \to [0,+\infty]$, with the {\it effective domain} of $F_0$ (i.e., the set where $F_0$ assumes {\it finite} values) being given
either by $\Delta$ or by the closure $\overline{\Delta}$.  Moreover, we assume $F_1 \in C^{1,1}(\RR^{2})$ with
\begin{align}
| \nabla F_{1}(s,r) | \leq C (1 + |s | + |r| ) & \quad \forall r, s \in \RR,
\end{align}
while $F_0 \in C^{1}(\Delta ;[0,\infty))$, i.e., $F_0$ is smooth once restricted to the
simplex $\Delta$ and there exists constants $c_{1}, c_{3} > 0$ and $c_{2}, c_{4}\ge 0$ such that
\begin{align}
\label{F0:super:quad}
F_0(s,r) \geq c_{1}( \abs{s}^{2} + \abs{r}^{2}) - c_{2} \quad \forall (s,r) \in \overline{\Delta},
\end{align}
and
\begin{align}\label{Key:Delta}
\nabla F_{0}(s,r) \cdot (s - S, r - R)^{\top} \ge c_{3} | \nabla F_{0}(s,r)| - c_{4}
\end{align}
for all $(s,r) \neq (S,R) \in \Delta$.
\item The initial conditions satisfy $\varphi_{p,0}, \varphi_{d,0} \in H^{1}(\Omega)$ with
\begin{align}\label{in:Delta}
 0 \leq \varphi_{p,0}, \quad 0\le \varphi_{d,0}, \quad \varphi_{p,0} + \varphi_{d,0} \leq 1 \,
   \text{ a.e. in } \Omega.
\end{align}
Moreover, the mean values
\begin{equation}\label{def:Y0}
Y_{i,0}:= \frac{1}{\abs{\Omega}} \int_{\Omega} \varphi_{i,0}(x) \, dx
\end{equation}
for $i = p,d$ satisfy
\begin{align}\label{assump:initial<1}
 ( Y_{p,0} , Y_{d,0} ) \in \inte \Delta_0.
\end{align}
Finally, we assume that
\begin{align}\label{fin:ener}
F_{0}(\varphi_{p,0},\varphi_{d,0}) \in L^{1}(\Omega).
\end{align}
%
%
\end{enumerate}
\end{assump}

\smallskip
\noindent
\paragraph{Examples.}  In order to clarify the above assumptions, and particularly those
regarding the potential $F$, we introduce one example
which is particularly significant and will be considered as a model case in the sequel.
Namely, we consider the multi-phase logarithmic potential
\begin{equation}\label{log:pot}
\begin{aligned}
F_{0}(s,r) &:= s \log s + r \log r + (1-s-r) \log (1-s-r), \\
F_{1}(s,r) & := \frac{\chi}{2} \big ( r (1-r) + s(1-s) + (1-r-s)(r+s) \big ),
\end{aligned}
\end{equation}
for a fixed positive constant $\chi$.

Then, the assumption \eqref{F0:super:quad} is easily fulfilled by \eqref{log:pot} due to the boundedness of the simplex $\Delta$,
and the assumption \eqref{Key:Delta} is also fulfilled as we will prove in Lemma~\ref{l:2.2} below.
Moreover, as a consequence of \eqref{Key:Delta} we obtain by interchanging the roles of $(s,r)$ and $(S,R)$ an analogous inequality
\begin{align*}
\nabla F_{0}(S,R) \cdot ( S - s,  R - r)^{\top} \ge c_{3} | \nabla F_{0}(S,R)| - c_{4}.
\end{align*}
Then, a short computation shows that
\begin{equation}\label{F0:CKRS:cond}
\begin{aligned}
& \big ( \nabla F_{0} (s,r) - \nabla F_{0}(S,R) \big ) \cdot (s - S, r - R)^{\top} \\
& \quad = \nabla F_{0}(s,r) \cdot (s - S, r - R)^{\top} + \nabla F_{0}(S,R) \cdot (S - s, R - r)^{\top} \\
& \quad \geq c_{3} | \nabla F_0(s,r)| + c_{3} | \nabla F_0(S,R)| - 2 c_4 \\
& \quad \geq c_3 | \nabla F_0(s,r) - \nabla F_0(S,R)| - 2 c_4.
\end{aligned}
\end{equation}
In particular, the inequality \eqref{F0:CKRS:cond} together with \eqref{F0:super:quad} shows that $F_0$ fulfills the hypotheses
of \cite[Prop.~2.10]{CKRS}. This property would be important later when we consider the Yosida approximation of $F_0$ for the derivation of uniform estimates.

Let us also sketch a couple of examples of functions $\Sigma_p, \Sigma_d$ and $\underline{\underline{M}}$
that fulfill the assumption \eqref{struct:2}.
\begin{itemize}
\item First, we consider the source terms
$S_p$, $S_d$ introduced in \eqref{sp}, \eqref{sd},
for positive constants $\lambda_M, \lambda_A, \lambda_L$ and a bounded positive function $g$ such that $0 < g(s) \leq 1$.
For instance, one can take $g(s) = \max(n_{c}, \min(s,1))$ for some constant $n_{c} \in (0,1)$.
Note that with this choice one can take $K_{p,-}=0$, $K_{p,-}=\lambda_M$, $K_{d,-}=K_{d,+}=0$.
In what follows the mean of a function $f$ over $\Omega$ is denoted as $(f)\OO$. Then, integrating \eqref{eq:p} and
\eqref{eq:d} over $\Omega$, and applying the boundary conditions \eqref{bc} leads to the following
ODE system for the mean values $Y_{p} := (\phip)\OO$ and $Y_{d} := (\phid)\OO$:
\begin{align*}
  \frac{d}{dt} \left ( \begin{array}{c}
  Y_{p} \\ Y_{d} \end{array} \right) = \left ( \begin{array}{cc} -\lambda_A & 0 \\ \lambda_A & -\lambda_L
  \end{array} \right ) \left ( \begin{array}{c} Y_p \\ Y_d
  \end{array}
    \right ) + \left ( \begin{array}{c} \lambda_M (g(n))\OO \\ 0 \end{array} \right ).
\end{align*}
The matrix $\underline{\underline{M}}$ is invertible with eigenvalues $\{-\lambda_A, -\lambda_L\}$, hence the fixed point
\begin{align*}
\left ( \begin{array}{c} Y_{p}^{*} \\ Y_{d}^{*} \end{array} \right ) = - \underline{\underline{M}}^{-1} \left ( \begin{array}{c} \lambda_M (g(n))\OO \\ 0 \end{array} \right )
  = \left ( \begin{array}{c} \frac{\lambda_M}{\lambda_A} (g(n))\OO  \\ \frac{\lambda_M}{\lambda_L} (g(n))\OO \end{array} \right )
\end{align*}
is asymptotically stable.  Under the following constraints on the rates:
\begin{align*}
  \lambda_M (\lambda_A + \lambda_L) < \lambda_A \lambda_L, \quad \lambda_A < 2 \lambda_L,
\end{align*}
we can easily show that $(Y_{p}^{*}, Y_{d}^{*})$ lies in the interior of the simplex $\Delta$, and \eqref{struct:2} holds when we take $\Delta_{0}$
to be a ball centered at $(Y_{p}^{*}, Y_{d}^{*})$ with sufficiently small radius $\eta > 0$.  Indeed, thanks to $n_{c} > 0$ we easily see
that $Y_{p}^{*}, Y_{d}^{*} > 0$, while using $g \leq 1$ shows that
\begin{align*}
Y_{p}^{*} + Y_{d}^{*} \leq \lambda_M \left ( \frac{1}{\lambda_A} + \frac{1}{\lambda_L} \right ) < 1
\end{align*}
when we assume the hypothesis $\lambda_M (\lambda_A + \lambda_L) < \lambda_A \lambda_L$.  Furthermore, taking a parameterization of the
circle $\de \Delta_0$ as $(\eta \cos \theta + Y_{p}^{*}, \eta \sin \theta + Y_{d}^{*})$ for $\theta \in [0,2 \pi]$ with normal $\norma = (\cos \theta, \sin \theta)$,
a short computation shows that
\begin{align*}
& \left [ \underline{\underline{M}} \left ( \begin{array}{c} \eta \cos \theta + Y_{p}^{*} \\ \eta \sin \theta + Y_{d}^{*} \end{array}
\right ) + \left ( \begin{array}{c} \lambda_M (g(n))\OO \\ 0 \end{array} \right ) \right ] \cdot \left ( \begin{array}{c} \cos \theta \\ \sin \theta \end{array} \right ) \\
& \quad = - \lambda_A \eta \cos^{2} \theta + \lambda_A \eta \cos \theta \sin \theta - \lambda_L \eta \sin^{2} \theta \\
& \quad \leq - \frac{\lambda_A}{2} \eta \cos^{2} \theta - \left ( \lambda_L - \frac{\lambda_A}{2} \right ) \eta \sin^{2} \theta \leq - \frac{1}{2} \min ( \lambda_A, 2 \lambda_L - \lambda_A) \eta < 0
\end{align*}
under the assumption $2 \lambda_L > \lambda_A$.
\item As a second model case, for $\lambda>0$ we take $\underline{\underline{M}}$ as $-\lambda$ times the identity matrix
(a more general negative definite diagonal matrix could also be considered) and
\begin{equation}\label{ex:M}
  \Sigma(\phip,\phid,n) = \bm{k} + \Sigma_0(\phip,\phid,n),
\end{equation}
where $\bm{k} = (\lambda/3,\lambda/3)$ and $\Sigma_0$ is a $C^1$ function of its arguments such that
\begin{equation}\label{struct:Sig2}
  \| \Sigma_0 \|_{L^\infty(\RR^3;\RR^2)} \le K
\end{equation}
for some $K>0$. This corresponds in fact to
\begin{equation}\label{ex:M2}
 \left ( \begin{array}{c} S_{p} \\ S_{d} \end{array} \right ) = \underline{\underline{M}}  \left ( \begin{array}{c} \phip - 1/3 \\ \phid - 1/3 \end{array} \right )  + \Sigma_0(\phip,\phid,n)
\end{equation}
and in particular we can take $K_{p,-}=K_{d,-}=\lambda/3-K$
and $K_{p,+}=K_{d,+}=\lambda/3+K$. Note also that the point $(1/3,1/3)$ can be seen as the ``center'' of the simplex (indeed
it represents the configuration where all the species have the same proportion). Hence, here we are decomposing
$(S_p,S_d)$ as the sum of an affine part that tends to keep the configuration close to the center of the simplex,
plus the perturbation $\Sigma_0$.

\bigskip

With this choice, we now check that, at least if $\lambda$ is large enough (depending on $K$), then there exists
$\eps>0$ such that
$$
  Y_i = \eps~\Rightarrow~Y_i'>0,
   \qquad ( 1 - Y_p - Y_d ) = \eps~\Rightarrow~Y_p' + Y_d' < 0.
$$
Indeed, let $a\in[-K,K]$. Then, for $Y_i = \eps$ we have
$$
  - \lambda \Big( Y_i - \frac13 \Big) + a
   = \lambda \Big( \frac13 - \eps \Big) + a
   \ge \lambda \Big( \frac13 - \eps \Big) - K
$$
which, for $\eps<1/3$, is greater than $0$ if $\lambda$ is large enough
compared to $K$. Analogously, for $a,b\in[-K,K]$ and $Y_p+Y_d=1-\eps$,
$$
  - \lambda \Big( Y_p + Y_d - \frac23 \Big) + a + b
   = - \lambda \Big( \frac13 - \eps \Big) + a + b
   \le - \lambda \Big( \frac13 - \eps \Big) + 2 K
$$
which is negative under the same conditions as before. Consequently,
one can take $\Delta_0$ as the set $\{Y_p\ge \eps,~Y_d\ge \eps, Y_p+Y_d \le 1-\eps\}$,
which is larger for $\eps$ closer to 0. Namely, $\Delta_0$ is closer
to $\Delta$ if the constant $\lambda$ is big compared to the
$L^\infty$-norm of the perturbation $\Sigma_0$.
To be more precise, we have to remark that the above choice of $\Delta_0$ is
nonsmooth (and the normal $\norma$ is not defined in the vertices). On the other
hand it is easy to check that taking a smaller $\Delta_0$ whose vertices
are smoothed out the above computations are still effective.
\end{itemize}

\smallskip

We can now define a suitable notion of weak solution to the initial-boundary value
problem for system \eqref{eq:p}-\eqref{eq:n}:
\begin{defn}\label{defn:Soln}
We say that a multiple $(\phip,\mu_{p},\eta_{p},\phid,\mu_{d},\eta_{d},\vu,q,n)$
is a weak solution to the multi-species tumor model \eqref{Model} over the interval $(0,T)$ if
\begin{enumerate}
\item [\bf (1)]~~the following regularity
properties hold:
\begin{subequations}
\begin{align}
\label{reg:fhi}
\varphi_{i} & \in H^{1}(0,T;H^{1}(\Omega)') \cap L^{\infty}(0,T;H^{1}(\Omega)) \cap L^{2}(0,T;H^{2}(\Omega)),\\
\notag & \quad \text{ with } 0 \leq \varphi_{i} \leq 1, \quad \phip + \phid \leq 1 \text{ a.e. in } Q, \\
\label{reg:mu}
\mu_{i} & \in L^{2}(0,T;H^{1}(\Omega)),\\
\label{reg:eta}
\eta_{i} & \in L^{2}(Q),\\
\label{reg:u}
\vu & \in L^{2}(Q) \text{ with } \div \vu \in L^{2}(Q),\\
\label{reg:q}
q & \in L^{2}(0,T;H^{1}_{0}(\Omega)),\\
\label{reg:n}
n & \in ( 1 + L^{2}(0,T;H^{2}(\Omega) \cap H^{1}_{0}(\Omega))) ,\quad 0\leq n \leq 1 \text{ a.e. in } Q,
\end{align}
\end{subequations}
for $i = p,d$.
\item [\bf (2)]~~Equations \eqref{eq:p}-\eqref{eq:n} hold, for a.e.~$t\in (0,T)$
and for $i=p,d$, in the following weak sense:
\begin{subequations}
\begin{align}
& \inner{ \de_{t}\varphi_{i}}{ \zeta}  + \int_{\Omega} M_{i} \nabla \mu_{i} \cdot \nabla \zeta - \varphi_{i} \vu \cdot \nabla \zeta \, dx = \int_{\Omega} S_{i} \zeta \, dx \quad \forall \zeta \in H^{1}(\Omega), \label{eq:fhiw} \\
& \int_{\Omega} \mu_{i} \zeta \, dx = \int_{\Omega} \nabla \varphi_{i} \cdot \nabla \zeta + \eta_{i} \zeta + F_{1,\varphi_i}(\varphi_{p},\varphi_{d})\zeta \, dx \quad \forall \zeta \in H^1(\Omega),   \label{eq:muw} \\
& \int_{\Omega} \vu \cdot \nabla \xi \, dx  = - \int_{\Omega} (S_{p} + S_{d}) \xi \, dx \quad \forall \xi \in H^{1}_{0}(\Omega),   \label{eq:uw} \\
& \int_{\Omega} \vu \cdot \bm{\zeta} \, dx = \int_{\Omega} - \nabla q \cdot \bm{\zeta} - \varphi_{p} \nabla  \mu_{p} \cdot \bm{\zeta} - \varphi_{d} \nabla \mu_{d} \cdot \bm{\zeta} \, dx \quad \forall \bm{\zeta} \in (L^{2}(\Omega))^{d},   \label{eq:qw}\\
& 0 = -\Laplace n + \varphi_{p} n \quad \text{ a.e.~in } \Omega, \label{eq:nw}\\
& \eta_{i} = F_{0,\varphi_{i}}(\varphi_{p},\varphi_{d}) \quad \text{ a.e.~in } \Omega, \label{eq:etaw} \\
& S_{p} = \Sigma_{p}(n, \phip, \phid) + m_{pp} \phip + m_{pd} \phid \quad \text{ a.e.~in } \Omega, \label{eq:Spw}\\
& S_{d} = \Sigma_{d}(n, \phip, \phid) + m_{dp} \phip + m_{dd} \phid \quad \text{ a.e.~in } \Omega. \label{eq:Sdw}
\end{align}
Moreover, there hold the initial conditions
\begin{align}
& \phip(x,0) = \varphi_{p,0}(x), \quad \phid(x,0) = \varphi_{d,0}(x) \quad \text{ a.e.~in } \Omega, \label{eq:initw}
\end{align}
\end{subequations}
where $\inner{\cdot}{\cdot}$ denotes the duality pairing between $H^{1}(\Omega)$ and its dual $H^{1}(\Omega)'$.
%
%
\end{enumerate}
\end{defn}
\noindent%
It is worth noting that now the first two boundary conditions in \eqref{bc} have been incorporated in the weak formulations \eqref{eq:fhiw}, \eqref{eq:muw}.
Moreover, the boundary conditions $q = 0$ and $n = 1$ a.e. on $\Sigma$ are built into the function spaces in \eqref{reg:q} and \eqref{reg:n}.
Furthermore, the attainment of the initial conditions \eqref{eq:initw} is due to the continuous embedding
\begin{align*}
H^{1}(0,T;H^{1}(\Omega)') \cap L^{\infty}(0,T;H^{1}(\Omega)) \subset C^{0}([0,T];L^{2}(\Omega)),
\end{align*}
and thus the initial conditions \eqref{eq:initw} makes sense as equalities in the space $L^{2}(\Omega)$.
Finally, it is worth saying some words about the auxiliary variables $\eta_{p}$, $\eta_{d}$. Actually, using the language of
convex analysis, relations \eqref{eq:etaw} for $i=p,d$ may be equivalently stated by saying that the vector
$\bm{\eta}=(\eta_p,\eta_d)$ belongs at almost every point $(x,t)\in Q$ to the subdifferential
$\partial F_0(\phip,\phid)$ which is a maximal monotone graph in $\RR^2 \times \RR^2$.  In principle such an object may be a multivalued mapping; here, however,
in view of the fact that $F_0$ is assumed to be smooth in $\Delta$ (cf.~(A3)), $\partial F_0$
may be simply identified with the gradient $\nabla F_0$. On the other hand, the use of some
techniques from convex analysis and monotone operators will be required in the last part of the proof.

\bigskip

We are now ready to state the main result of this paper
\begin{thm}\label{thm:main}
Let the hypotheses stated in Assumption~\ref{ass:pd} hold. Then there exists at
least one weak solution $(\phip,\mu_{p},\eta_{p}, \phid,\mu_{d}, \eta_{d}, \vu, q, n)$ to the multi-species
tumor model \eqref{Model} in the sense of Definition \ref{defn:Soln}.
\end{thm}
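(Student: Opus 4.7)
The plan is to prove existence via a regularization-and-compactness scheme. First I would replace the singular convex part $F_0$ by a family $\{F_{0,\eps}\}_{\eps>0}$ of globally defined $C^1$ convex approximations (for instance the Moreau--Yosida regularization), so that $F_{0,\eps}\to F_0$ monotonically and $\nabla F_{0,\eps}$ is globally Lipschitz. To keep certain nonlinearities under control while $\eps>0$, I would also truncate the factors $\phi_i$ appearing in the convective term $\div(\phi_i\vu)$ and in the Korteweg terms in Darcy's law by means of a smooth cutoff supported slightly outside $[0,1]$, and analogously regularize the nutrient equation by replacing $\phi_p n$ with $T_\eps(\phi_p)n$. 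For this regularized problem existence of a sufficiently smooth solution $(\phi_{p,\eps},\mu_{p,\eps},\phi_{d,\eps},\mu_{d,\eps},\vu_\eps,q_\eps,n_\eps)$ would be established by either of the two alternative routes the authors advertise (Schauder fixed point on a Bochner space built from the truncated data, or a Faedo--Galerkin scheme in the $\phi_i$--variables with the Darcy--pressure problem solved in closed form at each Galerkin step).

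Next I would derive estimates uniform in $\eps$. The basic energy estimate comes from testing \eqref{eq:p}--\eqref{eq:d} against $\mu_{i,\eps}$ and the second relations against $\partial_t\phi_{i,\eps}$, then testing Darcy's law against $\vu_\eps$: the Korteweg terms cancel with the convective terms precisely because of the first boundary condition in \eqref{bc}, and one is left with bounds on $\|\nabla\phi_{i,\eps}\|_{L^\infty_tL^2_x}$, $\|\nabla\mu_{i,\eps}\|_{L^2_{t,x}}$, $\|\vu_\eps\|_{L^2}$, $\|\nabla q_\eps\|_{L^2}$, together with $\int_\Omega F_{0,\eps}(\phi_{p,\eps},\phi_{d,\eps})\,dx$ in $L^\infty_t$; the source terms are harmless thanks to \eqref{struct:Sig}. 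For the nutrient, elliptic regularity and the weak maximum principle applied to \eqref{eq:n} with boundary datum $1$ give $0\le n_\eps\le 1$ and $L^2_tH^2_x$ bounds. The absent ingredient is a bound on $\mu_{i,\eps}$ itself (not only its gradient); this is recovered through a control of the spatial mean of $\mu_{i,\eps}$, obtained via \eqref{Key:Delta} tested against $\phi_{i,\eps}-\overline{\phi_{i,\eps}}$ in the spirit of \cite[Prop.~2.10]{CKRS}, provided that the means $Y_{i,\eps}(t):=(\phi_{i,\eps}(t))_\Omega$ stay well inside $\Delta_0$.

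The mean-value control is the heart of the matter and the step I expect to be the main obstacle. Integrating \eqref{eq:fhiw} with $\zeta\equiv 1$ kills the Cahn--Hilliard and convective contributions (again by virtue of the first boundary condition in \eqref{bc}) and produces the ODE
\begin{equation*}
\frac{d}{dt}\begin{pmatrix}Y_{p,\eps}\\ Y_{d,\eps}\end{pmatrix}
=\underline{\underline{M}}\begin{pmatrix}Y_{p,\eps}\\ Y_{d,\eps}\end{pmatrix}
+\begin{pmatrix}(\Sigma_p(n_\eps,\phi_{p,\eps},\phi_{d,\eps}))_\Omega\\ (\Sigma_d(n_\eps,\phi_{p,\eps},\phi_{d,\eps}))_\Omega\end{pmatrix}.
\end{equation*}
Assumption \eqref{struct:2} says that on $\partial\Delta_0$ this vector field points strictly inward for any admissible value of the second addendum; combined with the initial condition \eqref{assump:initial<1}, a standard invariance argument then yields $(Y_{p,\eps}(t),Y_{d,\eps}(t))\in\Delta_0$ for all $t\in[0,T]$, uniformly in $\eps$. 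This invariance, fed back into \eqref{Key:Delta} with $(S,R)=(Y_{p,\eps},Y_{d,\eps})$, furnishes an $L^1_{t,x}$ bound on $\nabla F_{0,\eps}(\phi_{p,\eps},\phi_{d,\eps})$, hence on $\mu_{i,\eps}$ in $L^2_tH^1_x$.

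Finally, I would pass to the limit $\eps\searrow 0$. The uniform estimates and Aubin--Lions give strong convergence of $\phi_{i,\eps}$ in $L^2(0,T;H^1(\Omega))$ and a.e.\ in $Q$, which identifies the limits of the cutoff terms with $\phi_i\vu$ and $\phi_p n$ and pushes $\phi_i$ into $\overline{\Delta}$ pointwise. Weak convergence handles $\mu_i$, $\vu$, $q$ and $\nabla\phi_i$ in $L^2$-based spaces. The limit identification $\eta_i=F_{0,\varphi_i}(\phi_p,\phi_d)$ follows from maximal-monotone-operator arguments: the $L^1$ bound on $\nabla F_{0,\eps}$ together with the lower semicontinuity of the convex functional $\int_\Omega F_{0,\eps}$ and the standard $\limsup$ inequality $\limsup_\eps\int_Q\bm{\eta}_\eps\cdot\bm{\varphi}_\eps\le\int_Q\bm{\eta}\cdot\bm{\varphi}$, combined with the Minty trick, deliver $(\eta_p,\eta_d)\in\partial F_0(\phi_p,\phi_d)=\nabla F_0(\phi_p,\phi_d)$ a.e. The boundary/initial conditions survive the limit thanks to the incorporation of \eqref{bc} into the weak formulation and to the continuous embedding $H^1(0,T;H^1(\Omega)')\cap L^\infty(0,T;H^1(\Omega))\hookrightarrow C^0([0,T];L^2(\Omega))$, completing the proof.
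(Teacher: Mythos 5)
Your proposal is essentially the same strategy as the paper: Moreau--Yosida regularization of $F_0$ plus truncation of the transport/Korteweg coefficients, a two-pronged existence proof for the regularized system (Schauder or Faedo--Galerkin), the ODE-invariance argument keeping $((\phip)_\Omega,(\phid)_\Omega)$ inside $\Delta_0$, the CKRS-type inequality to control the mean of $\mu_i$, and a maximal-monotone limit to identify $\eta_i$. The only detail you gloss over is that the paper's Schauder route needs an extra $\delta$-regularization (viscous terms $-\delta\Delta\de_t\varphi_i$ and a parabolic/bi-Laplacian regularization of the pressure equation) to make the auxiliary problems well posed before sending $\delta\to0$, whereas the Galerkin route works directly on the $\eps$-level system; your outline is nonetheless faithful to the paper's argument.
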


Before we prove our main result, we show that the convex part $F_{0}$ of the
model potential \eqref{log:pot} satisfies the assumption \eqref{Key:Delta}.

\begin{lemma}\label{l:2.2}
Let $F_{0}$ be defined as
\begin{align*}
F_0(s,r) = s\log s + r \log r + (1-s-r) \log (1-s-r)
\end{align*}
and let $\Delta_{0}$ be a compact subset of $\Delta$.  Then there exist positive constants $c_{*}, C_{*}$ depending only on $\Delta_{0}$ such that \eqref{Key:Delta} holds.
\end{lemma}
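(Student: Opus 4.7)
The plan is to establish \eqref{Key:Delta} through a direct computation that exploits the full symmetry between the three coordinates $s$, $r$ and $w := 1-s-r$. Since $\Delta_0$ is compactly contained in the open simplex $\Delta$, there exists $\delta>0$ such that $\min\{S, R, 1-S-R\} \geq \delta$ for every $(S,R)\in\Delta_0$; all constants below will depend only on $\delta$. Setting $W := 1-S-R$ and noting that $(s-S)+(r-R) = W-w$, the formulae $\partial_s F_0 = \log s - \log w$ and $\partial_r F_0 = \log r - \log w$ give the key reformulation
\begin{equation*}
\nabla F_0(s,r) \cdot (s-S, r-R)^\top = (\log s)(s-S) + (\log r)(r-R) + (\log w)(w-W).
\end{equation*}

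I would then split each factor as, for instance, $(\log s)(s-S) = s\log s - S\log s$, and invoke the elementary bound $t\log t \geq -1/e$ for $t\in(0,1]$ to discard the three ``diagonal'' contributions; this yields
\begin{equation*}
\nabla F_0(s,r) \cdot (s-S, r-R)^\top \;\geq\; -\tfrac{3}{e} - S\log s - R\log r - W\log w.
\end{equation*}
Since $s, r, w \in (0,1]$, all three logarithms are nonpositive, so the lower bounds $S, R, W \geq \delta$ give $-S\log s - R\log r - W\log w \geq \delta(-\log s - \log r - \log w)$, which is itself nonnegative.

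It remains to compare the quantity $-\log s - \log r - \log w$ with $|\nabla F_0(s,r)|$. By the triangle inequality and the nonpositivity of the three logarithms,
\begin{equation*}
|\partial_s F_0| + |\partial_r F_0| = |\log s - \log w| + |\log r - \log w| \;\leq\; -\log s - \log r - 2\log w,
\end{equation*}
and since $-\log s - \log r \geq 0$ this is in turn dominated by $2(-\log s - \log r - \log w)$. Combining these observations with $|\nabla F_0| \leq |\partial_s F_0| + |\partial_r F_0|$ and the estimate of the previous paragraph delivers \eqref{Key:Delta} with the explicit constants $c_\ast = \delta/2$ and $C_\ast = 3/e$. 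I do not anticipate a serious obstacle: the only non-mechanical step is spotting the symmetric three-term identity above, after which the rest of the argument is routine.
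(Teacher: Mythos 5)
Your proof is correct and follows essentially the same route as the paper's: both exploit the symmetric rewriting $\nabla F_0\cdot(s-S,r-R)=(\log s)(s-S)+(\log r)(r-R)+(\log w)(w-W)$, bound each term below using $t\log t\geq-1/e$, and then compare $-\log s-\log r-\log w$ with $|\nabla F_0|$ via the triangle inequality. Your bookkeeping is a bit cleaner (explicit constants $\delta/2$ and $3/e$, with a single application of $t\log t\geq-1/e$ instead of three separate intermediate constants $d_1,d_2,d_3$), but the underlying argument is the same.
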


\begin{proof}
For any $(S,R) \in \Delta_{0}$, computing the gradient of $F_{0}$ leads to
\begin{align*}
\nabla F_{0}(s,r) \cdot (s-S, r-R)^{\top} & = (s - S) \log s + (r-R) \log r \\
& \quad + ((R+S) - (r+s)) \log (1 - (r+s)).
\end{align*}
For $s, S \in (0,1)$ it holds that
\begin{align*}
(s-S) \log s = \begin{cases}
> 0 & \text{ if } s < S, \\
< 0 & \text{ if } s > S, \\
= 0 & \text{ if } s = S,
\end{cases} \quad \text{ and } \quad (s - S) \log s \to \begin{cases} 0 & \text{ as } s \to 1, \\
\infty & \text{ as } s \to 0.
\end{cases}
\end{align*}
In particular, the function $(s-S) \log(s)$ is bounded from below by some negative constant.  Hence, there exists a constant $d_{1} \geq 0$ such that
\begin{align*}
(s - S) \log s \geq \frac{S}{2} | \log s | - d_{1},
\end{align*}
and it is clear that as $S \in (0,1)$ we can choose $d_{1}$ independent of $S$.  In a similar fashion, there exists a constant $d_{2} \geq 0$ (that can
be chosen independent of $R$) such that
\begin{align*}
(r - R) \log r \geq \frac{R}{2} | \log r | - d_{2}.
\end{align*}
Lastly, as $(s,r) \in \Delta$ we have $r + s \in (0,1)$ and consequently there exists a constant $d_{3} \geq 0$ independent of $R,S$ such that
\begin{align*}
((R + S) - (r+s)) \log (1 - (r+s)) \geq \frac{1 - (R+S)}{2} | \log (1 - (r+s))| - d_{3}.
\end{align*}
Summing the above then yields
\begin{align*}
& \nabla F_{0}(s,r) \cdot (s - S, r - R)^{\top} \\
& \quad \ge \frac{1}{2} \min (R, S, 1-(R+S))  \big ( | \log r | + | \log s | + | \log (1 - (r+s)) | \big ) - C(d_{1}, d_{2}, d_{3}) \\
& \quad \ge \frac{1}{4} \min (R, S, 1-(R+S)) | \nabla F_{0}(s,r) | - C(d_{1}, d_{2}, d_{3}).
\end{align*}
Now for $(R,S) \in \Delta_{0}$, we see that
\begin{align*}
\min (R, S, 1-(R+S)) \ge c_{*} > 0
\end{align*}
for some constant $c_{*}>0$ depending only on $\Delta_{0}$. This concludes the proof of Lemma~\ref{l:2.2}.
\end{proof}


\section{Approximation scheme}\label{sec:scheme}

In order to start our existence proof, we introduce a regularized version of our model. First of all,
for $\eps \in (0,1)$ intended to go to 0 in the limit, we consider a convex function
\begin{equation}\label{Fepsilon}
F_{\eps} : \RR^2 \to [0,+\infty)
\end{equation}
such that, for any $\eps \in (0,1)$, $\nabla F_{\eps}$ is globally Lipschitz continuous. Moreover, we assume that $F_{\eps}$ converges
in a suitable sense to $F_{0}$ as $\eps\searrow 0$.
Various choices are possible for $F_{\eps}$, but in light of the analysis below, we take $F_{\eps}$ as the
Moreau--Yosida approximation of $F_0$ (cf.~\cite{Brezis}), which is defined as
\begin{align*}
F_{\eps}(s,r) := \min_{(p,q) \in \RR^{2}} \left ( \frac{1}{2 \eps} | (p - s, q - r)|^{2} + F_0(p,q) \right ) \quad \text{ for } \eps \in (0,1).
\end{align*}
It is well-known that $F_\eps$ is convex and differentiable with derivative $\nabla F_\eps$ that is globally Lipschitz continuous with Lipschitz
constant scaling with $\frac{1}{\eps}$.  More importantly, thanks to the fact that $F_0$ satisfies \eqref{F0:CKRS:cond}, it turns out that $F_0$
fulfills the hypothesis of \cite[Prop.~2.10]{CKRS}, whence, by \cite[Prop.~2.13]{CKRS}, there exist positive constants $c_{*}, C_{*}$ such that
\begin{align}\label{Feps:CKRS}
c_{*} | \nabla F_\eps (s,r) - \nabla F_\eps(S,R)| \leq ( \nabla F_\eps (s,r) - \nabla F_\eps (S,R)) \cdot (s - S, r - R)^{\top} + C_{*}
\end{align}
for all $(s,r) \neq (S,R) \in \RR^{2}$.  In particular an analogue of \eqref{F0:CKRS:cond} also holds for $F_\eps$ with constants $c_*, C_*$ independent of $\eps \in (0,1)$.

\smallskip

A difficulty concerned with the regularization of $F_{0}$ stands in the fact that $F_{\eps}$ is no longer a singular function;
consequently, the uniform boundedness
properties  $0 \leq \phip$, $0 \le \phid$, $\phip + \phid \leq 1$
are not expected to hold in the approximation. For this reason, in order that the a priori estimates still
work, some terms have to be truncated in the regularized system.
In addition to that, we also include a number of regularizing terms depending
by a further parameter $\delta > 0$ which is intended to go to $0$ in the limit. Finally, we remove the explicit dependence on $\vu$ in the equations
and rewrite the transport terms in \eqref{eq:p}, \eqref{eq:d} directly in terms of the pressure $q$.
Hence, introducing the cutoff operator
\begin{equation*}
  T(r):= \max\big\{0,\min\{1,r\}\big\},
\end{equation*}
our regularized system takes the form
\begin{subequations}\label{delta:eps:Reg}
\begin{alignat}{3}
\label{eq:pa1}
 \de_{t} \phip &= M_{p} \Laplace \mu_{p} + \div (T(\phip) \nabla q) + \div \big( T(\phip)^2 \nabla \mu_p + T(\phip) T(\phid) \nabla \mu_d \big) + S_{p},  \\
\label{eq:pa2}
 \mu_{p} & = - \delta \Delta \de_t \phip + F_{\eps,p} (\phip, \phid) + F_{1,p} (\phip, \phid) - \Laplace \phip,\\
\label{eq:da1}
 \de_{t} \phid & = M_{d} \Laplace \mu_{d} + \div (T(\phid) \nabla q) + \div \big( T(\phip) T(\phid) \nabla \mu_p + T(\phid)^2 \nabla \mu_d \big) + S_{d},  \\
\label{eq:da2}
\mu_{d} & = - \delta \Delta \de_t \phid + F_{\eps,d} (\phip, \phid) + F_{1,d} (\phip, \phid) - \Laplace \varphi_{d},\\
\label{eq:Spa}
 S_{p} & = \Sigma_p(n, \phip ,\phid) + m_{pp} \phip + m_{pd} \phid, \\
\label{eq:Sda}
 S_{d} & = \Sigma_d(n,\phip, \phid) + m_{dp} \phip + m_{dd} \phid, \\
\label{eq:qa}
 \delta \de_t q & = \Delta q - \delta \Delta^{2} q + \div \big( T(\phip) \nabla  \mu_{p} + T(\phid) \nabla \mu_{d} \big) + S_p + S_d,\\
\label{eq:na}
 0 & = -\Laplace n + T(\phip) n,
\end{alignat}
\end{subequations}
furnished with the initial and boundary conditions
\begin{subequations}
\begin{alignat}{3}
\label{add:init}  \phip(0) = \varphi_{p,0,\delta}, \quad \phid(0) = \varphi_{d,0,\delta}, \quad q(0) & = 0 \text{ in } \Omega, \\
\label{add:coup} M_{i} \pdnu \mu_{i} + T(\varphi_{i}) (\nabla q + T(\phip) \nabla \mu_{p} + T(\phid) \nabla \mu_{d}) \cdot \bm{n} & = 0 \text{ on } \Gamma, \\
\label{add:dir} \pdnu \varphi_{i} = 0, \quad n = 1, \quad q = 0, \quad \Delta q & = 0 \text{ on } \Gamma,
\end{alignat}
\end{subequations}
where for each $\delta > 0$, $i = p,d$, the initial data $\varphi_{i,0,\delta} \in H^{2}_{\bm{n}}(\Omega)$ is defined as the solution
$f_i$ to
\begin{align}\label{Initial:data:Neu}
- \delta \Delta f_{i} + f_{i} = \varphi_{i,0} \text{ in } \Omega, \quad \pdnu f_{i} = 0 \text{ on } \Gamma.
\end{align}
We have used here the notation $H^2_{\bm{n}}(\Omega)$ for the space of $H^2(\Omega)$-functions satisfying
homogeneous Neumann boundary condition on $\Gamma$.
Then, it is well-known that, for each $\delta \in (0,1]$, $f_{i} \in H^{2}_{n}(\Omega)$. More precisely,
testing \eqref{Initial:data:Neu} by $f_{i}$ and $-\Delta f_{i}$, respectively, one obtains
\begin{equation}\label{ic:delta:Unif:H1}
\begin{aligned}
 2 \delta \| \nabla f_{i} \|_{L^{2}(\Omega)}^{2} + \| f_{i} \|_{L^{2}(\Omega)}^2 & \leq \| \varphi_{i,0} \|_{L^{2}(\Omega)}^2, \\
 2 \delta \| \Delta f_{i} \|_{L^{2}(\Omega)}^2 + \| \nabla f_{i} \|_{L^{2}(\Omega)}^2 & \leq \| \nabla \varphi_{i,0} \|_{L^{2}(\Omega)}^2.
\end{aligned}
\end{equation}
Furthermore, elliptic regularity arguments yield the additional estimate
\begin{align}\label{initialcond:delta}
\| f_{i} \|_{H^{2}(\Omega)} \leq C \big ( \| \Delta f_{i} \|_{L^{2}(\Omega)} + \| f \|_{L^{2}(\Omega)} \big ) \leq C \big ( 1 + \delta^{-\frac{1}{2}} \big ) \| \varphi_{i,0} \|_{H^{1}(\Omega)}.
\end{align}


\subsection{Auxiliary Cahn--Hilliard problem}

Fix now $\bar{q} \in L^2(0,T;H^1(\Omega))$ and $\bar{n} \in L^2(Q)$ with $0 \leq \bar{n} \leq 1$ almost everywhere in $Q$.  Then, we first consider
\begin{subequations}\label{Aux:CH}
\begin{alignat}{3}
\label{eq:pa12}  \de_{t} \phip & = M_{p} \Laplace \mu_{p} + \div (T(\phip) \nabla \bar{q}) + \div \big( T(\phip)^2 \nabla \mu_p + T(\phip) T(\phid) \nabla \mu_d ) + S_{p},  \\
\label{eq:pa22}
 \mu_{p} & = - \delta \Delta \de_t \phip + F_{\eps,p} (\phip, \phid) + F_{1,p} (\phip, \phid) - \Laplace \phip,\\
\label{eq:da12}
\de_{t} \phid & = M_{d} \Laplace \mu_{d} + \div (T(\phid) \nabla \bar{q}) + \div \big( T(\phip) T(\phid) \nabla \mu_p + T(\phid)^2 \nabla \mu_d ) + S_{d},  \\
\label{eq:da22}
\mu_{d} & = - \delta \Delta \de_t \phid + F_{\eps,d} (\phip, \phid) + F_{1,d} (\phip, \phid) - \Laplace \phid,\\
\label{eq:Spa2}
S_{p} & = \Sigma_p(\bar{n},\phip, \phid) + m_{pp} \phip + m_{pd} \phid, \\
\label{eq:Sda2}
S_{d} & = \Sigma_d(\bar{n},\phip, \phid) + m_{dp} \phip + m_{dd} \phid,
\end{alignat}
\end{subequations}
complemented with the initial and boundary conditions \eqref{add:init}-\eqref{add:dir}.  The above is a Cahn--Hilliard system with source term.
Note that $\bar{q}$ and $\bar{n}$ are given.  Existence of a solution can be proved
for instance via a Galerkin approximation, and we will only derive the necessary a priori estimates.

\begin{lemma}\label{lem:CH}
For each $\eps \in (0,1)$, $\delta \in (0,1)$, suppose \eqref{struct:Sig} holds, and $F_{\eps} : \RR^{2} \to [0,+\infty)$ and $F_{1}: \RR^{2} \to \RR$ are given such that $\nabla F_{\eps}$, $\nabla F_{1}$ are globally Lipschitz continuous.  Then, for given $\bar{q} \in L^{2}(0,T;H^{1}(\Omega))$ and $\bar{n} \in L^{2}(Q)$, there exists a \emph{unique} weak solution $(\phip, \mu_p, \phid, \mu_d)$ to \eqref{Aux:CH} in the following sense:
\begin{enumerate}
\item[\bf(1)] the functions have the following regularity properties:
\begin{align*}
\varphi_{i} \in H^{1}(0,T;H^{2}(\Omega)), \quad \mu_{i} \in L^{2}(0,T;H^{1}(\Omega)),
\end{align*}
with
\begin{align*}
\varphi_{i}(0) = \varphi_{0,i,\delta} \text{ in } \Omega.
\end{align*}
\item[\bf(2)] Equations \eqref{eq:pa22}, \eqref{eq:da22}, \eqref{eq:Spa2} and \eqref{eq:Sda2} hold a.e. in $Q$, and equations \eqref{eq:pa12} and \eqref{eq:da12} hold for a.e. $t \in (0,T)$ in the following weak sense:
\begin{align*}
0 & = \int_{\Omega} (\de_{t} \varphi_{p} - S_{p})\zeta + \big ( M_{p} \nabla \mu_{p} + T(\varphi_{p}) \nabla \bar{q} + T(\varphi_{p})^{2} \nabla \mu_{p} + T(\varphi_{p}) T(\varphi_{d}) \nabla \mu_{d} \big ) \cdot \nabla \zeta \, dx, \\
0 & = \int_{\Omega} (\de_{t} \varphi_{d} - S_{d})\zeta + \big ( M_{d} \nabla \mu_{d} + T(\varphi_{d}) \nabla \bar{q} + T(\varphi_{d})^{2} \nabla \mu_{d} + T(\varphi_{p}) T(\varphi_{d}) \nabla \mu_{p} \big ) \cdot \nabla \zeta \, dx
\end{align*}
for all $\zeta \in H^{1}(\Omega)$.
\end{enumerate}
\end{lemma}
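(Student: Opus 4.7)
The plan is to construct the solution by a Faedo--Galerkin scheme based on the $L^2$-orthonormal eigenbasis $\{w_k\}_{k\ge 1}$ of the Neumann Laplacian on $\Omega$. Setting $V_m := \mathrm{span}\{w_1,\dots,w_m\}$, I would seek $\varphi_{i,m},\mu_{i,m}\in V_m$ solving the Galerkin projection of \eqref{Aux:CH} with approximate initial data $P_m\varphi_{i,0,\delta}$. Substituting the Galerkin version of \eqref{eq:pa22}--\eqref{eq:da22} into \eqref{eq:pa12}--\eqref{eq:da12} reduces the problem to a coupled ODE system in the coefficients of $\varphi_{i,m}$; the regularization $-\delta\Delta\partial_t\varphi_i$ makes the resulting mass matrix (essentially $I$ plus $\delta$ times a positive operator) strictly positive definite, hence invertible, and the system is in normal form. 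Local existence then follows from Cauchy--Lipschitz, since $\nabla F_\eps$, $\nabla F_1$, $T$ and $\Sigma$ are all globally Lipschitz (the latter by \eqref{struct:Sig}); the a priori bounds derived below extend the solution to the whole interval $[0,T]$.

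The core a priori estimate is obtained by testing \eqref{eq:pa22} with $\partial_t\varphi_{p,m}$, \eqref{eq:pa12} with $\mu_{p,m}$, and analogously for the $d$-components, then summing all four identities. The $\int\mu_i\,\partial_t\varphi_i$ pairings cancel and the four cross-coupled divergence contributions combine into the nonnegative quantity $\|T(\varphi_p)\nabla\mu_p+T(\varphi_d)\nabla\mu_d\|_{L^2}^2$, producing
\begin{align*}
&\frac{d}{dt}\int_\Omega\Bigl(F_\eps+F_1+\tfrac12|\nabla\varphi_p|^2+\tfrac12|\nabla\varphi_d|^2\Bigr)dx\\
&\quad+\sum_{i=p,d}\Bigl(\delta\|\nabla\partial_t\varphi_i\|_{L^2}^2+M_i\|\nabla\mu_i\|_{L^2}^2\Bigr)+\|T(\varphi_p)\nabla\mu_p+T(\varphi_d)\nabla\mu_d\|_{L^2}^2\\
&=-\sum_{i=p,d}\int_\Omega T(\varphi_i)\nabla\bar q\cdot\nabla\mu_i\,dx+\sum_{i=p,d}\int_\Omega S_i\mu_i\,dx.
\end{align*}
The $\bar q$-terms are absorbed via $|T|\le 1$ and Young; the source-times-$\mu_i$ contributions require an $L^2$-bound on $\mu_i$, which I obtain by testing \eqref{eq:pa22} with the constant $1$ (using that $\int\Delta\varphi_i=\int\Delta\partial_t\varphi_i=0$ thanks to the Neumann condition built into the Galerkin basis) to recover the mean $\bar\mu_i=|\Omega|^{-1}\int(F_{\eps,i}+F_{1,i})\,dx$, and then invoking Poincar\'e--Wirtinger. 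A Gr\"onwall argument delivers uniform-in-$m$ bounds $\varphi_{i,m}\in L^\infty(0,T;H^1)\cap H^1(0,T;H^1)$ and $\mu_{i,m}\in L^2(0,T;H^1)$.

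For the claimed $H^1(0,T;H^2)$-regularity I introduce the auxiliary field $\Psi_{i,m}:=\varphi_{i,m}+\delta\,\partial_t\varphi_{i,m}$, which by \eqref{eq:pa22}--\eqref{eq:da22} solves the elliptic Neumann problem
\[
-\Delta\Psi_{i,m}=\mu_{i,m}-F_{\eps,i}(\varphi_{p,m},\varphi_{d,m})-F_{1,i}(\varphi_{p,m},\varphi_{d,m}),\qquad\partial_{\bm n}\Psi_{i,m}=0\ \text{on }\Gamma.
\]
Elliptic regularity applied fiberwise in $t$ gives $\Psi_{i,m}\in L^2(0,T;H^2)$ uniformly in $m$; inverting the first-order ODE $\varphi_{i,m}+\delta\,\partial_t\varphi_{i,m}=\Psi_{i,m}$ with the smooth initial datum $\varphi_{i,0,\delta}\in H^2_{\bm n}(\Omega)$ then yields a uniform $H^1(0,T;H^2)$-bound. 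Aubin--Lions compactness at this stage furnishes strong convergence $\varphi_{i,m}\to\varphi_i$ in $L^2(0,T;H^1)$ and a.e.\ in $Q$ along a subsequence, which---combined with the boundedness and Lipschitz continuity of $T$, $\nabla F_\eps$, $\nabla F_1$ and $\Sigma$---suffices to pass to the limit in every nonlinearity, including mixed terms of the form $T(\varphi_p)T(\varphi_d)\nabla\mu_d$ where strong convergence of the truncations combines with weak $L^2$-convergence of $\nabla\mu_d$.

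Uniqueness is obtained by subtracting two solutions with the same data and testing the difference of the $\mu_i$-equations with $\partial_t(\varphi_i^{(1)}-\varphi_i^{(2)})$ and the difference of the $\varphi_i$-equations with $\mu_i^{(1)}-\mu_i^{(2)}$: convexity of $F_\eps$ gives a nonnegative contribution, the Lipschitz constants of $\nabla F_1$, $T$ and $\Sigma$ absorb the remaining cross terms, and the coercive $\delta\|\nabla\partial_t(\varphi^{(1)}-\varphi^{(2)})\|^2$-contribution closes a Gr\"onwall estimate. The step I anticipate as the main technical obstacle is precisely the control of the $\int S_i\mu_i$-term in the energy identity: since the weak formulation uses test functions in $H^1(\Omega)$ rather than its zero-mean subspace, an $L^2$-bound on $\mu_i$---not merely on $\nabla\mu_i$---is needed, and this delicate point is resolved exactly by the mean-value identity mentioned above combined with Poincar\'e--Wirtinger.
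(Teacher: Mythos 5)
Your overall scheme (Faedo--Galerkin, energy estimate from the $(\mu_i,\partial_t\varphi_i)$ test pair, mean-value identity for $(\mu_i)_\Omega$, Aubin--Lions) matches the paper's intent, and your route to the $H^1(0,T;H^2)$ regularity via the auxiliary field $\Psi_i=\varphi_i+\delta\,\partial_t\varphi_i$ is a nice alternative to the paper's direct $-\Delta\partial_t\varphi_i$ test. However, there are two genuine gaps.

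\textbf{Coercivity of the energy.} Your Gr\"onwall argument does not close as written. The assumption on $F_1$ is only that $|\nabla F_1(s,r)|\le C(1+|s|+|r|)$, so $F_1$ may decay to $-\infty$ quadratically; in fact the model potential in \eqref{log:pot} does exactly this. And $F_\eps$ is merely nonnegative (coercivity of $F_0$ is in general not inherited uniformly by its Yosida regularization). Consequently the quantity $\int_\Omega\bigl(F_\eps+F_1+\tfrac12|\nabla\varphi_p|^2+\tfrac12|\nabla\varphi_d|^2\bigr)\,dx$ that appears under $\frac{d}{dt}$ on your left-hand side need not control $\|\varphi_i\|_{H^1}^2$, while your right-hand side (after the mean-value and Poincar\'e--Wirtinger estimates) contains $C(1+\|\varphi_p\|_{L^2}^2+\|\varphi_d\|_{L^2}^2)$. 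The paper fixes this by additionally testing the transport equations \eqref{eq:pa12}, \eqref{eq:da12} with $\varphi_p$, $\varphi_d$ respectively, adding $K$ times the resulting inequality \eqref{energyx2}, and choosing $K$ large so that the modified energy $E_\eps+\frac{K}{2}(\|\varphi_p\|_{L^2}^2+\|\varphi_d\|_{L^2}^2)$ is coercive (see \eqref{coerc}). Without this step the Gr\"onwall lemma cannot be applied.

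\textbf{Uniqueness.} Your choice of test functions ($\hat\mu_i$ on the transport equations, $\partial_t\hat\varphi_i$ on the $\mu_i$-equations) yields the dissipation terms $\delta\|\nabla\partial_t\hat\varphi_i\|^2$ and $M_i\|\nabla\hat\mu_i\|^2$ but no $\|\Delta\hat\varphi_i\|^2$-dissipation. This is insufficient: the difference of the transport terms produces contributions such as $\int_\Omega \widehat{(T_p)^2}\,\nabla\mu_{p,1}\cdot\nabla\hat\mu_p\,dx$ and $\int_\Omega\hat T_p\,\nabla\bar q\cdot\nabla\hat\mu_p\,dx$, and since $\nabla\mu_{p,1},\nabla\bar q$ are only in $L^2(\Omega)$, one must measure $\hat T_p$ (hence $\hat\varphi_p$) in $L^\infty(\Omega)$, which in three dimensions requires $H^2(\Omega)$-control. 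The only way to close the Gr\"onwall loop is to generate $\|\Delta\hat\varphi_i\|_{L^2}^2$ on the dissipation side, which the paper does by testing the difference of \eqref{eq:pa22} with $\partial_t\hat\varphi_p-\Delta\hat\varphi_p$ (rather than $\partial_t\hat\varphi_p$ alone), together with a further $L^2$-test of the transport equations by $\hat\varphi_i$. A side remark: your appeal to ``convexity of $F_\eps$'' is not operative here, since when the test function is $\partial_t\hat\varphi_i$ the quantity $\int_\Omega\hat F_{\eps,i}\,\partial_t\hat\varphi_i\,dx$ has no sign; what is actually used is the global Lipschitz continuity of $\nabla F_\eps$ at fixed $\eps$.
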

\begin{proof}
In the following, the symbol $C$ denotes positive constants that depend only on the given parameters of the problem
and in particular are independent of $\phip$, $\mu_{p}$, $\phid$, $\mu_{d}$.

\paragraph{First estimate.} Testing \eqref{eq:pa12} with $\mu_{p}$, \eqref{eq:pa22} with $\de_{t} \phip$, \eqref{eq:da12} with $\mu_{d}$ and \eqref{eq:da22} with $\de_{t} \phid$, and summing yields
\begin{equation} \label{energyx}
\begin{aligned}
 & \frac{d}{dt} E_{\eps}(\phip, \phid)  + M_p \| \nabla \mu_p \|_{L^{2}(\Omega)}^2 + M_d \| \nabla \mu_d \|_{L^{2}(\Omega)}^2\\
& \qquad  + \delta \| \nabla \de_t \phip \|_{L^{2}(\Omega)}^2 + \delta \| \nabla \de_t \phid \|_{L^{2}(\Omega)}^2 + \int_\Omega | T(\phip) \nabla \mu_p + T(\phid) \nabla \mu_d |^2 \, dx \\
& \quad  = \int_\Omega ( S_p \mu_p + S_d \mu_d ) \, dx - \int_\Omega \big( T(\phip) \nabla \bar{q} \cdot \nabla \mu_p + T(\phid) \nabla \bar{q} \cdot \nabla \mu_d \big) \, dx.
\end{aligned}
\end{equation}
Note that the additional (nonnegative) contribution $\int_\Omega | T(\phid) \nabla \mu_p + T(\phid) \nabla \mu_d |^2 \, dx$ comes from the fact that the equations have been restated in terms of the pressure $q$. Note also that, here, in view of the fact that
the potential is regularized, the approximate energy $E_\eps$ is given by
\begin{align*}
  E_\eps = \frac12 \big( \| \nabla \phip \|_{L^{2}(\Omega)}^2 + \| \nabla \phid \|_{L^{2}(\Omega)}^2 \big) + \io \big( F_{\eps}(\phip, \phid) + F_1(\phip, \phid) \big) \, dx
\end{align*}
and it may be no longer coercive with respect to $\phip, \phid$. For this reason, we need to
add to \eqref{energyx} the product of \eqref{eq:pa12} by $\phip$ and
the product of \eqref{eq:da12} by $\phid$. By standard computations the sum
of these contributions can be written in the form of the following differential
inequality:
\begin{equation} \label{energyx2}
\begin{aligned}
 & \frac12 \frac{d}{dt} \big( \| \phip \|_{L^{2}(\Omega)}^2 + \| \phid \|_{L^{2}(\Omega)}^2 \big)  \\
 & \quad  \le \sigma \big( \| \nabla \mu_p \|_{L^{2}(\Omega)}^2 + \| \nabla \mu_d \|_{L^{2}(\Omega)}^2 \big) + c_\sigma \big( \| \nabla \phip \|_{L^{2}(\Omega)}^2 + \| \nabla \phid \|_{L^{2}(\Omega)}^2 \big) \\
 & \qquad + C \big ( 1 + \| \nabla \bar{q} \|_{L^{2}(\Omega)}^2 + \| \phip \|_{L^{2}(\Omega)}^2 + \| \phid \|_{L^{2}(\Omega)}^2 \big)
\end{aligned}
\end{equation}
for small constant $\sigma>0$ and correspondingly large constant $c_\sigma$.
Hence, the sum of \eqref{energyx} and large $K > 0$ times \eqref{energyx2} gives
\begin{equation} \label{energyx3}
\begin{aligned}
 & \frac{d}{dt} \Big( E_\eps(\phip, \phid) + \frac{K}{2} \big( \| \phip \|_{L^{2}(\Omega)}^2 + \|  \phid \|_{L^{2}(\Omega)}^2 \big) \Big) \\
 & \qquad + (M_p - K\sigma) \| \nabla \mu_p \|_{L^{2}(\Omega)}^2 + (M_d - K\sigma) \| \nabla \mu_d \|_{L^{2}(\Omega)}^2 \\
 & \qquad  + \delta \| \nabla \de_t \phip \|_{L^{2}(\Omega)}^2
   + \delta \| \nabla \de_t \phid \|_{L^{2}(\Omega)}^2 \\
& \quad \le \int_\Omega ( S_p \mu_p + S_d \mu_d ) \, dx - \big( T(\phip) \nabla \bar{q} \cdot \nabla \mu_p + T(\phid) \nabla \bar{q} \cdot \nabla \mu_d \big) \, dx \\
& \qquad  + K c_\sigma \big( \| \nabla \phip \|_{L^{2}(\Omega)}^2 + \| \nabla \phid \|_{L^{2}(\Omega)}^2 \big) \\
& \qquad + K C \big ( 1 + \| \nabla \bar{q} \|_{L^{2}(\Omega)}^2  + \| \phip \|_{L^{2}(\Omega)}^2 + \| \phid \|_{L^{2}(\Omega)}^2 \big).
\end{aligned}
\end{equation}
Now, we first take $K$ large enough so that the modified energy is coercive, namely
\begin{equation}\label{coerc}
 E_\eps + \frac{K}{2} \big( \| \phip \|_{L^{2}(\Omega)}^2 + \| \phid \|_{L^{2}(\Omega)}^2 \big)
  \ge k \big( \| \phip \|_{H^1(\Omega)}^2 + \| \phid \|_{H^1(\Omega)}^2 \big) - C,
\end{equation}
for some $k >0$ (note that $k$ can be chosen independently of $\eps$; indeed, by \eqref{Fepsilon}, $F_{\eps}$ is non-negative, and the smooth non-convex part $F_{1}$ has at most quadratic growth).
Then, after $K$ is fixed, we also take $\sigma$ so small that $\min \{M_p - K\sigma,M_d - K\sigma\} \ge \kappa$
for some constant $\kappa > 0$.  It remains to control the integral terms on the right-hand side of \eqref{energyx3}.  Observe that
\[
\begin{aligned}
 & \left | \int_\Omega \big( T(\phip) \nabla \bar{q} \cdot \nabla \mu_p + T(\phid) \nabla \bar{q} \cdot \nabla \mu_d \big) \, dx \right | \\
 & \quad\leq \frac{\kappa}{4} \big ( \| \nabla \mu_{p} \|_{L^{2}(\Omega)}^{2} + \| \nabla \mu_{d} \|_{L^{2}(\Omega)}^{2} \big ) + C \| \nabla \bar{q} \|_{L^{2}(\Omega)}^{2},
\end{aligned}
\]
and recalling $(f)\OO$ denotes the mean value of $f$ over $\Omega$, we have
\begin{equation}\label{source:mu}
\begin{aligned}
& \int_{\Omega} \big ( S_p (\mu_p - (\mu_p)\OO) + S_{p} (\mu_p)\OO \big) \, dx \\
& \quad = \int_{\Omega} S_p  (\mu_p - (\mu_p)\OO)  \, dx + (\mu_p)\OO \int_{\Omega} S_p \, dx \\
& \quad \leq \frac{\kappa}{4} \| \nabla \mu_p \|_{L^{2}(\Omega)}^{2} + C \big ( 1 + \| \phip \|_{L^{2}(\Omega)}^{2} + \| \phid \|_{L^{2}(\Omega)}^{2} \big ) \\
& \qquad + C_{\eps} \big ( 1 + \| \phip \|_{L^{2}(\Omega)}^{2} + \| \phid \|_{L^{2}(\Omega)}^{2} \big ),
\end{aligned}
\end{equation}
in view of the Lipschitz regularity of $F_{\eps,p}$ and $F_{1,p}$.  The term $S_{d} \mu_{d}$ is controlled analogously.  Then, collecting the above computations, \eqref{energyx3} becomes
\begin{equation}
\label{energyx4}
\begin{aligned}
 & \frac{d}{dt} \Big( E_\eps(\phip, \phid) + \frac{K}{2} \big( \| \phip \|_{L^{2}(\Omega)}^2 + \|  \phid \|_{L^{2}(\Omega)}^2 \big) \Big) \\
 & \qquad + \frac{\kappa}{2} \big ( \| \nabla \mu_p \|_{L^{2}(\Omega)}^2 + \| \nabla \mu_d \|_{L^{2}(\Omega)}^2  \big )
   + \delta \| \nabla \de_t \phip \|_{L^{2}(\Omega)}^2
   + \delta \| \nabla \de_t \phid \|_{L^{2}(\Omega)}^2 \\
& \quad \leq  C_{\eps} \big( 1 + \| \phip \|_{H^{1}(\Omega)}^2 + \| \phid \|_{H^{1}(\Omega)}^2 \big) + C \| \nabla \bar{q} \|_{L^{2}(\Omega)}^2,
\end{aligned}
\end{equation}
for some positive constants $C$, $C_{\eps}$ that are independent of $\delta$.  By virtue of the coercivity property \eqref{coerc} and  a Gronwall argument then yields
\begin{align}
\label{reg:fp01}
\| \phip \|_{L^{\infty}(0,T;H^1(\Omega))} + \| \phid \|_{L^{\infty}(0,T;H^1(\Omega))} & \le C_{\eps},\\
\label{reg:fp12}
\| \mu_p \|_{L^2(0,T;H^1(\Omega))} + \| \mu_d \|_{L^2(0,T;H^1(\Omega))} & \le C_{\eps}, \\
\label{reg:fp02}
 \| \nabla \de_{t} \phip \|_{L^{2}(0,T;L^{2}(\Omega))} + \| \nabla \de_{t} \phid \|_{L^{2}(0,T;L^{2}(\Omega))} & \le C_{\eps, \delta}.
\end{align}

\paragraph{Second estimate.}
Testing \eqref{eq:pa22} with $-\Delta \de_{t} \phip$ and \eqref{eq:da22} with $-\Delta \de_{t} \phid$ and using that $\Delta \varphi_{i,0,\delta} \in L^{2}(\Omega)$, we obtain
\begin{align*}
\| \Delta \phip \|_{H^{1}(0,T;L^{2}(\Omega))} + \| \Delta \phid \|_{H^{1}(0,T;L^{2}(\Omega))} \leq C_{\eps,\delta},
\end{align*}
and by elliptic regularity we get the following additional estimate:
\begin{align}
\label{reg:fp11}
   \| \phip \|_{H^1(0,T;H^2(\Omega))} + \| \phid \|_{H^1(0,T;H^2(\Omega))} \le C_{\eps, \delta}.
\end{align}

\paragraph{Uniqueness.} Let us denote by $\hat{\varphi}_{p}$, $\hat{\varphi}_{d}$, $\hat{\mu}_{p}$ and $\hat{\mu}_{d}$ as the differences $\varphi_{p,1} - \varphi_{p,2}$, $\varphi_{d,1} - \varphi_{d,2}$, $\mu_{p,1} - \mu_{p,2}$ and $\mu_{d,1} - \mu_{d,2}$, respectively.  Then, upon testing the difference of the equations \eqref{eq:pa12} by $\hat{\mu}_{p}$ and the difference of the equations \eqref{eq:pa22} by $\de_{t} \hat{\varphi}_{p} - \Delta \hat{\varphi}_{p}$ leads to
\begin{equation}\label{Uniq:1}
\begin{aligned}
& \frac{d}{dt} \frac{1}{2} \big ( \| \nabla \hat{\varphi}_{p} \|_{L^{2}(\Omega)}^{2} + \delta \| \Delta \hat{\varphi}_{p} \|_{L^{2}(\Omega)}^{2} \big )  \\
& \qquad + \| \Laplace \hat{\varphi}_{p} \|_{L^{2}(\Omega)}^{2} + \delta \| \nabla \de_{t} \hat{\varphi}_{p} \|_{L^{2}(\Omega)}^{2} + \int_{\Omega} \big ( M_{p} + (T_{p,2})^{2} \big ) | \nabla \hat{\mu}_{p} |^{2} \, dx \\
& \quad = \int_{\Omega} \hat{S}_{p} \hat{\mu}_{p} - \hat{\mu}_{p} \Delta \hat{\varphi}_{p} - \hat{T}_{p} \nabla \bar{q} \cdot \nabla \hat{\mu}_{p} - \big ( \hat{F}_{\eps,p} + \hat{F}_{1,p} \big ) \big ( \de_{t} \hat{\varphi}_{p} - \Delta \hat{\varphi}_{p} \big ) \, dx \\
& \qquad  - \int_{\Omega} \nabla \hat{\mu}_{p} \cdot \left ( \widehat{(T_{p})^{2}} \nabla \mu_{p,1} + \big (\hat{T}_{p} T_{d,1} + T_{p,2} \hat{T}_{d} \big ) \nabla \mu_{d,1} + T_{p,2} T_{d,2} \nabla \hat{\mu}_{d} \right ) \, dx \\
& \quad =: J_{1} + J_{2},
\end{aligned}
\end{equation}
where we used the notation $T_{p,1} = T(\varphi_{p,1})$, $\hat{T}_{p} = T_{p,1}-T_{p,2}$,
$\hat{F}_{\eps,p} = F_{\eps,p}(\varphi_{p,1}, \varphi_{d,1}) - F_{\eps,p}(\varphi_{p,2}, \varphi_{d,2})$, $\hat{S}_{p} = \Sigma_{p}(\bar{n},\varphi_{p,1}, \varphi_{d,1}) - \Sigma_{p}(\bar{n}, \varphi_{p,2}, \varphi_{d,2}) + m_{pp} \hat{\varphi}_{p} + m_{pd} \hat{\varphi}_{d}$, and $\widehat{(T_{p})^{2}} = (T_{p,1})^{2} - (T_{p,2})^{2} = \hat{T}_{p} (T_{p,1} + T_{p,2})$.  Using the Lipschitz continuity of $F_{\eps,p}$, $F_{1,p}$, $T(\cdot)$, $\Sigma_{i}$ and the boundedness of $T(\cdot)$ and $\Sigma_{i}$, we deduce
\begin{align*}
J_{1} & \leq C \big ( \| \hat{\varphi}_{p} \|_{L^{2}(\Omega)} + \| \hat{\varphi}_{d} \|_{L^{2}(\Omega)} + \| \Laplace \hat{\varphi}_{p} \|_{L^{2}(\Omega)} \big ) \big ( \| \hat{\mu}_{p} - (\hat{\mu}_{p})\OO \|_{L^{2}(\Omega)} + |(\hat{\mu}_{p})\OO| \big ) \\
& \quad + C \big ( \| \hat{\varphi}_{p} \|_{L^{2}(\Omega)} + \| \hat{\varphi}_{d} \|_{L^{2}(\Omega)} \big ) \big ( \| \Laplace \hat{\varphi}_{p} \|_{L^{2}(\Omega)} + \| \de_{t} \hat{\varphi}_{p} - ( \de_{t} \hat{\varphi}_{p} )\OO \|_{L^{2}(\Omega)} + | (\de_{t} \hat{\varphi}_{p}) \OO | \big ), \\
& \quad + C \| \hat{\varphi}_{p} \|_{L^{\infty}(\Omega)} \| \nabla \hat{\mu}_{p} \|_{L^{2}(\Omega)} \| \nabla \bar{q} \|_{L^{2}(\Omega)}, \\
J_{2} & \leq C \| \nabla \hat{\mu}_{p} \|_{L^{2}(\Omega)} \big ( \| \hat{\varphi}_{p} \|_{L^{\infty}(\Omega)} + \| \hat{\varphi}_{d} \|_{L^{\infty}(\Omega)} \big ) \big ( \| \nabla \mu_{p,1} \|_{L^{2}(\Omega)} + \| \nabla \mu_{d,1} \|_{L^{2}(\Omega)} \big ) \\
& \quad + \frac{1}{2} \int_{\Omega} (T_{p,2})^{2} | \nabla \hat{\mu}_{p} |^{2} + (T_{d,2})^{2} | \nabla \hat{\mu}_{d} |^{2} \, dx.
\end{align*}
Note that by the Lipschitz property of $\Sigma_{i}$, $F_{\eps,p}$ and $F_{1,p}$,
\begin{align*}
|(\de_{t} \hat{\varphi}_{p})\OO| & = |(\hat{S}_{p})\OO| \leq C \big ( \| \hat{\varphi}_{p} \|_{L^{2}(\Omega)} + \| \hat{\varphi}_{d} \|_{L^{2}(\Omega)} \big ), \\
|( \hat{\mu})\OO| & = |(\hat{F}_{\eps,p} + \hat{F}_{1,p})\OO| \leq  C \big ( \| \hat{\varphi}_{p} \|_{L^{2}(\Omega)} + \| \hat{\varphi}_{d} \|_{L^{2}(\Omega)} \big ),
\end{align*}
and so, upon adding \eqref{Uniq:1} to the corresponding equation for $\hat{\varphi}_{d}$ leads to
\begin{equation}\label{Uniq:2}
\begin{aligned}
&  \frac{d}{dt} \frac{1}{2} \sum_{i=p,d} \big ( \| \nabla \hat{\varphi}_{i} \|_{L^{2}(\Omega)}^{2} + \delta \| \Delta \hat{\varphi}_{i} \|_{L^{2}(\Omega)}^{2} \big )  \\
& \qquad + \frac{1}{2} \sum_{i=p,d} \left ( \| \Laplace \hat{\varphi}_{i} \|_{L^{2}(\Omega)}^{2} + \delta \| \nabla \de_{t} \hat{\varphi}_{i} \|_{L^{2}(\Omega)}^{2} + M_{i} \| \nabla \hat{\mu}_{i} \|_{L^{2}(\Omega)}^{2} \right ) \\
& \quad \leq C \Big [1 + \| \nabla \bar{q} \|_{L^{2}(\Omega)}^{2} + \sum_{ \substack{i=p,d \\ j=1,2 }} \| \nabla \mu_{i,j} \|_{L^{2}(\Omega)}^{2} \Big ] \sum_{i=p,d} \left ( \| \hat{\varphi}_{i} \|_{L^{2}(\Omega)}^{2} +  \| \Laplace \hat{\varphi}_{i} \|_{L^{2}(\Omega)}^{2} \right ),
\end{aligned}
\end{equation}
where in the above we have used the elliptic estimate
\begin{align}\label{Linfty:Ell}
\| f \|_{L^{\infty}(\Omega)} \leq C \| f \|_{H^{2}(\Omega)} \leq C \big ( \| \Laplace f \|_{L^{2}(\Omega)} + \| f \|_{L^{2}(\Omega)} \big ),
\end{align}
for $\hat{\varphi}_{p}$ and $\hat{\varphi}_{d}$ as they satisfy no-flux boundary conditions.

\smallskip

Next, we test the difference of the equations \eqref{eq:pa12} with $\hat{\varphi}_{p}$ which yields
\begin{align*}
\frac{d}{dt} \frac{1}{2} \| \hat{\varphi}_{p} \|_{L^{2}(\Omega)}^{2} & = - \int_{\Omega} \nabla \hat{\varphi}_{p} \cdot \Big ( M_{p} \nabla \hat{\mu}_{p} + \hat{T}_{p} \nabla \bar{q} + \widehat{(T_{p})^{2}} \nabla \mu_{p,1} + (T_{p,2})^{2} \nabla \hat{\mu}_{p} \Big ) \, dx \\
& \quad - \int_{\Omega} \nabla \hat{\varphi}_{p} \cdot \Big ( \big (\hat{T}_{p} T_{d,1} + T_{p,2} \hat{T}_{d} \big ) \nabla \mu_{d,1} + T_{p,2} T_{d,2} \nabla \hat{\mu}_{d} \Big ) - \hat{S}_{p} \hat{\varphi}_{p} \, dx \\
& \leq C \Big( 1 + \| \nabla \bar{q} \|_{L^{2}(\Omega)}^{2} + \sum_{j=1,2} \| \nabla \mu_{p,j} \|_{L^{2}(\Omega)}^{2} \Big ) \Big (\| \nabla \hat{\varphi}_{p} \|_{L^{2}(\Omega)}^{2} + \sum_{i=p,d} \| \hat{\varphi}_{i} \|_{L^{\infty}(\Omega)}^{2}  \Big ) \\
& \quad + \frac{M_{p}}{4} \| \nabla \hat{\mu}_{p} \|_{L^{2}(\Omega)}^{2} + \frac{M_{d}}{4} \| \nabla \hat{\mu}_{d} \|_{L^{2}(\Omega)}^{2},
\end{align*}
and upon adding the analogous estimate obtained from testing \eqref{eq:da12} with $\hat{\varphi}_{d}$ and then adding to \eqref{Uniq:2}, after
applying the elliptic estimate \eqref{Linfty:Ell}, we arrive at the following differential inequality
\begin{align*}
&  \frac{d}{dt} \frac{1}{2} \sum_{i=p,d} \big ( \| \hat{\varphi}_{i} \|_{L^{2}(\Omega)}^{2} + \| \nabla \hat{\varphi}_{i} \|_{L^{2}(\Omega)}^{2} + \delta \| \Delta \hat{\varphi}_{i} \|_{L^{2}(\Omega)}^{2} \big )  \\
& \qquad + \frac{1}{2} \sum_{i=p,d} \left ( \| \Laplace \hat{\varphi}_{i} \|_{L^{2}(\Omega)}^{2} + \delta \| \nabla \de_{t} \hat{\varphi}_{i} \|_{L^{2}(\Omega)}^{2} + \frac{1}{2} M_{i} \| \nabla \hat{\mu}_{i} \|_{L^{2}(\Omega)}^{2} \right ) \\
& \quad \leq C \Big [ 1 + \| \nabla \bar{q} \|_{L^{2}(\Omega)}^{2} + \sum_{\substack{i=p,d \\ j=1,2}} \| \nabla \mu_{i,j} \|_{L^{2}(\Omega)}^{2} \Big] \sum_{i=p,d}
   \left ( \| \hat{\varphi}_{i} \|_{H^{1}(\Omega)}^{2} + \| \Laplace \hat{\varphi}_{i} \|_{L^{2}(\Omega)}^{2} \right ),
\end{align*}
and a Gronwall argument easily entails uniqueness.
\end{proof}


\subsection{Auxiliary pressure and nutrient equations}
We now consider, for $(\phip, \mu_p, \phid, \mu_d)$ obtained from Lemma \ref{lem:CH},  the following system:
\begin{subequations}\label{Aux:q:n}
\begin{alignat}{3}
\label{eq:qa2}
\delta \big( \de_t q + \Delta^2 q ) - \Delta q & = \div \big( T(\phip) \nabla  \mu_{p} + T(\phid) \nabla \mu_{d} \big) + (S_{p} + S_{d})(n, \phip, \phid),\\
\label{eq:na2}
0 & = -\Laplace n + T(\varphi_{p}) n,
\end{alignat}
\end{subequations}
furnished with the initial-boundary conditions resulting from \eqref{add:init}-\eqref{add:dir}.

\begin{lemma}\label{lem:q:n}
Let $(\phip, \mu_p, \phid, \mu_d)$ denote a weak solution obtained from Lemma \ref{lem:CH}.  Then, there exists a \emph{unique} pair $(q,n)$ of solutions to \eqref{Aux:q:n} in the following sense:
\begin{enumerate}
\item[\bf(1)] the functions have the following regularity properties:
\begin{align*}
q & \in L^{2}(0,T;H^{3}(\Omega)) \cap L^{\infty}(0,T;H^{1}_{0}(\Omega)) \cap H^{1}(0,T;H^{-1}(\Omega)), \\
n & \in L^{\infty}(0,T;W^{2,r}(\Omega)) \text{ for any } r < \infty \text{ and } 0 \leq n \leq 1 \text{ a.e. in } Q,
\end{align*}
with
\begin{align*}
q(0) = 0 \text{ in } \Omega, \quad  q = \Delta q = 0,\; n = 1 \text{ on } \Gamma.
\end{align*}
\item[\bf(2)] Equation \eqref{eq:na2} holds a.e. in $Q$ and equation \eqref{eq:qa2} holds for a.e. $t \in (0,T)$ in the following weak sense:
\begin{align*}
0 = \delta \inner{\de_t q}{\zeta} & + \int_{\Omega} \big ( \nabla q - \delta \nabla \Delta q + T(\phip) \nabla \mu_p + T(\phid) \nabla \mu_d \big ) \cdot \nabla \zeta \, dx \\
&  - \int_{\Omega} (S_{p} + S_{d})(n, \phip, \phid) \zeta \, dx,
\end{align*}
for all $\zeta \in H^{1}_{0}(\Omega)$.
\end{enumerate}
\end{lemma}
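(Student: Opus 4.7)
The plan is to decouple the system. The nutrient equation \eqref{eq:na2} does not involve $q$ at all---only the already given datum $T(\phip)$---so I would solve it first for each time slice and then insert the resulting $n$ as a known source term into the pressure equation \eqref{eq:qa2}. This reduces the lemma to two standard linear problems that can be treated in sequence.

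For the nutrient, set $\tilde n := n - 1$ so that $\tilde n$ satisfies $-\Delta \tilde n + T(\phip) \tilde n = -T(\phip)$ with homogeneous Dirichlet condition on $\Gamma$. For a.e.~$t \in (0,T)$ this is a coercive linear elliptic problem with a bounded nonnegative zero-order coefficient, uniquely solvable in $H^{1}_{0}(\Omega)$ via Lax--Milgram. The pointwise bounds $0 \le n \le 1$ follow from the classical truncation tests: multiplying the equation by $(n)^{-}$ (which vanishes on $\Gamma$) yields $\|\nabla (n)^{-}\|_{L^{2}(\Omega)}^{2} + \int_{\Omega} T(\phip) |(n)^{-}|^{2} \, dx = 0$, forcing $n \ge 0$, while multiplying the equation for $n - 1$ by $(n-1)^{+}$ and using the nonnegativity of $T(\phip)$ on the right-hand side $-T(\phip)$ produces three nonnegative terms summing to zero, forcing $n \le 1$. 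Once $n \in L^{\infty}(Q)$, the right-hand side $-T(\phip) n$ lies in $L^{\infty}(\Omega)$ uniformly in $t$, so standard $L^{r}$ elliptic regularity (after lifting the Dirichlet datum) delivers $n \in L^{\infty}(0,T; W^{2,r}(\Omega))$ for every $r < \infty$. Measurability in $t$ comes from uniqueness together with the measurability of $T(\phip)$.

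With $n$ known, \eqref{eq:qa2} is a linear fourth-order parabolic problem for $q$ with ``hinged'' (Navier) boundary conditions $q = \Delta q = 0$. I would implement a Faedo--Galerkin scheme using the Dirichlet Laplace eigenfunctions $\{e_{k}\}$: since $-\Delta e_{k} = \lambda_{k} e_{k}$ together with $e_{k} = 0$ on $\Gamma$ immediately yields $\Delta e_{k} = 0$ on $\Gamma$, these functions are simultaneously eigenfunctions of the biharmonic operator under the Navier BCs, and the scheme is admissible. On each finite-dimensional span the problem reduces to a linear ODE, globally solvable. The a priori estimates come from testing with $q$ (all boundary integrals vanish from $q = 0$ on $\Gamma$) and with $-\Delta q$ (the extra boundary integral from the $\Delta^{2} q$ contribution vanishes from $\Delta q = 0$ on $\Gamma$); combining the two and applying Young's inequality produces
\[
\frac{\delta}{2} \frac{d}{dt} \bigl( \|q\|_{L^{2}(\Omega)}^{2} + \|\nabla q\|_{L^{2}(\Omega)}^{2} \bigr) + \frac{\delta}{2} \|\nabla \Delta q\|_{L^{2}(\Omega)}^{2} + \|\Delta q\|_{L^{2}(\Omega)}^{2} \le C \bigl( 1 + \|\nabla \mu_{p}\|_{L^{2}(\Omega)}^{2} + \|\nabla \mu_{d}\|_{L^{2}(\Omega)}^{2} + \|q\|_{H^{1}(\Omega)}^{2} \bigr),
\]
whose right-hand side is integrable in time thanks to \eqref{reg:fp12} and the uniform $L^{\infty}$ bounds on $n$ and $T(\varphi_{i})$. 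A Gronwall argument combined with elliptic regularity (using $\Delta q \in L^{2}(0,T; H^{1}_{0}(\Omega))$ to upgrade $q$ to $L^{2}(0,T; H^{3}(\Omega))$) produces the claimed regularity for $q$, and rearranging the equation yields $\de_{t} q \in L^{2}(0,T; H^{-1}(\Omega))$. Passage to the Galerkin limit follows by Aubin--Lions compactness, and uniqueness is immediate from linearity: the difference of two solutions satisfies the same energy inequality with vanishing data.

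I do not expect any serious obstacle, as the decoupled problem splits into textbook pieces. The only points requiring some care are the careful verification that boundary integrals actually vanish in the test against $\Delta^{2} q$ under Navier BCs, and the observation that the maximum-principle argument for $n$ relies specifically on the structure $-T(\phip)$ of the right-hand side rather than a generic source.
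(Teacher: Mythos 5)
Your proposal is correct and follows the same high-level structure as the paper (decouple the nutrient from the pressure, solve $n$ first, then feed $n$ into the $q$-equation, derive a priori estimates, prove uniqueness), but the implementation of the two pieces differs from the paper's in a way worth noting. For the nutrient, the paper introduces a parabolic regularization $\gamma\,\partial_t n_\gamma$ with initial datum $n_\gamma(0)=1$, invokes parabolic theory and the weak comparison principle, and then passes to the limit $\gamma\to 0$; you instead treat \eqref{eq:na2} directly as a time-parameterized coercive elliptic problem via Lax--Milgram and obtain $0\le n\le 1$ by Stampacchia-type truncation tests, which is arguably cleaner and more elementary for a genuinely static equation (the sign conventions around $n^-$ need a minor tidy-up, but the idea is sound). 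For the pressure, the paper derives the a priori estimate by testing \eqref{eq:qa2} with $q-\Delta q$ and leaves the actual existence construction implicit; you set up an explicit Faedo--Galerkin scheme using Dirichlet--Laplace eigenfunctions, correctly observing that they simultaneously satisfy the Navier conditions $q=\Delta q=0$ on $\Gamma$, and you arrive at essentially the same energy inequality by testing with $q$ and $-\Delta q$ separately. One small caution: your uniqueness claim ``immediate from linearity'' is a touch imprecise as stated, since the $q$-equation's source $S_p+S_d$ depends (nonlinearly) on $n$; the argument is correct only because you have already shown $n$ is uniquely determined (and the paper makes this explicit by first showing $\hat n=0$, which kills $\hat\Sigma_p+\hat\Sigma_d$, and only then testing the linear $\hat q$-equation). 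With that clarification your route is a valid, and in the nutrient step somewhat more direct, alternative to the paper's.
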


\begin{proof} We investigate the nutrient and pressure equations separately.

\paragraph{Nutrient equation.}
Since $T(\cdot)$ is bounded and non-negative, we may first consider a parabolic regularization to \eqref{eq:na2}, namely,
we add $\gamma \de_{t}n$ on the right-hand side (for $\gamma \in (0,1)$) and we complement the resulting parabolic equation
(for example) with the initial condition $n_{\gamma}(0) := 1$ (which is consistent with the boundary datum). Then, applying the standard parabolic theory and the weak comparison principle it is easy to show that there exists a unique function
$n_{\gamma} \in L^{\infty}(0,T;H^{1}(\Omega)) \cap L^{2}(0,T;H^{2}(\Omega)) \cap H^{1}(0,T;L^{2}(\Omega))$ with $0 \leq n_{\gamma} \leq 1$
a.e.~in $Q$.  It turns out that $n_{\gamma}$ is uniformly bounded in $L^{2}(0,T;H^{1}(\Omega))$ and in passing to the limit $\gamma \to 0$
we deduce the existence of a weak solution $n \in L^{2}(0,T;H^{1}(\Omega))$ to \eqref{eq:na2} with $0 \leq n \leq 1$ a.e. in $Q$.
Then, as $T(\varphi_{p}) n \in L^{\infty}(0,T;L^{\infty}(\Omega))$, applying elliptic regularity we infer $n \in L^{\infty}(0,T;W^{2,r}(\Omega))$
for any $r < \infty$.

\paragraph{Pressure equation.} Given $n$, $\phip$, $\mu_{p}$, $\phid$ and $\mu_{d}$, we test
\eqref{eq:qa2} with $q - \Delta q$. Using the boundary conditions $q = \Delta q = 0$ on $\Gamma$, we then
obtain
\begin{equation}\label{apri:q}
\begin{aligned}
& \frac{d}{dt} \frac{\delta}{2} \big ( \| q \|_{L^{2}(\Omega)}^{2} + \| \nabla q \|_{L^{2}(\Omega)}^{2} \big ) + (1 + \delta) \| \Delta q \|_{L^{2}(\Omega)}^{2} + \delta \| \nabla \Delta q \|_{L^{2}(\Omega)}^{2} + \| \nabla q \|_{L^{2}(\Omega)}^{2} \\
& \quad = \int_{\Omega} \big ( T_{p} \nabla \mu_p + T_d \nabla \mu_d \big ) \cdot \nabla \big ( \Delta q -  q \big ) + (S_{p} + S_{d}) (q - \Delta q) \, dx \\
& \quad \leq \frac{C}{\delta} \big ( \| \nabla \mu_p \|_{L^{2}(\Omega)}^{2} + \| \nabla \mu_d \|_{L^{2}(\Omega)}^{2} \big ) + \frac{\delta}{2} \big ( \| \nabla \Delta q \|_{L^{2}(\Omega)}^{2} + \| \nabla q \|_{L^{2}(\Omega)}^{2} \big ) \\
& \qquad + \frac{C}{\delta} \big (1 + \| \phip \|_{L^{2}(\Omega)}^{2} + \| \phid \|_{L^{2}(\Omega)}^{2} \big ) + \frac{\delta}{2} \big ( \| q \|_{L^{2}(\Omega)}^{2} + \| \Delta q \|_{L^{2}(\Omega)}^{2} \big ).
\end{aligned}
\end{equation}
Integrating in time, using $q(0) = 0$, and applying first a Gronwall argument and then elliptic regularity leads to
\begin{align}\label{apri:q1}
\| q \|_{L^{\infty}(0,T;H^{1}(\Omega))} + \|q \|_{L^{2}(0,T;H^{3}(\Omega))} \leq C_{\delta}.
\end{align}
Then, by a comparison of terms in \eqref{eq:qa2} we infer that
\begin{align}\label{apri:q2}
\| \de_{t} q \|_{L^{2}(0,T;H^{-1}(\Omega))} \leq C_{\delta}.
\end{align}

\paragraph{Uniqueness.} Let $\hat{n} := n_{1} - n_{2}$ and $\hat{q} := q_{1} - q_{2}$ denote the difference between two solution pairs $(q_{1}, n_{1})$ and $(q_{2}, n_{2})$ corresponding to the same data $(\phip, \mu_{p}, \phid, \mu_{d})$.  Then it is straightforward to see that
\begin{align}\label{Uniq:aux:n:q}
0 = - \Delta \hat{n} + T(\phip) \hat{n}, \quad \delta (\de_t \hat{q} + \Delta^{2} \hat{q}) - \Delta \hat{q} = \hat{\Sigma}_{p} + \hat{\Sigma}_{d},
\end{align}
where for $i = p,d$,
\begin{align*}
\hat{\Sigma}_{i} := \Sigma_{i}(n_{1}, \phip, \phid) - \Sigma_{i}(n_{2}, \phip, \phid).
\end{align*}
By testing the first equation of \eqref{Uniq:aux:n:q} with $\hat{n}$ we easily deduce that $\hat{n} = 0$ by the Poincar\'{e} inequality.  Then, testing the second equation of \eqref{Uniq:aux:n:q} with $\hat{q}$ and noting that $\hat{\Sigma}_{i} = 0$ due to $n_{1} = n_{2}$, the uniqueness of solutions is clear.
\end{proof}


\subsection{Fixed point argument}\label{sec:fp}
We will now apply a fixed point argument locally in time, and consider for some $T_{0} \in (0,T]$ the pair $(\bar{q}, \bar{n}) \in L^{2}(0,T_{0};H^{1}(\Omega)) \times L^{2}(0,T_{0};L^{2}(\Omega))$ with $0 \leq \bar{n} \leq 1$ a.e. in $\Omega \times (0,T_{0})$.  Let us introduce the mapping $\mathcal{T} : (\bar{q}, \bar{n}) \to (q,n)$, where $(q,n)$ is the unique solution pair to \eqref{Aux:q:n} with $(\phip, \mu_p, \phid, \mu_d)$ as the unique solution quadruple to \eqref{Aux:CH}.  To specify the domain of $\mathcal{T}$ we define
\begin{align*}
X := \big \{ (q,n) : \| q \|_{L^{2}(0,T_{0};H^{1}(\Omega))} + \| n \|_{L^{2}(0,T_{0};L^{2}(\Omega))} \leq R, \; 0 \leq n \leq 1 \text{ a.e. in } \Omega \times (0,T_{0}) \big \},
\end{align*}
where $R > 0$ is arbitrary but otherwise fixed.  For example, one can take $R = 1$.
Let us mention that from \eqref{reg:fp01}-\eqref{reg:fp12} and \eqref{apri:q} one obtains the estimate
\begin{align*}
\| q \|_{L^{2}(0,t;H^{1}(\Omega))}^{2} \leq  t \| q \|_{L^{\infty}(0,T;H^{1}(\Omega))}^{2} \leq t C_{\eps,\delta,R}
\end{align*}
for any $t \in (0,T]$.  Similarly, since $0 \leq n \leq 1$ a.e. in $Q$, we get
\begin{align*}
\| n \|_{L^{2}(0,t;L^{2}(\Omega))}^{2} \leq t | \Omega |.
\end{align*}
Consequently, for $T_{0}$ sufficiently small (in a way that possibly depends on $\eps$, $\delta$ and $R$), we have
\begin{align}
\label{reg:fp32}
\| q \|_{L^{2}(0,T_{0};H^{1}(\Omega))} + \| n \|_{L^{2}(0,T_{0};L^{2}(\Omega))} \leq C_{\eps, \delta, R} T_{0}^{\frac{1}{2}} \leq R.
\end{align}
This implies that for such a choice of $T_{0}$, the operator $\mathcal{T}$ maps $X$ (which is a convex closed subset
of the product Banach space $L^{2}(0,T_{0};H^{1}(\Omega)) \times L^{2}(0,T_{0};L^{2}(\Omega))$) into itself.

\paragraph{Continuity.}
We now aim to show that $\mathcal{T}: X \to X$ is continuous with respect to the norm of $L^{2}(0,T_{0};H^{1}(\Omega)) \times L^{2}(0,T_{0};L^{2}(\Omega))$, keeping in mind that thanks to the uniqueness results
for the auxiliary problems \eqref{Aux:CH} and \eqref{Aux:q:n}, $\mathcal{T}$ is a single-valued mapping.  Let $(\bar{q}_{k}, \bar{n}_{k})_{k \in \N} \subset X$ be a sequence that converges strongly
to a limit $(\bar{q}, \bar{n})$ in $X$.  We denote $(q_{k}, n_{k}) = \mathcal{T}(\bar{q}_{k}, \bar{n}_{k})$ and $(q,n) := \mathcal{T}(\bar{q}, \bar{n})$.  Then, it is easy to see that
from Lemma \ref{lem:CH} (more precisely \eqref{reg:fp01}-\eqref{reg:fp11}) there exists a corresponding sequence $(\varphi_{p,k}, \mu_{p,k}, \varphi_{d,k}, \mu_{d,k})_{k \in \N}$ such that
\begin{align*}
\| \varphi_{i,k} \|_{H^{1}(0,T_{0};H^{2}(\Omega))} + \| \mu_{i,k} \|_{L^{2}(0,T_{0};H^{1}(\Omega))} \leq C_{\eps, \delta, R}
\end{align*}
for $i = p,d$ and some constant $C=C_{\eps,\delta,R}$ independent of $k$.  Then, standard compactness results \cite[\S 8, Cor. 4]{Simon} yield
\begin{align*}
\varphi_{i,k} & \to \varphi_{i} \text{ strongly in } C^{0}([0,T_{0}];W^{1,r}(\Omega)) \cap C^{0}(\overline{\Omega} \times [0,T_{0}]), \\
\mu_{i,k} & \to \mu_{i} \text{ weakly in } L^{2}(0,T_{0};H^{1}(\Omega)),
\end{align*}
along a non-relabelled subsequence for $i = p,d$, and any $r \in [1,\infty)$ in two dimensions and $r \in [1,6)$ in three dimensions.
Hence, along a non-relabelled subsequence, $\varphi_{p,k} \to \varphi_{p}$ uniformly in $\overline{\Omega} \times [0,T_{0}]$ and thus
$T(\varphi_{p,k}) \to T(\varphi_{p})$ uniformly in $\overline{\Omega} \times [0,T_{0}]$.  Moreover,
one can easily check that the limit functions $\varphi_i$, $\mu_i$ solve \eqref{Aux:CH} with
$q,n$ in place of $\bar{q},\bar{n}$. Next, taking the difference of \eqref{eq:na2} for two indices $a$ and $b$ leads to
\begin{align*}
- \Delta ( n_{a} - n_{b}) + ( T(\varphi_{p,a}) - T(\varphi_{p,b}) ) n_{a} + T(\varphi_{p,b}) (n_{a} - n_{b}) = 0,
\end{align*}
and by testing with $n_{a} - n_{b}$ we obtain by the Poincar\'{e} inequality
\begin{equation}\label{Cauchy}
\begin{aligned}
& \| \nabla (n_{a} - n_{b})\|_{L^{2}(0,T_{0};L^{2}(\Omega))}^{2} \\
& \quad \leq \| n_{a} - n_{b} \|_{L^{2}(0,T_{0};L^{2}(\Omega))} \| T(\varphi_{p,a}) - T(\varphi_{p,b})\|_{L^{2}(0,T_{0};L^{2}(\Omega))} \\
& \quad \leq C \| \nabla (n_{a} - n_{b})\|_{L^{2}(0,T_{0};L^{2}(\Omega))} \| T(\varphi_{p,a}) - T(\varphi_{p,b})\|_{L^{2}(0,T_{0};L^{2}(\Omega))}
\end{aligned}
\end{equation}
after neglecting the non-negative term $T(\varphi_{p,b}) | n_{a} - n_{b}|^{2}$.  Applying the uniform convergence of $T(\varphi_{p,k})$ we see that $\{ n_{k} \}_{k \in \N}$ is a Cauchy sequence in $L^{2}(0,T_{0};H^{1}(\Omega))$ and thus $n_{k} \to n_{*}$ strongly in $L^{2}(0,T_{0};L^{2}(\Omega))$ for some limit function $n_{*}$.  Meanwhile,  from the a priori estimates \eqref{apri:q1}-\eqref{apri:q2} and standard compactness results, along a non-relabelled subsequence it holds that
\begin{align*}
q_{k} \to q_{*} \text{ strongly in } L^{2}(0,T;H^{1}(\Omega)).
\end{align*}
Let us mention here that thanks to the strong convergence of $n_{k} \to n_{*}$ in $L^{2}(0,T_{0};L^{2}(\Omega))$,
along a further subsequence we have a.e.~convergence in $\Omega \times (0,T_{0})$.  Continuity of $\Sigma_{i}$, $i = p,d$, and boundedness are sufficient to ensure that the source terms $\Sigma_{i}(n_{k}, \varphi_{p,k}, \varphi_{d,k})$, $i =p,d$, converge to $\Sigma_{i}(n, \phip, \phid)$ strongly in $L^{2}(0,T_{0};L^{2}(\Omega))$.

Hence, along a non-relabelled subsequence $\mathcal{T}(\bar{q}_{k}, \bar{n}_{k}) \to (q_{*}, n_{*})$.  On the other hand,
it is easy to check that $(q_{*}, n_{*})$ solve \eqref{Aux:q:n} (with the limit $\varphi_i,\mu_i$). Then,
thanks to the uniqueness of the solutions for the auxiliary equations \eqref{Aux:q:n}, one infers that, necessarily,
$(q_{*}, n_{*}) = (q,n) = \mathcal{T}(\bar{q}, \bar{n})$ and the whole sequence converges.  This shows the required continuity of
the map~$\mathcal{T}$.

\paragraph{Compactness.} To apply Schauder's fixed point theorem to $\mathcal{T}$, it remains to show that $\mathcal{T} : X \to X$ is a compact mapping.
This amounts to prove for any sequence $(\bar{q}_{k}, \bar{n}_{k})_{k \in \N} \subset X$, there exists a subsequence $(\bar{q}_{k_{l}}, \bar{n}_{k_{l}})_{l \in \N}$ such that $(q_{k_{l}}, n_{k_{l}}) := \mathcal{T}(\bar{q}_{k_{l}}, \bar{n}_{k_{l}})$ converges strongly to some limit $(q,n)$ in $L^{2}(0,T_{0};H^{1}(\Omega)) \times L^{2}(0,T_{0};L^{2}(\Omega))$.  Note that by the definition of $X$ we have
\begin{align*}
\| q_{k} \|_{L^{2}(0,T_{0};H^{1}(\Omega))} + \| n_{k} \|_{L^{2}(0,T_{0};L^{2}(\Omega))} \leq R
\end{align*}
and $0 \leq n_{k} \leq 1$ a.e. in $\Omega \times (0,T_{0})$. This boundedness and
a similar argument to the proof of the continuity of $\mathcal{T}$ permit us to conclude. Indeed, by repeating the a priori estimates given above, one can easily prove that the sequence
$(q_k,n_k)$ is uniformly bounded in a better space, whence follows the desired compactness assertion.

\smallskip

We now state the main result of this section.
\begin{thm}[Local existence]\label{thm:reg:local}
Let Assumption~\ref{ass:pd} hold. Moreover,
for each $\eps \in (0,1)$, $\delta \in (0,1)$
let us assume that $F_{\eps} : \RR^{2} \to [0,+\infty)$ and $F_{1}: \RR^{2} \to \RR$ are given such
that $\nabla F_{\eps}$, $\nabla F_{1}$ are globally Lipschitz continuous. Then, there exist a time $T_{0} \in (0,T]$
and functions $(\phip, \mu_p, \phid, \mu_d, q, n)$ such that
\begin{enumerate}
\item[\bf(1)] the following regularity properties
\begin{align*}
\varphi_{i} & \in H^{1}(0,T_{0};H^{2}(\Omega)) \quad \text{ for } i = p,d, \\
\mu_{i} & \in L^{2}(0,T_{0};H^{1}(\Omega)) \quad \text{ for } i = p,d,  \\
q & \in L^{2}(0,T_{0};H^{3}(\Omega)) \cap L^{\infty}(0,T_{0};H^{1}_{0}(\Omega)) \cap H^{1}(0,T_{0};H^{-1}(\Omega)), \\
n & \in L^{\infty}(0,T_{0};W^{2,r}(\Omega)) \text{ for any } r <\infty \text{ and } 0 \leq n \leq 1 \text{ a.e. in } \Omega \times (0,T_{0}),
\end{align*}
hold together with
\begin{align*}
\varphi_{i}(0) = \varphi_{0,i,\delta},~i=p,d, \; q(0) = 0 \text{ in } \Omega, \quad \Laplace q = 0, \; n = 1 \text{ on } \de \Omega \times (0,T_{0}).
\end{align*}
\item[\bf(2)] Equations \eqref{eq:pa2}, \eqref{eq:da2}, \eqref{eq:Spa}, \eqref{eq:Sda} and \eqref{eq:na} hold a.e.~in $\Omega \times (0,T_{0})$, and equations \eqref{eq:pa1}, \eqref{eq:da1} and \eqref{eq:qa} hold for a.e. $t \in (0,T_{0})$ in the following weak sense:
\begin{align*}
0 & = \int_{\Omega} (\de_{t} \varphi_{p} - S_{p})\zeta + \big ( M_{p} \nabla \mu_{p} + T(\varphi_{p}) \big ( \nabla q + T(\varphi_{p}) \nabla \mu_{p} +  T(\varphi_{d}) \nabla \mu_{d} \big ) \big ) \cdot \nabla \zeta \, dx, \\
0 & = \int_{\Omega} (\de_{t} \varphi_{d} - S_{d})\zeta + \big ( M_{d} \nabla \mu_{d} + T(\varphi_{d}) \big ( \nabla q + T(\varphi_{d}) \nabla \mu_{d} + T(\varphi_{p}) \nabla \mu_{p} \big ) \big ) \cdot \nabla \zeta \, dx, \\
0 & = \delta \inner{\de_t q}{\xi} + \int_{\Omega} \big ( \nabla q - \delta \nabla \Delta q + T(\phip) \nabla \mu_{p} + T(\phid) \nabla \mu_{d} \big ) \cdot \nabla \xi -(S_{p} + S_{d}) \xi \, dx
\end{align*}
for all $\zeta \in H^{1}(\Omega)$ and $\xi \in H^{1}_{0}(\Omega)$.
\end{enumerate}
\end{thm}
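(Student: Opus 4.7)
The strategy is to apply Schauder's fixed point theorem to the map $\mathcal{T}:X\to X$ defined in Section~\ref{sec:fp}, which sends $(\bar q,\bar n)$ to $(q,n)$ by composing the solution operator of the auxiliary Cahn--Hilliard system \eqref{Aux:CH} (given by Lemma~\ref{lem:CH}) with that of the pressure--nutrient system \eqref{Aux:q:n} (given by Lemma~\ref{lem:q:n}). The set $X$ is convex, closed, and nonempty in the Banach space $L^{2}(0,T_{0};H^{1}(\Omega))\times L^{2}(0,T_{0};L^{2}(\Omega))$, and $\mathcal{T}$ is single-valued by the uniqueness statements in Lemmas~\ref{lem:CH}--\ref{lem:q:n}. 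Once $\mathcal{T}$ is shown to map $X$ into itself, to be continuous, and to be compact, Schauder's theorem yields a fixed point $(\bar q,\bar n)=(q,n)$, and unwinding the construction recovers the full solution tuple $(\phip,\mu_p,\phid,\mu_d,q,n)$ with the stated regularity in item (1) and satisfying the weak equations in item (2).

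First I would check $\mathcal{T}(X)\subset X$ for $T_{0}$ small enough. From Lemma~\ref{lem:q:n} the image $(q,n)$ enjoys $q\in L^{\infty}(0,T;H^{1}(\Omega))$ with norm bounded by $C_{\eps,\delta,R}$ (via \eqref{apri:q1}, since the norms of $(\phip,\mu_p,\phid,\mu_d)$ entering the right-hand side are controlled in terms of $\|\bar q\|_{L^{2}(H^{1})}\le R$ by \eqref{reg:fp01}--\eqref{reg:fp12}), while $0\le n\le 1$ a.e. As noted in \eqref{reg:fp32}, this gives
\[
\|q\|_{L^{2}(0,T_{0};H^{1}(\Omega))}+\|n\|_{L^{2}(0,T_{0};L^{2}(\Omega))}\le T_{0}^{1/2}\bigl(C_{\eps,\delta,R}+|\Omega|^{1/2}\bigr),
\]
which is at most $R$ provided $T_{0}$ is chosen small in terms of $\eps,\delta,R$; the a priori non-negativity and upper bound on $n$ are preserved by Lemma~\ref{lem:q:n}.

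For continuity I would take $(\bar q_{k},\bar n_{k})\to(\bar q,\bar n)$ strongly in $X$. The uniform estimates \eqref{reg:fp01}--\eqref{reg:fp11} combined with the Aubin--Lions--Simon lemma yield strong convergence $\varphi_{i,k}\to \varphi_{i}$ in $C^{0}(\overline{\Omega}\times[0,T_{0}])$, hence uniform convergence of $T(\varphi_{p,k})$; testing the difference of two nutrient equations as in \eqref{Cauchy} and using Poincaré inequality produces a Cauchy estimate for $n_{k}$ in $L^{2}(0,T_{0};H^{1}(\Omega))$, giving strong convergence $n_{k}\to n_{*}$. For $q_{k}$ the bounds \eqref{apri:q1}--\eqref{apri:q2} coupled with Aubin--Lions give strong convergence $q_{k}\to q_{*}$ in $L^{2}(0,T_{0};H^{1}(\Omega))$. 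Passing to the limit in all equations (using Lipschitzness of $\Sigma_{i}$, $F_{\eps,i}$, $F_{1,i}$ and the dominated convergence theorem), one verifies that the limits solve \eqref{Aux:CH}--\eqref{Aux:q:n} with $(\bar q,\bar n)$ as data; uniqueness in Lemmas~\ref{lem:CH}--\ref{lem:q:n} forces $(q_{*},n_{*})=(q,n)=\mathcal{T}(\bar q,\bar n)$, and convergence of the whole sequence follows by a standard subsequence argument.

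For compactness of $\mathcal{T}$, I would observe that for any sequence $(\bar q_{k},\bar n_{k})\subset X$, the bounds \eqref{reg:fp01}--\eqref{reg:fp11} and \eqref{apri:q1}--\eqref{apri:q2} together with the $L^{\infty}(0,T_{0};W^{2,r}(\Omega))$ bound on $n_{k}$ coming from Lemma~\ref{lem:q:n} place $\mathcal{T}(\bar q_{k},\bar n_{k})$ into a space that embeds compactly into $L^{2}(0,T_{0};H^{1}(\Omega))\times L^{2}(0,T_{0};L^{2}(\Omega))$, via Aubin--Lions for $q_{k}$ and via the argument used for continuity (strong convergence of $\varphi_{p,k}$ in $C^{0}(\overline{\Omega}\times[0,T_{0}])$ transferred through the elliptic equation \eqref{eq:na2} as in \eqref{Cauchy}) for $n_{k}$.

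The main obstacle I anticipate is compactness in time for the nutrient $n_{k}$: because \eqref{eq:na2} is quasi-static, no direct estimate on $\partial_{t}n_{k}$ is available, so temporal compactness has to be imported from the coupling with $\varphi_{p,k}$, which fortunately has the strong $H^{1}(0,T_{0};H^{2}(\Omega))$ regularity furnished by Lemma~\ref{lem:CH}. The verification of the self-map property is uncomfortable only because the $L^{\infty}$ bounds on $q$ and $n$ depend on $\eps,\delta,R$; this is precisely what forces the localization in time and will be resolved by iteration in the subsequent section to extend the solution up to $T$.
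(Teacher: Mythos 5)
Your proposal follows the paper's own route exactly: decompose the regularized system into the auxiliary Cahn--Hilliard problem (Lemma~\ref{lem:CH}) and the pressure--nutrient problem (Lemma~\ref{lem:q:n}), and apply Schauder's fixed point theorem to the composed map $\mathcal{T}$ on the convex closed set $X$, verifying the self-map property via the smallness of $T_{0}$ (as in \eqref{reg:fp32}), continuity via the Aubin--Lions strong convergence of the $\varphi_{i,k}$ and the elliptic Cauchy estimate \eqref{Cauchy} for $n_{k}$, and compactness via the improved bounds from the two lemmas. The observation that temporal compactness for the quasi-static $n_{k}$ must be imported from the strong convergence of $\varphi_{p,k}$ through the elliptic equation is precisely the mechanism the paper uses.
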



\subsection{A priori estimates}
We now derive some a priori estimates for the solution $(\phip, \mu_{p}, \phid, \mu_{d}, q, n)$ to \eqref{delta:eps:Reg}
obtained from Theorem~\ref{thm:reg:local}. All these estimates will be independent of the final time $T_0$, which will
allow us to extend the solution up to the full time interval $[0,T]$. For this reason, although with some abuse of notation,
we shall directly work on the original time interval $[0,T]$ and postpone the details of the extension argument to the next
subsection.
Below the symbol $C$ denotes constants that are independent of $\delta$ and $\eps$.

\paragraph{First estimate.} From the nutrient equation \eqref{eq:na}, we obtain from the boundedness of the cut-off operator $T$ and of $n$ the estimate
\begin{align*}
\| \nabla n \|_{L^{2}(\Omega)}^{2} + \int_{\Omega} T(\phip) |n-1|^{2} \, dx = -\int_{\Omega} T(\phip) (n-1) \, dx \leq C.
\end{align*}
Hence, integrating in time yields
\begin{align*}
\| n \|_{L^{2}(0,T;H^{1}(\Omega))} \leq C.
\end{align*}
The weak comparison principle then yields that $0 \leq n \leq 1$ a.e.~in $\Omega \times (0,T)$.
Hence, by elliptic regularity, we arrive at
\begin{align}\label{Unif:delta:1}
\| n \|_{L^{\infty}(0,T;W^{2,r}(\Omega))} \leq C \quad \forall r < \infty.
\end{align}

\paragraph{Second estimate.}
Testing \eqref{eq:pa1} with $\mu_{p}$, \eqref{eq:pa2} with $\de_{t} \phip$, \eqref{eq:da1} with $\mu_{d}$ and \eqref{eq:da2} with
$\de_{t} \phid$, and summing leads to an analogous identity to \eqref{energyx} but with $\bar{q}$ replaced by $q$.  Then, adding the resulting identity to that obtained from testing \eqref{eq:qa} with $q$ leads to the equality
\begin{equation}\label{Apri:1}
\begin{aligned}
& \frac{d}{dt} \big ( E_{\eps}(\phip, \phid) + \frac{\delta}{2} \| q \|_{L^{2}(\Omega)}^{2} \big )
+ \sum_{i=p,d} \big ( M_{i} \| \nabla \mu_{i} \|_{L^{2}(\Omega)}^{2} + \delta \| \nabla \de_{t} \varphi_{i} \|_{L^{2}(\Omega)}^{2}  \big ) \\
& \qquad + \delta \| \Delta q \|_{L^{2}(\Omega)}^{2} + \| \nabla q + T(\phip) \nabla \mu_{p} + T(\phid) \nabla \mu_{d} \|_{L^{2}(\Omega)}^{2} \\
& \quad = \int_{\Omega} (S_{p} \mu_{p} + S_{d} \mu_{d}) + (S_{p} + S_{d}) q \, dx.
\end{aligned}
\end{equation}
Testing now \eqref{eq:pa1} with $\phip$, \eqref{eq:da1} with $\phid$ and summing the obtained relations yields
\begin{equation}\label{Apri:2}
\begin{aligned}
& \frac{1}{2} \frac{d}{dt} \big ( \| \phip \|_{L^{2}(\Omega)}^{2} + \| \phid \|_{L^{2}(\Omega)}^{2} \big ) \\
& \quad = - \sum_{i = p,d} \int_{\Omega}  M_{i} \nabla \mu_{i} \cdot \nabla \varphi_{i} - S_{i} \varphi_{i} \, dx \\
& \qquad - \int_{\Omega} \big ( \nabla q + T(\phip) \nabla \mu_{p} + T(\phid) \nabla \mu_{d} \big ) \cdot \big ( T(\phip) \nabla \phip + T(\phid) \nabla \phid \big ) \, dx.
\end{aligned}
\end{equation}
Summing \eqref{Apri:1} and \eqref{Apri:2} then gives
\begin{equation}\label{Apri:3}
\begin{aligned}
& \frac{d}{dt} \int_{\Omega} \left ( F_{\eps}(\phip, \phid) + F_{1}(\phip, \phid)
  + \sum_{i=p,d} \frac{1}{2} \big ( | \nabla \varphi_{i} |^{2} + | \varphi_{i} |^{2} \big ) + \frac{\delta}{2} | q|^{2} \right ) \, dx \\
& \qquad + \delta \| \Delta q \|_{L^{2}(\Omega)}^{2} + \sum_{i=p,d} \Big ( \frac{1}{2} M_{i} \| \nabla \mu_{i} \|_{L^{2}(\Omega)}^{2}
  + \delta \| \nabla \de_{t} \varphi_{i} \|_{L^{2}(\Omega)}^{2} \Big )\\
& \qquad + \frac{1}{2} \| \nabla q + T(\phip) \nabla \mu_{p} + T(\phid) \nabla \mu_{d}\|_{L^{2}(\Omega)}^{2} \\
& \quad \leq C + C \sum_{i=p,d} \big ( \| \varphi_{i} \|_{L^{2}(\Omega)}^{2} + \| \nabla \varphi_{i} \|_{L^{2}(\Omega)}^{2} \big ) + \int_{\Omega} S_{p} \mu_{p} + S_{d} \mu_{d} + (S_{p} + S_{d}) q \, dx.
\end{aligned}
\end{equation}
It remains to control the integral on the right-hand side of \eqref{Apri:3}.  To handle the pressure term we consider,
for a.e.~$t \in (0,T)$, the function $N_{q}(t) \in H^{2}(\Omega) \cap H^{1}_{0}(\Omega)$ as the unique solution to the Poisson problem
\begin{align*}
-\Delta N_{q}(t) = q(t) \text{ in } \Omega, \quad N_{q}(t) = 0 \text{ on } \Gamma.
\end{align*}
As $q(t) \in L^{2}(\Omega)$, elliptic regularity shows that $\| N_{q} \|_{H^{2}(\Omega)} \leq C_{*} \| q \|_{L^{2}(\Omega)}$.
Furthermore, it can be shown that (see for example \cite[\S 2.2]{GLDirichlet})
\begin{align*}
\inner{\de_{t} q}{N_{q}} = \frac{1}{2} \frac{d}{dt} \| \nabla N_{q} \|_{L^{2}(\Omega)}^{2},
\end{align*}
where $\inner{\cdot}{\cdot}$ is the duality pairing between $H^{1}_{0}(\Omega)$ and $H^{-1}(\Omega)$.
We additionally claim that $N_{q}(0) = 0$.  Indeed, as $q(0) = 0$ from \eqref{add:init}, the only solution to the Laplace equation with zero Dirichlet condition is zero.  Then, upon testing \eqref{eq:qa} with $N_{q}$ leads to
\begin{align*}
& \frac{\delta}{2} \frac{d}{dt} \| \nabla N_{q} \|_{L^{2}(\Omega)}^{2} + \|q\|_{L^{2}(\Omega)}^{2} + \delta \| \nabla q \|_{L^{2}(\Omega)}^{2} \\
& \quad  = \int_{\Omega} (S_{p} + S_{d}) N_{q} - (T(\phip) \nabla \mu_{p} + T(\phid) \nabla \mu_{d}) \cdot \nabla N_{q}  \, dx \\
& \quad \leq C \big ( 1 + \sum_{i=p,d} \| \varphi_{i} \|_{L^{2}(\Omega)} \big ) \| N_{q} \|_{L^{2}(\Omega)} + \sum_{i=p,d} \| \nabla \mu_{i} \|_{L^{2}(\Omega)} \| \nabla N_{q} \|_{L^{2}(\Omega)},
\end{align*}
where we have also used that $(- \delta \Delta q, \Delta N_{q}) = \delta ( \Delta q, q) = - \delta \| \nabla q \|_{L^{2}(\Omega)}^{2}$.  Therefore, by Young's and Poincar\'e's
inequalities, as well as the estimate $\norm{N_{q}}_{H^{2}(\Omega)} \leq C_{*} \norm{q}_{L^{2}(\Omega)}$, we arrive at the following estimate
\begin{equation}\label{Apri:pressure}
\begin{aligned}
& \frac{\delta}{2} \frac{d}{dt} \| \nabla N_{q} \|_{L^{2}(\Omega)}^{2} + \frac{1}{2} \| q \|_{L^{2}(\Omega)}^{2} + \delta \| \nabla q \|_{L^{2}(\Omega)}^{2} \\
& \quad \leq C \big (1 + \sum_{i=p,d} \big ( \|  \nabla \mu_{i} \|_{L^{2}(\Omega)}^{2} + \| \varphi_{i} \|_{L^{2}(\Omega)}^{2} \big ) \big ).
\end{aligned}
\end{equation}
By virtue of the computations performed in \eqref{source:mu} we infer that
\begin{equation}\label{Apri:S:mu}
\begin{aligned}
& \int_{\Omega} S_{p} \mu_{p} + S_{d} \mu_{d} \, dx \\
& \quad \leq \frac{M_{p}}{4} \| \nabla \mu_{p} \|_{L^{2}}^{2} + \frac{M_{d}}{4} \| \nabla \mu_{d} \|_{L^{2}(\Omega)}^{2}
  + C_{\eps} \big ( 1 + \| \phip \|_{L^{2}(\Omega)}^{2} + \| \phid \|_{L^{2}(\Omega)}^{2} \big ).
\end{aligned}
\end{equation}
Then, let $\kappa$ be a sufficiently small constant such that $\kappa C \leq \frac{1}{4} \min (M_{p}, M_{d})$, where $C$ is the constant on the right-hand side of \eqref{Apri:pressure}, and adding $\kappa$ times \eqref{Apri:pressure} to \eqref{Apri:3} yields
\begin{equation}\label{Apri:4}
\begin{aligned}
& \frac{d}{dt} \left  ( \int_{\Omega} (F_{\eps} + F_{1})(\phip, \phid) \, dx + \sum_{i=p,d} \frac{1}{2} \| \varphi_{i} \|_{H^{1}(\Omega)}^{2} + \frac{\delta}{2} \big ( \| q \|_{L^{2}(\Omega)}^{2} + \kappa \| \nabla N_{q} \|_{L^{2}(\Omega)}^{2} \big )  \right ) \\
& \qquad + \sum_{i=p,d} \big ( \tfrac{1}{4} M_{i} \| \nabla \mu_{i} \|_{L^{2}(\Omega)}^{2} + \delta \| \nabla \de_{t} \varphi_{i} \|_{L^{2}(\Omega)}^{2} \big ) + \frac{\kappa}{4}  \| q \|_{L^{2}(\Omega)}^{2} + \delta \kappa \| \nabla q \|_{L^{2}(\Omega)}^{2}  \\
& \qquad + \delta \| \Delta q \|_{L^{2}(\Omega)}^{2} + \frac{1}{2} \| \nabla q + T(\phip) \nabla \mu_{p} + T(\phid) \nabla \mu_{d} \|_{L^{2}(\Omega)}^{2} \\
& \quad \leq C_{\eps} + C_{\eps} \sum_{i=p,d} \| \varphi_{i} \|_{H^{1}(\Omega)}^{2},
\end{aligned}
\end{equation}
where we have estimated the last term on the right-hand side of \eqref{Apri:3} as follows:
\begin{align*}
\int_{\Omega} (S_{p} + S_{d}) q \, dx \leq \frac{\kappa}{4} \| q \|_{L^{2}(\Omega)}^{2} + C \big ( 1 + \| \phip \|_{L^{2}(\Omega)}^{2} + \| \phid \|_{L^{2}(\Omega)}^{2} \big ).
\end{align*}
Then, applying Gronwall's inequality to \eqref{Apri:4} yields the following estimates uniform in $\delta$:
\begin{equation}\label{Unif:delta:2}
\begin{aligned}
\| (F_{\eps} + F_{1})(\phip, \phid)\|_{L^{\infty}(0,T;L^{1}(\Omega))} + \| \phip \|_{L^{\infty}(0,T;H^{1}(\Omega))} + \| \phid \|_{L^{\infty}(0,T;H^{1}(\Omega))} & \leq C_{\eps}, \\
\| \nabla \mu_{p} \|_{L^{2}(0,T;L^{2}(\Omega))}  + \| \nabla \mu_{d} \|_{L^{2}(0,T;L^{2}(\Omega))} + \| q \|_{L^{2}(0,T;L^{2}(\Omega))} & \leq C_{\eps}, \\
\sqrt{\delta} \big ( \| q \|_{L^{\infty}(0,T; L^{2}(\Omega))} + \| \nabla \de_{t} \phip \|_{L^{2}(0,T;L^{2}(\Omega))} + \| \nabla \de_{t} \phid \|_{L^{2}(0,T;L^{2}(\Omega))} \big )& \leq C_{\eps},
\end{aligned}
\end{equation}
also thanks to the fact that $q(0) = \Delta q(0) = N_{q}(0) = 0$ and that
\begin{align*}
\| \varphi_{i,0,\delta} \|_{H^{1}(\Omega)} \leq C \| \varphi_{i,0} \|_{H^{1}(\Omega)}
\end{align*}
from \eqref{ic:delta:Unif:H1}. Then, testing \eqref{eq:qa} wth $q$ and estimating the right-hand side gives
\begin{align*}
& \frac{\delta}{2} \frac{d}{dt} \norm{q}_{L^{2}(\Omega)}^{2} + \norm{\nabla q}_{L^{2}(\Omega)}^{2} + \delta \norm{\nabla \Laplace q}_{L^{2}(\Omega)}^{2} \\
& \quad \leq C \big ( 1 + \norm{q}_{L^{2}(\Omega)}^{2} + \sum_{i=p,d} \norm{\varphi_{i}}_{L^{2}(\Omega)}^{2} + \norm{\nabla \mu_{i}}_{L^{2}(\Omega)}^{2} \big ) + \frac{1}{2} \norm{\nabla q}_{L^{2}(\Omega)}^{2}.
\end{align*}
In light of \eqref{Unif:delta:2}, and recalling the initial condition $q(0) = 0$, we find that
\begin{align}
\label{Unif:delta:2a}
\norm{\nabla q}_{L^{2}(0,T;L^{2}(\Omega))} \leq C_{\eps}.
\end{align}

\paragraph{Third estimate.}
Thanks to the Lipschitz regularity of $F_{\eps,i}$ and $F_{1,i}$ for $i = p,d$, it is easy to see that by \eqref{Unif:delta:2}
\begin{align*}
| (\mu_{i})\OO |^{2} \leq C_{\eps} \big ( 1 + \| \phip \|_{L^{2}(\Omega)}^{2} + \| \phid \|_{L^{2}(\Omega)}^{2} \big ) \in L^{\infty}(0,T).
\end{align*}
Hence, by Poincar\'{e}'s inequality and \eqref{Unif:delta:2}, we deduce
\begin{align}\label{Unif:delta:3}
\| \mu_{p} \|_{L^{2}(0,T;H^{1}(\Omega))} + \| \mu_{d} \|_{L^{2}(0,T;H^{1}(\Omega))} \leq C_{\eps}.
\end{align}

\paragraph{Fourth estimate.}
Testing \eqref{eq:pa2} with $\Delta \phip$, and in light of \eqref{Unif:delta:3} and the Lipschitz regularity of $F_{\eps,p}$ and $F_{1,p}$, we have
\begin{align}\label{Apri:5}
\frac{1}{2} \| \Delta \phip \|_{L^{2}(\Omega)}^{2} + \frac{d}{dt} \frac{\delta}{2} \| \Delta \phip \|_{L^{2}(\Omega)}^{2} \leq C_{\eps} \big ( 1 + \| \mu_{p} \|_{L^{2}(\Omega)}^{2} + \| \phip \|_{L^{2}(\Omega)}^{2} + \| \phid \|_{L^{2}(\Omega)}^{2} \big ).
\end{align}
Recalling \eqref{initialcond:delta} we see that
\begin{align}\label{Lap:delta}
\delta \| \Delta \varphi_{p,0, \delta} \|_{L^{2}(\Omega)}^{2} \leq C \delta (1 + \delta^{-1}) \| \varphi_{i,0} \|_{H^{1}(\Omega)}^{2} \leq C.
\end{align}
Thus, integrating \eqref{Apri:5} in time and applying the elliptic estimate
$$
\norm{u}_{H^{2}(\Omega)} \leq C \big ( \norm{\Delta u}_{L^{2}(\Omega)} + \norm{u}_{L^{2}(\Omega)} \big )
$$
(holding when $u$ satisfies, as here, no-flux conditions),
we obtain
\begin{equation}\label{Unif:delta:4}
\begin{aligned}
\| \phip \|_{L^{2}(0,T;H^{2}(\Omega))} + \| \phid \|_{L^{2}(0,T;H^{2}(\Omega))} & \leq C_{\eps}, \\
\sqrt{\delta} \big ( \| \phip \|_{L^{\infty}(0,T;H^{2}(\Omega))} + \| \phid \|_{L^{\infty}(0,T;H^{2}(\Omega))} \big ) & \leq C_{\eps}.
\end{aligned}
\end{equation}
Then, by inspection of \eqref{eq:pa1} we find that
\begin{align*}
\| \de_{t} \phip \|_{H^{1}(\Omega)'} \leq C \big ( \| \nabla q \|_{L^{2}(\Omega)} + \| \nabla \mu_{p} \|_{L^{2}(\Omega)} + \| \nabla \mu_{d} \|_{L^{2}(\Omega)} + \| S_{p} \|_{L^{2}(\Omega)} \big ),
\end{align*}
with a similar relation holding for $\phid$. Hence, we infer that
\begin{align}\label{Unif:delta:5a}
\| \de_{t} \phip \|_{L^{2}(0,T;H^{1}(\Omega)')} + \| \de_{t} \phid \|_{L^{2}(0,T;H^{1}(\Omega)')} \leq C_{\eps}.
\end{align}

\paragraph{Fifth estimate.}
Testing \eqref{eq:qa} with $q -\Delta q \in H^{1}_{0}(\Omega)$ and performing standard computations leads to the analogue of \eqref{apri:q}.
Then, multiplying both sides of \eqref{apri:q} by $\delta$ and using a Gronwall argument yields
\begin{equation}\label{Unif:delta:5b}
\begin{aligned}
\delta \| q \|_{L^{\infty}(0,T;H^{1}(\Omega))} + \delta \| \nabla \Delta q \|_{L^{2}(0,T;L^{2}(\Omega))} & \leq C_{\eps}, \\
\sqrt{\delta} \big ( \| \nabla q \|_{L^{2}(0,T;L^{2}(\Omega))} + \| \Delta q \|_{L^{2}(0,T;L^{2}(\Omega))} \big ) & \leq C_{\eps}.
\end{aligned}
\end{equation}
Then, by inspection of \eqref{eq:qa}, and recalling \eqref{Unif:delta:2}, \eqref{Unif:delta:2a} and \eqref{Unif:delta:5b}, we infer
\begin{equation}
\label{Unif:delta:6}
\begin{aligned}
\delta \| \de_{t} q \|_{L^{2}(0,T;H^{-1}(\Omega))} & \leq C \sum_{i=p,d} \big ( 1 + \| \nabla \mu_{i} \|_{L^{2}(0,T;L^{2}(\Omega))} + \| \varphi_{i} \|_{L^{2}(0,T;L^{2}(\Omega))} \big ) \\
& \quad +  C \| \nabla q \|_{L^{2}(0,T;L^{2}(\Omega))} + C \delta \| \nabla \Delta q \|_{L^{2}(0,T;L^{2}(\Omega))} + C \\
& \leq C_{\eps}.
\end{aligned}
\end{equation}
Meanwhile, testing \eqref{eq:pa2} with $-\delta \Delta \de_{t} \phip$ and \eqref{eq:da2} with $- \delta \Delta \de_{t} \phid$ we obtain using \eqref{Lap:delta}
\begin{align}
\label{Unif:delta:7a}
\delta \big ( \| \Delta \de_{t} \phip \|_{L^{2}(0,T;L^{2}(\Omega))} + \| \Delta \de_{t} \phid \|_{L^{2}(0,T;L^{2}(\Omega))} \big ) & \leq C_{\eps}.
\end{align}
On the other hand, testing \eqref{eq:pa1} with $\sqrt{\delta} \de_{t} \varphi_{p}$, we obtain
\begin{align*}
\sqrt{\delta} \norm{\de_{t} \varphi_{p}}_{L^{2}(\Omega)}^{2} & \leq C \big (1 +  \sum_{i=,p,d}  \big ( \norm{\nabla \mu_{i}}_{L^{2}(\Omega)}^{2} + \norm{\varphi_{i}}_{L^{2}(\Omega)}^{2} \big )
+ \norm{\nabla q}_{L^{2}(\Omega)}^{2} + \delta \norm{\nabla \de_{t} \varphi_{p}}_{L^{2}(\Omega)}^{2} \big ) \\
& \quad  + \frac{1}{2} \sqrt{\delta} \norm{\de_{t} \varphi_{p}}_{L^{2}(\Omega)}^{2}.
\end{align*}
Recalling \eqref{Unif:delta:2} and \eqref{Unif:delta:2a}, we then deduce that
\begin{align*}
\sqrt{\delta} \norm{\de_{t} \varphi_{p}}_{L^{2}(0,T;L^{2}(\Omega))}^{2} + \sqrt{\delta} \norm{\de_{t} \varphi_{d}}_{L^{2}(0,T;L^{2}(\Omega))}^{2} \leq C_{\eps},
\end{align*}
whence by repeating the same argument on $\varphi_d$ and by applying elliptic regularity, \eqref{Unif:delta:7a} yields
\begin{align}\label{Unif:delta:7}
 \delta \| \phip \|_{H^1(0,T;H^2(\Omega))} + \delta \| \phid \|_{H^1(0,T;H^2(\Omega))} \le C_{\eps}.
\end{align}


\subsection{Extension to $[0,T]$}
Thanks to the a priori estimates \eqref{Unif:delta:1}, \eqref{Unif:delta:2}, \eqref{Unif:delta:3},
\eqref{Unif:delta:4}, \eqref{Unif:delta:5a}, \eqref{Unif:delta:5b},
\eqref{Unif:delta:6}, \eqref{Unif:delta:7a} and \eqref{Unif:delta:7}, which have a uniform character with respect to the time
variable, we can extend the local solution obtained from Theorem \ref{thm:reg:local} up to the full reference interval $[0,T]$.
This can be achieved by means of a standard contradiction argument which we now outline.
Suppose there exists a maximal time of existence $T_{m} \in (0,T]$
for the weak solution $(\phip, \mu_{p}, \phid, \mu_{d}, q, n)$ to \eqref{delta:eps:Reg}.
To be precise, $T_m$ is defined as the largest time such that $(\phip, \mu_{p}, \phid, \mu_{d}, q, n)$ exists with the regularity properties
specified in the statement of Theorem \ref{thm:reg:local}.
We want to prove that, in fact, $T_m=T$. If this is not the case,
repeating the a priori estimates mentioned above (but now working
on the maximal time interval $[0,T_m]$), we deduce in particular that
\begin{align*}
  \| \phip \|_{C^{0}([0,T_m];H^{2}(\Omega))} + \| \phid \|_{C^{0}([0,T_m];H^{2}(\Omega))}
   + \| q \|_{C^{0}([0,T_m];H^{1}_{0}(\Omega))}
  \leq C_{\eps,\delta},
\end{align*}
where $C_{\eps,\delta}$ is independent of $T_m$. Note that, to obtain the above bound, we used in particular
\eqref{Unif:delta:7a} with the continuous embedding $H^{1}(0,T_m) \subset C^{0}([0,T_m])$
and \eqref{Unif:delta:5b}-\eqref{Unif:delta:6} with the continuous embedding
\begin{align*}
 L^{2}(0,T_{m};H^{3}(\Omega) \cap H^{1}_0(\Omega)) \cap H^{1}(0,T_{m};H^{-1}(\Omega)) \subset C^0([0,T_m];H^{1}_{0}(\Omega)).
\end{align*}
In particular, the triple $( \phip(t),\phid(t),q(t) )$ remains bounded
in $H^2(\Omega)\times H^2(\Omega)\times H^1_0(\Omega)$, and actually (strongly) converges
in the same space to a limit $( \phip(T_m),\phid(T_m),q(T_m) )$, as $t\nearrow T_m$.
This allows us to restart
the system by taking $\phip(T_{m})$, $\phid(T_{m})$ and $q(T_m)$ as new ``initial'' data
(note that the other equations of the system have a quasi-static nature; hence they do
not involve any initial data). To be precise, we should observe that we performed
the fixed point argument by assuming the initial condition $q(0)=0$, while we are restarting
the argument from $q(T_m)\not=0$. On the other hand, it is easy to realize that the
choice $q(0)=0$ was taken just for convenience (indeed, that initial datum will disappear
when taking the limit $\delta\to 0$) and the argument still works for any datum
in $H^1_0$ (as is $q(T_m)$). Hence, restarting from $T_m$ we get a new local solution which
is defined on an interval of the form $(T_m,T_m+\epsilon)$ for some $\epsilon>0$
and still enjoys the regularity properties detailed in Theorem~\ref{thm:reg:local}.
This contradicts the maximality of $T_m$. Hence $T_m=T$.


\subsection{Passing to the limit $\delta \to 0$}
We now pass to the limit $\delta \to 0$ to obtain a weak solution $(\phip^{\eps}, \mu_{p}^{\eps}, \phid^{\eps}, \mu_{d}^{\eps}, q^{\eps}, n^{\eps})$
defined over $(0,T)$ to the following problem:
\begin{subequations}\label{eps:Reg}
\begin{alignat}{3}
\label{eq:eps:pa1}
 \de_{t} \phip &= M_{p} \Laplace \mu_{p} + \div (T(\phip) \nabla q) + \div \big( T(\phip)^2 \nabla \mu_p + T(\phip) T(\phid) \nabla \mu_d \big) + S_{p},  \\
\label{eq:eps:pa2}
 \mu_{p} & = F_{\eps,p} (\phip, \phid) + F_{1,p} (\phip, \phid) - \Laplace \phip,\\
\label{eq:eps:da1}
 \de_{t} \phid & = M_{d} \Laplace \mu_{d} + \div (T(\phid) \nabla q) + \div \big( T(\phip) T(\phid) \nabla \mu_p + T(\phid)^2 \nabla \mu_d \big) + S_{d},  \\
\label{eq:eps:da2}
\mu_{d} & =  F_{\eps,d} (\phip, \phid) + F_{1,d} (\phip, \phid) - \Laplace \varphi_{d},\\
\label{eq:eps:Spa}
 S_{p} & = \Sigma_p(n, \phip ,\phid) + m_{pp} \phip + m_{pd} \phid, \\
\label{eq:eps:Sda}
 S_{d} & = \Sigma_d(n,\phip, \phid) + m_{dp} \phip + m_{dd} \phid, \\
\label{eq:eps:qa}
0& = \Delta q + \div \big( T(\phip) \nabla  \mu_{p} + T(\phid) \nabla \mu_{d} \big) + S_p + S_d,\\
\label{eq:eps:na}
 0 & = -\Laplace n + T(\phip) n,
\end{alignat}
\end{subequations}
furnished with the initial and boundary conditions
\begin{subequations}
\begin{alignat}{3}
\phip(0) = \varphi_{p,0}, \quad \phid(0) = \varphi_{d,0} & \text{ in } \Omega, \label{eq:eps:bc:1} \\
M_{i} \pdnu \mu_{i} + T(\varphi_{i}) ( \nabla q + T(\phip) \nabla \mu_{p} + T(\phid) \nabla \mu_{d}) \cdot \norma = 0,
\quad \pdnu \varphi_{i} = 0 & \text{ on } \Gamma, \label{eq:eps:bc:2} \\
n = 1, \quad q = 0 & \text{ on } \Gamma. \label{eq:eps:bc:3}
\end{alignat}
\end{subequations}
Note that in \eqref{eps:Reg} the regularized convex part $F_{\eps}$ of the potential $F$ is still present.
\begin{thm}\label{thm:reg:eps}
 Let Assumption~\ref{ass:pd} hold, let $\eps \in (0,1)$ and $\delta \in (0,1)$,
 and let $F_{\eps} : \RR^{2} \to [0,+\infty)$ be the Moreau-Yosida approximation of $F_0$
 as detailed in Sec.~\ref{sec:scheme}. Let also $\varphi_{i,0,\delta} \in H^{2}_{n}(\Omega)$
 be the unique solution to \eqref{Initial:data:Neu}.
 Then, there exists $\delta_{0} > 0$ such that for all $\delta < \delta_{0}$, the weak solution
 $(\phip^{\delta,\eps}, \mu_{p}^{\delta,\eps}, \phid^{\delta,\eps}, \mu_{d}^{\delta,\eps}, q^{\delta,\eps}, n^{\delta,\eps})$
 to \eqref{delta:eps:Reg} defined on $[0,T]$ and obtained from Theorem \ref{thm:reg:local} satisfies the following properties:
 \begin{enumerate}
\item[\bf(1)] there exist functions $(\phip^{\eps}, \mu_{p}^{\eps}, \phid^{\eps}, \mu_{d}^{\eps}, q^{\eps}, n^{\eps})$ such that for $i = p,d$ and any $r < \infty$ in two dimensions and any $r \in [1,6)$ in three dimensions,
\begin{equation*}
\begin{alignedat}{3}
\varphi_{i}^{\delta,\eps} & \to \varphi_{i}^{\eps} && \text{ weakly* in } L^{\infty}(0,T;H^{1}(\Omega)) \cap L^{2}(0,T;H^{2}(\Omega)) \cap H^{1}(0,T;H^{1}(\Omega)'), \\
\varphi_{i}^{\delta,\eps} & \to \varphi_{i}^{\eps} && \text{ strongly in } C^{0}([0,T];L^{r}(\Omega)) \cap L^{2}(0,T;W^{1,r}(\Omega)) \text{ and a.e. in } Q, \\
\mu_{i}^{\delta,\eps} & \to \mu_{i}^{\eps} && \text{ weakly in } L^{2}(0,T;H^{1}(\Omega)), \\
q^{\delta,\eps} & \to q^{\eps} && \text{ weakly in } L^{2}(0,T;H^{1}(\Omega)), \\
n^{\delta,\eps} & \to n^{\eps} && \text{ weakly* in } L^{\infty}(0,T;W^{2,r}(\Omega)) \text{ and strongly in } L^{2}(0,T;H^{1}(\Omega)).
\end{alignedat}
\end{equation*}
\item[\bf(2)] The tuple $(\phip^{\eps}, \mu_{p}^{\eps}, \phid^{\eps}, \mu_{d}^{\eps}, q^{\eps}, n^{\eps})$
satisfies equations \eqref{eq:eps:pa2}, \eqref{eq:eps:da2}, \eqref{eq:eps:Spa}, \eqref{eq:eps:Sda}, \eqref{eq:eps:na}
a.e.~in $Q$, whereas equations \eqref{eq:eps:pa1}, \eqref{eq:eps:da1} and \eqref{eq:eps:qa} hold for a.e.~$t \in (0,T)$ in the following weak sense:
\begin{align*}
0 & = \langle \de_{t} \phip^{\eps}, \zeta \rangle + \int_{\Omega} \big ( M_{p} \nabla \mu_{p}^{\eps} + T(\phip^{\eps}) \big ( \nabla q^{\eps} + T(\phip^{\eps}) \nabla \mu_{p}^{\eps} + T(\phid^{\eps}) \nabla \mu_{d}^{\eps} \big ) \big ) \cdot \nabla \zeta - S_{p} \zeta \, dx, \\
0 & = \langle \de_{t} \phid^{\eps}, \zeta \rangle + \int_{\Omega} \big ( M_{d} \nabla \mu_{d}^{\eps} + T(\phid^{\eps}) \big ( \nabla q^{\eps} + T(\phid^{\eps}) \nabla \mu_{d}^{\eps} + T(\phip^{\eps}) \nabla \mu_{p}^{\eps} \big ) \big ) \cdot \nabla \zeta - S_{d} \zeta \, dx, \\
0 & = \int_{\Omega} \big ( \nabla q^{\eps} + T(\phip^{\eps}) \nabla \mu_{p}^{\eps} + T(\phid^{\eps}) \nabla \mu_{d}^{\eps} \big ) \cdot \nabla \xi - (S_{p} + S_{d}) \xi \, dx
\end{align*}
for all $\zeta \in H^{1}(\Omega)$ and $\xi \in H^{1}_{0}(\Omega)$.
Moreover, $0 \leq n^{\eps} \leq 1$ a.e. in $Q$, and $\varphi_{i}^{\eps}(0) = \varphi_{0,i}$ a.e. in $\Omega$.
\end{enumerate}
\end{thm}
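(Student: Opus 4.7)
The proof hinges on the $\delta$-uniform estimates \eqref{Unif:delta:1}--\eqref{Unif:delta:7}. The plan is to extract weak limits along a subsequence $\delta = \delta_k \to 0$, upgrade these to strong convergences wherever the nonlinearities demand it, and verify that the $\delta$-dependent regularizing terms in \eqref{delta:eps:Reg} disappear in the limit. The first step, weak compactness, follows immediately: reflexivity (and Banach--Alaoglu) applied to the bounds on $\phi_i^{\delta,\eps},\mu_i^{\delta,\eps},q^{\delta,\eps},n^{\delta,\eps}$ in the spaces listed in item (1) of the theorem yields, along a non-relabelled subsequence, all the weak or weak-$*$ convergences claimed there.

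For strong compactness of the phase variables, note that $\partial_t\phi_i^{\delta,\eps}$ is bounded in $L^2(0,T;H^1(\Omega)')$ by \eqref{Unif:delta:5a} while $\phi_i^{\delta,\eps}$ is bounded in $L^\infty(0,T;H^1(\Omega)) \cap L^2(0,T;H^2(\Omega))$ by \eqref{Unif:delta:2}, \eqref{Unif:delta:4}. The Aubin--Lions lemma with the compact embeddings $H^1(\Omega) \hookrightarrow\hookrightarrow L^r(\Omega)$ and $H^2(\Omega) \hookrightarrow\hookrightarrow W^{1,r}(\Omega)$ (for the admissible ranges of $r$) therefore delivers strong convergence of $\phi_i^{\delta,\eps}$ in $C^0([0,T];L^r(\Omega)) \cap L^2(0,T;W^{1,r}(\Omega))$ and, up to a further subsequence, a.e. on $Q$. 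In particular, since $T$ is Lipschitz and bounded, $T(\phi_i^{\delta,\eps})$ converges strongly in $L^p(Q)$ for every $p<\infty$ and pointwise a.e. Strong convergence of $n^{\delta,\eps}$ in $L^2(0,T;H^1(\Omega))$ follows by the same Cauchy-sequence argument used in \eqref{Cauchy}: subtracting \eqref{eq:na} written for indices $a,b$, testing with $n^{\delta_a,\eps}-n^{\delta_b,\eps}$ (which vanishes on $\Gamma$), discarding the nonnegative term and invoking Poincar\'e's inequality yields $\|\nabla(n^{\delta_a,\eps}-n^{\delta_b,\eps})\|_{L^2(Q)} \leq C \|T(\phi_p^{\delta_a,\eps})-T(\phi_p^{\delta_b,\eps})\|_{L^2(Q)}$, and the right-hand side is Cauchy by the strong convergence of $\phi_p^{\delta,\eps}$.

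The $\delta$-singular terms vanish in the limit: by \eqref{Unif:delta:2} we have $\sqrt{\delta}\,\|\nabla\partial_t\phi_i^{\delta,\eps}\|_{L^2(Q)} \leq C_\eps$, so $\delta\nabla\partial_t\phi_i^{\delta,\eps} = \sqrt{\delta}\cdot\sqrt{\delta}\nabla\partial_t\phi_i^{\delta,\eps} \to 0$ in $L^2(Q)$, which handles the term $-\delta\Delta\partial_t\phi_i$ in \eqref{eq:pa2}, \eqref{eq:da2} after integrating by parts against the no-flux test function. For the pressure equation, the term $\delta\Delta^2 q^{\delta,\eps}$ tested against $\xi\in H^1_0(\Omega)$ becomes $\delta\int\nabla\Delta q^{\delta,\eps}\cdot\nabla\xi\,dx$; moving the spatial derivatives further onto $\xi$ (twice) produces $\delta\int q^{\delta,\eps}\Delta\mathrm{div}(\nabla\xi)\,dx$ for smooth $\xi$, which tends to zero thanks to the $L^2(Q)$ bound on $q^{\delta,\eps}$ from \eqref{Unif:delta:2}; by density this passage extends to all $\xi\in H^1_0(\Omega)$. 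The term $\delta\partial_t q^{\delta,\eps}$ is treated by time-integrating against $\psi(t)\xi$ with $\psi\in C^\infty_c(0,T)$, transferring $\partial_t$ onto $\psi$ and using $q^{\delta,\eps}(0)=0$ to kill boundary terms, so that only $\delta\int_0^T\psi'\langle q^{\delta,\eps},\xi\rangle\,dt \to 0$ remains.

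All remaining nonlinear terms pass to the limit by the principle \emph{strong times weak}: the truncated factors $T(\phi_i^{\delta,\eps}), T(\phi_i^{\delta,\eps})^2, T(\phi_p^{\delta,\eps})T(\phi_d^{\delta,\eps})$ converge strongly in $L^p(Q)$ for every $p<\infty$ and are uniformly bounded, while $\nabla\mu_i^{\delta,\eps}$ and $\nabla q^{\delta,\eps}$ converge weakly in $L^2(Q)$; hence the products converge in, say, $L^1(Q)$. The source terms $\Sigma_i(n^{\delta,\eps},\phi_p^{\delta,\eps},\phi_d^{\delta,\eps})$ together with $F_{\eps,i}, F_{1,i}$, being Lipschitz (resp. of at most linear growth) in their arguments, converge strongly in $L^2(Q)$ via the strong convergences of $n^{\delta,\eps}$ and $\phi_i^{\delta,\eps}$. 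The initial condition $\phi_i^\eps(0)=\phi_{i,0}$ follows from the embedding $H^1(0,T;H^1(\Omega)')\cap L^\infty(0,T;H^1(\Omega))\hookrightarrow C^0([0,T];L^2(\Omega))$ combined with the strong convergence $\phi_{i,0,\delta}\to\phi_{i,0}$ in $L^2(\Omega)$, which is a standard resolvent fact for \eqref{Initial:data:Neu}. I expect the most delicate bookkeeping to lie in justifying the vanishing of $\delta\partial_t q^{\delta,\eps}$ in the weak pressure formulation given only the modest $L^2(Q)$-bound on $q^{\delta,\eps}$; the integration-by-parts-in-time trick above, together with the initial datum $q^{\delta,\eps}(0)=0$, is precisely what makes this possible.
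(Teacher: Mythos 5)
Your proposal is correct and follows essentially the same route as the paper: extract weak/weak-$*$ limits from the $\delta$-uniform estimates \eqref{Unif:delta:1}--\eqref{Unif:delta:4}, upgrade the phase fields to strong convergence via Aubin--Lions (the paper cites Simon), treat the nutrient by a Cauchy argument in $L^2(0,T;H^1(\Omega))$, show the $\delta$-regularizing terms vanish using $q^{\delta,\eps}(0)=0$ and the $\sqrt{\delta}$- and $\delta$-weighted bounds, and pass to the limit in the nonlinear terms by strong-times-weak. Two small tactical differences, both valid: you obtain the nutrient-Cauchy property directly from $1$-Lipschitz continuity of $T$ and the strong $L^2(Q)$-convergence of $\varphi_p^{\delta,\eps}$, whereas the paper invokes Egorov's theorem via a.e.\ convergence; and you dispose of $\delta\Delta^2 q$ by testing against $\xi\in C^\infty_c(\Omega)$ and integrating by parts up to $\delta\int q\,\Delta^2\xi$, whereas the paper stops at $\delta\int\Delta q\,\Delta\xi$ with $\xi\in H^2(\Omega)\cap H^1_0(\Omega)$, using the bound $\sqrt{\delta}\|\Delta q^{\delta,\eps}\|_{L^2(Q)}\le C_\eps$ from \eqref{Unif:delta:5b} and the boundary condition $\Delta q=0$ on $\Gamma$.
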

\begin{proof}
Recalling the estimate \eqref{ic:delta:Unif:H1}, we immediately infer the following properties of the initial data $(\varphi_{p,0,\delta}, \varphi_{d,0,\delta})$:
\begin{align*}
\| \varphi_{p,0,\delta} \|_{H^{1}(\Omega)} + \| \varphi_{d,0,\delta} \|_{H^{1}(\Omega)} \leq C, \\
\varphi_{p,0,\delta} \to \varphi_{p,0}, \quad \varphi_{d,0,\delta} \to \varphi_{d,0} \text{ weakly in } H^{1}(\Omega) \text{ and  strongly in } L^{2}(\Omega).
\end{align*}
Furthermore, this choice of initial data for the regularized system \eqref{delta:eps:Reg} implies that the estimate \eqref{Unif:delta:2} is uniform in $\delta \in (0,\delta_{0})$.

Then, most of the weak/weak* convergence properties in the statement are directly deduced
from the uniform estimates \eqref{Unif:delta:1}, \eqref{Unif:delta:2},
\eqref{Unif:delta:3} and \eqref{Unif:delta:4}, while the strong convergences follow from using \cite[\S 8, Cor. 4]{Simon}.
On the other hand, the strong convergence of $n^{\delta,\eps}$ is proved, similarly as before, by
a Cauchy argument which we now sketch. Let (a small) $\eta > 0$ and (a large) $C_* > 0$ be given but otherwise arbitrary.
Then, thanks to the a.e.~convergence of $\phip^{\delta,\eps}$ to $\phip^{\eps}$ in $Q$, by Egorov's theorem there exists a
measurable subset $X_{\eta} \subset Q$ with $C_*|X_{\eta}| < \frac{1}{4} \eta$ and $\phip^{\delta,\eps} \to \phip^{\eps}$
uniformly in the complement $Q \setminus X_{\eta}$. By this uniform convergence, there
exists $\delta_{*} > 0$ such that for any two indices $0 < \delta_{1}, \delta_{2} < \delta_{*}$,
\begin{align*}
  C_* \int_{Q \setminus X_{\eta}} | T(\phip^{\delta_{1},\eps}) - T(\phip^{\delta_{2},\eps}) |^{2} \, dx \, dt < \frac{\eta}{2}.
\end{align*}
Then, following the computation in \eqref{Cauchy} and using the boundedness of $T$, we find that
\begin{align*}
& \| n^{\delta_{1},\eps} - n^{\delta_{2},\eps} \|_{L^{2}(0,T;H^{1}(\Omega))}^{2} \leq C_* \| T(\phip^{\delta_{1},\eps}) - T(\phip^{\delta_{2},\eps})\|_{L^{2}(0,T;L^{2}(\Omega))}^{2} \\
& \quad \leq C_* \int_{Q \setminus X_{\eta}} | T(\phip^{\delta_{1},\eps}) - T(\phip^{\delta_{2},\eps}) |^{2} \, dx \, dt
 + C_* \int_{X_{\eta}} | T(\phip^{\delta_{1},\eps}) - T(\phip^{\delta_{2},\eps}) |^{2} \, dx \, dt \\
& \quad < \frac{\eta}{2} + 2 C_* | X_{\eta} | < \eta,
\end{align*}
for $0 < \delta_{1},\delta_{2} < \delta_{*}$. Here $C_*$ is exactly the constant $C$ in \eqref{Cauchy}.
This shows that $\{n^{\delta,\eps}\}_{\delta \in (0,\delta_{*})}$ is a Cauchy sequence in $L^{2}(0,T;H^{1}(\Omega))$.
The property $0 \leq n^{\eps} \leq 1$ a.e. in $Q$ can be deduced also from a weak comparison principle.

Now passing to the limit $\delta \to 0$ in \eqref{eq:Spa}, \eqref{eq:Sda}, \eqref{eq:na} lead to \eqref{eq:eps:Spa}, \eqref{eq:eps:Sda} and \eqref{eq:eps:na}, respectively.
Let us fix $\zeta \in L^{2}(0,T;H^{1}(\Omega))$ and test \eqref{eq:pa2} with $\zeta$. Then,
\begin{align*}
\int_{0}^{T} \int_{\Omega} \big ( \mu_{p}^{\delta,\eps} + \Delta \phip^{\delta,\eps} - (F_{\eps,p}+ F_{1,p})(\phip^{\delta,\eps}, \phid^{\delta,\eps}) \big ) \zeta - \delta \nabla \de_{t} \phip^{\delta,\eps} \cdot \nabla \zeta \, dx \, dt = 0.
\end{align*}
Using the weak convergences of $\mu_{p}^{\delta,\eps}$, $\Delta \phip^{\delta,\eps}$ in $L^{2}(0,T;L^{2}(\Omega))$ and the Lipschitz continuity of $F_{\eps,p}$ and $F_{1,p}$, as well
as the boundedness $\| \sqrt{\delta} \nabla \de_{t} \phip^{\delta,\eps} \|_{L^{2}(0,T;L^{2}(\Omega))} \leq C_{\eps}$
resulting from \eqref{Unif:delta:2}, passing to the limit $\delta \to 0$ in the above equality leads to
\begin{align*}
\int_{0}^{T} \int_{\Omega} \big ( \mu_{p}^{\eps} + \Delta \phip^{\eps} - (F_{\eps,p} + F_{1,p})(\phip^{\eps}, \phid^{\eps}) \big ) \zeta \, dx \, dt = 0.
\end{align*}
Since the above identity holds for arbitrary $\zeta \in L^{2}(0,T;H^{1}(\Omega))$ and all the terms in the integrand belong to $L^{2}(0,T;L^{2}(\Omega))$, the fundamental lemma of calculus of variations then yields \eqref{eq:eps:pa2}.

In a similar fashion, we infer from testing \eqref{eq:pa1} with an arbitrary test function $\zeta \in L^{2}(0,T;H^{1}(\Omega))$ and then passing to the limit $\delta \to 0$ the identity
\begin{align*}
0 & = \int_{0}^{T} \langle \de_{t} \phip^{\eps}, \zeta \rangle \, dt - \int_{0}^{T} \int_{\Omega} S_{p}(n^{\eps}, \phip^{\eps}, \phid^{\eps}) \zeta \, dx \, dt \\
& \quad + \int_{0}^{T} \int_{\Omega}  \big ( M_{p} \nabla \mu_{p}^{\eps} + T(\phip^{\eps}) \big ( \nabla q^{\eps} + T(\phip^{\eps}) \nabla \mu_{p}^{\eps} + T(\phid^{\eps}) \nabla \mu_{d}^{\eps} \big ) \big ) \cdot \nabla \zeta \, dx \, dt,
\end{align*}
where in the above we used the strong $L^2$-convergences of $n^{\delta,\eps}$ and $\varphi_{i}^{\delta,\eps}$
with the generalized Lebesgue dominated convergence theorem and the assumption \eqref{struct:Sig}
to deduce that $S_{p}(n^{\delta,\eps}, \phip^{\delta,\eps}, \phid^{\delta,\eps})$ converges to $S_{p}(n^{\eps}, \phip^{\eps}, \phid^{\eps})$
strongly in $L^{2}(0,T;L^{2}(\Omega))$. Furthermore, by the continuity and boundedness of $T$, it is easy to see that
\begin{align*}
  T(\phip^{\delta,\eps}) \to T(\phip^{\eps}) \text{ weakly}* \text{ in $L^\infty(Q)$ and strongly in $L^p(Q)$ for all $p\in[1,\infty)$.}
\end{align*}
Moreover, the strong convergence of the initial
data $\varphi_{p,0,\delta}$ to $\varphi_{p,0}$ in $L^{2}(\Omega)$ and the strong convergence of $\phip^{\delta,\eps}$ to $\phip^{\eps}$ in $C^{0}([0,T];L^{2}(\Omega))$
yield $\phip^{\eps}(0) = \varphi_{p,0}$ as an equality in $L^{2}(\Omega)$.

Lastly, it remains to pass to the limit in \eqref{eq:qa}. Consider testing \eqref{eq:qa} with the product $\eta(t) \xi(x)$ for arbitrary
test functions $\eta \in C^{1}(0,T)$ with $\eta(T) = 0$ and $\xi \in H^{2}(\Omega) \cap H^{1}_{0}(\Omega)$, then we have
\begin{align*}
0 & = \int_{0}^{T} \int_{\Omega} - \delta q^{\delta,\eps} \xi \de_{t} \eta -  (S_{p} + S_{d})(n^{\delta,\eps}, \phip^{\delta,\eps}, \phid^{\delta,\eps}) \eta(t) \xi \, dx \, dt \\
& \quad + \int_{0}^{T} \eta(t) \int_{\Omega} \big ( \nabla q^{\delta,\eps} + T(\phip^{\delta,\eps}) \nabla \mu_{p}^{\delta,\eps} + T(\phid^{\delta,\eps}) \nabla \mu_{d}^{\delta,\eps} \big ) \cdot \nabla \xi + \delta \Delta q^{\delta,\eps} \cdot \Delta \xi \, dx \, dt.
\end{align*}
Thanks to $\| q^{\delta,\eps} \|_{L^{2}(0,T;L^{2}(\Omega))} \leq C_{\eps}$ from \eqref{Unif:delta:2} and $\sqrt{\delta} \| \Delta q^{\delta,\eps} \|_{L^{2}(0,T;L^{2}(\Omega))} \leq C_{\eps}$ from \eqref{Unif:delta:5b}, after passing to the limit we obtain
\begin{align*}
0 = \int_{0}^{T} \eta(t) \int_{\Omega} \big ( \nabla q^{\eps} + T(\phip^{\eps}) \nabla \mu_{p}^{\eps}
+ T(\phid^{\eps}) \nabla \mu_{d}^{\eps} \big ) \cdot \nabla \xi - (S_{p} + S_{d})(n^{\eps}, \phip^{\eps}, \phid^{\eps}) \xi \, dx \, dt,
\end{align*}
holding for all $\xi \in H^{2}(\Omega) \cap H^{1}_{0}(\Omega)$ and $\eta \in C^{1}(0,T)$ with $\eta(T) = 0$.  Using the density of $H^{2}(\Omega) \cap H^{1}_{0}(\Omega)$ in $H^{1}_{0}(\Omega)$ and the fundamental lemma of calculus of variations, we obtain the weak formulation of \eqref{eq:eps:qa} as stated in Theorem \ref{thm:reg:eps}.
\end{proof}


\section{Alternative proof via Galerkin approximation}\label{sec:Galerkin}

We prove here existence of a solution $(\phip^{\eps}, \mu_{p}^{\eps}, \phid^{\eps}, \mu_{d}^{\eps}, q^{\eps}, n^{\eps})$ to \eqref{eq:eps:pa1}-\eqref{eq:eps:na} with the initial and boundary
conditions \eqref{eq:eps:bc:1}-\eqref{eq:eps:bc:3} by means of the alternative Faedo-Galerkin argument. This can be stated as follows:
\begin{thm}\label{thm:Galerkin}
Let Assumption \ref{ass:pd} hold, let $\eps \in (0,1)$ and let $F_{\eps} : \RR^{2} \to [0,+\infty)$ be the Moreau-Yosida approximation of $F_0$
 as detailed in Sec.~\ref{sec:scheme}.  Then, there exists a tuple $(\phip^{\eps}, \mu_{p}^{\eps}, \phid^{\eps}, \mu_{d}^{\eps}, q^{\eps}, n^{\eps})$
 satisfying assertion~{\bf (2)} of Theorem \ref{thm:reg:eps}.  Furthermore, the following energy identity also holds:
\begin{equation}\label{Galerkin:Ener:Id}
\begin{aligned}
& \frac{d}{dt} \Big ( \int_{\Omega} (F_{\eps} + F_{1})(\phip^{\eps}, \phid^{\eps}) + \frac{1}{2} \big ( | \nabla \phip^{\eps} |^{2} + | \nabla \phid^{\eps} |^{2} \big ) \, dx \Big ) \\
& \qquad + M_{p} \| \nabla \mu_{p}^{\eps} \|_{L^{2}(\Omega)}^{2} + M_{d} \| \nabla \mu_{d}^{\eps} \|_{L^{2}(\Omega)}^{2} + \| \nabla q^{\eps} + T(\phip^{\eps}) \nabla \mu_{p}^{\eps} + T(\phid^{\eps}) \nabla \mu_{d}^{\eps} \|_{L^{2}(\Omega)}^{2} \\
& \quad = \int_{\Omega} (S_{p} + S_{d}) q^{\eps} + S_{p} \mu_{p}^{\eps} + S_{d} \mu_{d}^{\eps} \, dx.
\end{aligned}
\end{equation}
\end{thm}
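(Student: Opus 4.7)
The strategy is a Faedo--Galerkin scheme in which only the evolutionary unknowns $\phip$, $\phid$ are discretized, while $\mu_p^\eps, \mu_d^\eps, q^\eps, n^\eps$ are reconstructed at every time from elliptic problems. Let $\{w_j\}_{j\in\N}$ denote the $L^2(\Omega)$-orthonormal, $H^1(\Omega)$-orthogonal basis of eigenfunctions of $-\Delta$ with homogeneous Neumann boundary conditions, put $V_N:=\mathrm{span}\{w_1,\dots,w_N\}$ and let $P_N$ be the $L^2$-orthogonal projection onto $V_N$. I look for $\phip^N(t)=\sum_{j=1}^N a_j^p(t)\,w_j$ and $\phid^N(t)=\sum_{j=1}^N a_j^d(t)\,w_j$, starting from $\phip^N(0)=P_N\varphi_{p,0}$ and $\phid^N(0)=P_N\varphi_{d,0}$; I define $\mu_i^N\in V_N$ as the $L^2$-projection of $F_{\eps,i}(\phip^N,\phid^N)+F_{1,i}(\phip^N,\phid^N)-\Delta\varphi_i^N$, determine $n^N\in 1+H^1_0(\Omega)$ as the unique (maximum-principle bounded) solution of $-\Delta n^N+T(\phip^N)n^N=0$, and $q^N\in H^1_0(\Omega)$ as the unique solution of the elliptic equation with right-hand side $\mathrm{div}(T(\phip^N)\nabla\mu_p^N+T(\phid^N)\nabla\mu_d^N)+S_p^N+S_d^N$. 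The Galerkin counterpart of \eqref{eq:eps:pa1}--\eqref{eq:eps:da1}, tested against $V_N$, is an ODE system for $(a_j^p,a_j^d)$ whose right-hand side is locally Lipschitz thanks to the global Lipschitz regularity of $\nabla F_\eps$, $\nabla F_1$, $T$, $\Sigma_i$, and to the continuous dependence of $n^N,q^N$ on $(\phip^N,\phid^N)$. The Cauchy--Lipschitz theorem therefore furnishes a unique local solution.

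Testing the Galerkin $\phip^N$-equation by $\mu_p^N\in V_N$ and the defining relation for $\mu_p^N$ by $\de_t\phip^N\in V_N$, doing the same on the $\phid$-component, and then adding the $q^N$-equation tested with $q^N\in H^1_0(\Omega)$, yields \emph{at the Galerkin level} the exact counterpart of \eqref{Galerkin:Ener:Id}. Combined with the additional testings of the kind already used in Subsection~3.4 above to control the mean values of $\varphi_i^N,\mu_i^N,q^N$, and a Gronwall argument, this produces $N$-uniform bounds of the same type as \eqref{Unif:delta:1}--\eqref{Unif:delta:5a} but with $\delta=0$, which both allow global-in-time extension of the Galerkin solutions and the passage to the limit $N\to\infty$. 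The limit procedure goes through in complete analogy with the proof of Theorem~\ref{thm:reg:eps}: weak/weak$^*$ compactness in the reflexive spaces where uniform bounds are available, Aubin--Lions--Simon for strong convergence of $\varphi_i^N$ in $C^0([0,T];L^r(\Omega))\cap L^2(0,T;W^{1,r}(\Omega))$, a Cauchy-sequence argument of the type \eqref{Cauchy} for $n^N$, continuity and boundedness of $T$ and $\Sigma_i$ to deal with the nonlinear products $T(\phip^N)\nabla\mu_i^N$ and the source terms, and the convergence $P_N\varphi_{i,0}\to\varphi_{i,0}$ in $H^1(\Omega)$ together with the embedding $H^1(0,T;H^1(\Omega)')\cap L^\infty(0,T;H^1(\Omega))\hookrightarrow C^0([0,T];L^2(\Omega))$ to secure attainment of the initial conditions. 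This delivers assertion~{\bf(2)} of Theorem~\ref{thm:reg:eps}.

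The delicate point is the recovery of the energy \emph{identity} \eqref{Galerkin:Ener:Id} for the limit, since weak lower semicontinuity alone only yields an inequality. Rather than trying to promote the Galerkin identity in the limit, I would derive \eqref{Galerkin:Ener:Id} directly from the weak formulation of the limit system: the regularities $\varphi_i^\eps\in L^\infty(0,T;H^1(\Omega))\cap L^2(0,T;H^2(\Omega))\cap H^1(0,T;H^1(\Omega)')$, $\mu_i^\eps\in L^2(0,T;H^1(\Omega))$, together with the pointwise relation $\mu_i^\eps=-\Delta\varphi_i^\eps+F_{\eps,i}(\phip^\eps,\phid^\eps)+F_{1,i}(\phip^\eps,\phid^\eps)$ and the $C^{1,1}$ regularity of $F_\eps+F_1$, justify the classical chain rule
\[
\frac{d}{dt}\int_{\Omega}\Bigl[\tfrac12|\nabla\phip^\eps|^2+\tfrac12|\nabla\phid^\eps|^2+(F_\eps+F_1)(\phip^\eps,\phid^\eps)\Bigr]dx=\langle\de_t\phip^\eps,\mu_p^\eps\rangle+\langle\de_t\phid^\eps,\mu_d^\eps\rangle,
\]
with the duality pairings taken between $H^1(\Omega)'$ and $H^1(\Omega)$. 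Testing the weak forms of \eqref{eq:eps:pa1}, \eqref{eq:eps:da1} with $\mu_p^\eps,\mu_d^\eps$, respectively, and of \eqref{eq:eps:qa} with $q^\eps$, then adding the resulting relations, cancels the cross terms and produces exactly \eqref{Galerkin:Ener:Id}. This closes the proof; as a by-product, one also recovers strong convergence of $\nabla\mu_i^N$ and $\nabla q^N$ in $L^2(Q)$ from the norm convergence ensured by comparing the Galerkin and limit identities.
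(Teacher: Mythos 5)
Your proof is substantially correct and close in spirit to the paper's, but there are two nontrivial deviations worth flagging.

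First, you run a \emph{semi}-Galerkin scheme: only $\phip^N,\phid^N$ (and their $L^2$-projected chemical potentials) live in $V_N$, while $q^N$ and $n^N$ are taken as exact solutions of the corresponding elliptic problems in $H^1_0(\Omega)$. The paper instead fully discretizes $q$ and $n-1$ in a Dirichlet eigenbasis. Your route is legitimate and avoids the finite-dimensional algebraic system \eqref{algsy} and the positive-definiteness argument for the matrix $\mathbb{A}$; the price is that your ``ODE'' for the Galerkin coefficients has an infinite-dimensional elliptic solve inside its right-hand side, so the Cauchy--Lipschitz step must be carried out with the continuity of the solution maps $(\phip^N,\phid^N)\mapsto(n^N,q^N)$ in the correct topologies made explicit (in particular, Lipschitz dependence of the $H^1_0$-solution of $-\Delta n+T(\phip)n=0$ on $\phip$ in $L^\infty$, which you obtain once elliptic regularity puts $\phip^N$ in $C^0(\overline\Omega)$). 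The paper's fully discrete version is constructive and avoids this technicality at the cost of a slightly longer inversion argument for the nutrient coefficients.

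Second, your derivation of the energy identity \eqref{Galerkin:Ener:Id} rightly avoids trying to pass the Galerkin identity to the limit, but the justification of the chain rule is stated too loosely. You write that the $C^{1,1}$ regularity of $F_\eps+F_1$ ``justifies the classical chain rule''. The issue is not the smoothness of $F$ but the low time regularity $\de_t\varphi_i^\eps\in L^2(0,T;H^1(\Omega)')$: the quadratic gradient term $\tfrac12\|\nabla\varphi_i^\eps\|^2$ does \emph{not} admit a naive chain rule against a time derivative that only lives in $H^1(\Omega)'$, because $-\Delta\varphi_i^\eps$ alone is only in $L^2(Q)$. What rescues the identity is that the \emph{combination} $-\Delta\varphi_i^\eps+F_{\eps,i}(\phip^\eps,\phid^\eps)$ is recognized as $\partial\mathcal G(\phip^\eps,\phid^\eps)$ for the convex lower semicontinuous functional $\mathcal G(\phip,\phid)=\io F_\eps(\phip,\phid)+\tfrac12(|\nabla\phip|^2+|\nabla\phid|^2)\,dx$ on $L^2(\Omega)^2$ (Barbu), and that the Brezis chain rule for subdifferentials of convex functionals in Hilbert spaces then gives $\langle(\de_t\phip^\eps,\de_t\phid^\eps),\partial\mathcal G(\phip^\eps,\phid^\eps)\rangle=\tfrac{d}{dt}\mathcal G(\phip^\eps,\phid^\eps)$; the $F_1$ contribution splits off cleanly by Lipschitz continuity. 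You should spell this out rather than appeal to a ``classical chain rule'', because without the subdifferential identification the Dirichlet-energy piece of the time derivative is not directly defined. With this correction, your argument matches the paper's and the rest of the proposal (uniform bounds, Aubin--Lions, Cauchy argument for $n^N$, identification of the nonlinearities, initial data) is sound.
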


\begin{proof}
Let $\{w_{i}\}_{i \in \N}$ and $\{y_{i}\}_{i \in \N}$ denote the eigenfunctions of the Neumann-Laplacian operator $-\Delta + I$ and
of the Dirichlet-Laplacian operator $-\Delta$, respectively. It is then well-known that $\{w_{i}\}_{i \in \N}$ and $\{y_{i}\}_{i \in \N}$
may be normalized in such a way to form orthogonal bases of $H^{1}(\Omega)$ and $H^{1}_{0}(\Omega)$, respectively,
that are also orthonormal with respect to the $L^{2}(\Omega)$-scalar product.  Furthermore, elliptic regularity
then yields that $w_{i} \in H^{2}_{\norma}(\Omega)$ and $y_{i} \in H^{2}(\Omega) \cap H^{1}_{0}(\Omega)$ for all $i \in \N$.

For fixed $k \in \N$, we seek functions $\varphi_{p,k}, \mu_{p,k}, \varphi_{d,k}, \mu_{d,k} \in \mathrm{span}\{w_{1}, \dots, w_{k}\}$, $q_{k}, n_{k}-1 \in \mathrm{span}\{y_{1}, \dots, y_{k}\}$ which solve for all $1 \le j \le k$ and $i \in \{p,d\}$:
\begin{subequations}\label{ODE}
\begin{alignat}{3}
0 & = (\varphi_{i,k}', w_{j}) + (M_{i} \nabla \mu_{i,k}, \nabla w_{j}) - (T(\varphi_{i,k}) \vu_{k}, \nabla w_{j}) - (S_{i,k}, w_{j}),  \label{Gal:varphi} \\
0 & = (\mu_{i,k}, w_{j}) - (\nabla \varphi_{i,k}, \nabla w_{j}) - ((F_{\eps,i} + F_{1,i})(\varphi_{p,k}, \varphi_{d,k}), w_{k}), \label{Gal:mu} \\
\vu_{k} & = - \nabla q_{k} - T(\varphi_{p,k}) \nabla \mu_{p,k} - T(\varphi_{d,k}) \nabla \mu_{d,k}, \label{Gal:velo} \\
0 & = (\vu_{k}, \nabla y_{j}) + (S_{p,k} + S_{d,k}, y_{j}),  \label{Gal:div} \\
S_{i,k} & = S_{i}(n_{k}, \varphi_{p,k}, \varphi_{d,k}), \label{Gal:S} \\
0 & = (\nabla n_{k}, \nabla y_{j}) + (T(\varphi_{p,k}) n_{k}, y_{j}), \label{Gal:n}
\end{alignat}
\end{subequations}
furnished with the initial condition $\varphi_{i,k}(0) = \Pi_{k}(\varphi_{i,0})$, where $\Pi_{k}$ denotes the projection to the finite dimension subspace $\mathrm{span}\{w_{1}, \dots, w_{k}\}$ and $\varphi_{i,k}' = \frac{d}{dt} \varphi_{i,k}$.
Moreover, $(\cdot,\cdot)$ denotes the usual scalar product in $L^2(\Omega)$.
We claim that the above system \eqref{ODE} admits a (local in time) solution. Indeed, we have
\begin{align*}
&\varphi_{i,k}(t)=\sum_{j=1}^{k}\alpha_{i,j}^{k}(t)w_j, \quad \mu_{i,k}(t)=\sum_{j=1}^k\beta_{i,j}^{k}(t)w_j,\\
&q_k(t)=\sum_{j=1}^{k}\gamma_j^k(t) y_j, \quad n_k(t)=1+\sum_{j=1}^{k}\eta_j^k(t) y_j,
\end{align*}
and we set $\boldsymbol{\alpha}_{i}^k:=(\alpha_{i,1}^k,\dots,\alpha_{i,k}^k)$,
$\boldsymbol{\beta}_{i}^k:=(\beta_{i,1}^k,\dots,\beta_{i,k}^k)$, $i=p,d$, $\boldsymbol{\gamma}^k:=(\gamma_1^k,\dots,\gamma_k^k)$
and $\boldsymbol{\eta}^k:=(\eta_1^k,\dots,\eta_k^k)$.
Then, from \eqref{Gal:mu}
the $\boldsymbol{\beta}_{i}^k$ can be immediately expressed as
globally Lipschitz continuous functions of the $\boldsymbol{\alpha}_{i}^k$ ($i=p,d$).
From \eqref{Gal:div}, taking \eqref{Gal:velo} into account, we can also express the $\boldsymbol{\gamma}^k$
as globally Lipschitz continuous functions of the $\boldsymbol{\alpha}_{i}^k$ ($i=p,d$)
 and $\boldsymbol{\eta}^k$.
It is now possible to express the $\boldsymbol{\eta}^k$ in terms of the $\boldsymbol{\alpha}_{p}^k$
via the equation \eqref{Gal:n} (see \cite[\S 6]{GLNeumann}). Indeed, \eqref{Gal:n} leads to the following
algebraic system
\begin{align}
&\lambda_\ell\eta^k_\ell+\sum_{j=1}^{k}\big(T(\varphi_{p,k})y_j,y_\ell\big)\eta_j^k=-\big(T(\varphi_{p,k}),y_\ell\big)\,,
\qquad \ell=1,\cdots k\,,
\label{algsy}
\end{align}
where $\lambda_\ell$ are the eigenvalues of the Dirichlet-Laplacian operator.
We now check that
the matrix $\mathbb{A}:=(a_{jl})_{j,l=1\dots k}$ of entries
 $a_{jl}:=\lambda_\ell\delta_{jl}+\big(T(\varphi_{p,k})y_l,y_j\big)$ is invertible. To this aim, it
is enough to prove that $\mathbb{A}$ is positive definite. Take $\boldsymbol{\zeta}:=(\zeta_1,\dots,\zeta_k)\in \mathbb{R}^k$ and set
$\phi:=\sum_{j=1}^{k}\zeta_j\,y_j$. We have
\begin{align}
\sum_{j,l=1}^{k} a_{jl}\zeta_j\zeta_l=\Vert\nabla\phi\Vert_{L^2(\Omega)}^2+\big(T(\varphi_{p,k})\phi,\phi\big)
\geq\Vert\nabla\phi\Vert_{L^2(\Omega)}^2\geq\lambda_1\Vert\phi\Vert_{L^2(\Omega)}^2\,.
\end{align}
Therefore, since the $y_j$, $j=1,\dots k$, are linearly independent, we have
$\sum_{j,l=1}^{k} a_{jl}\zeta_j\zeta_l=0$ iff $\phi=0$ iff $\boldsymbol{\zeta}=\boldsymbol{0}$,
and this implies that $\mathbb{A}$ is positive definite and hence invertible, for each choice of the $\boldsymbol{\alpha}_{p}^k$.
Hence, by inverting system \eqref{algsy} we can express the $\boldsymbol{\eta}^k$ as globally Lipschitz continuous functions
of the $\boldsymbol{\alpha}_{p}^k$. This also entails that the $\boldsymbol{\gamma}^k$ are globally
Lipschitz continuous functions of the $\boldsymbol{\alpha}_{i}^k$ only.
From \eqref{Gal:varphi}, taking also \eqref{Gal:velo} into account, we are led to an
ODE system of $2k$ equations in the $2k$ unknowns $\alpha_{i,j}^k$ ($i=p,d$)
in normal form, with some initial conditions that can be deduced from $\varphi_{i,k}(0) = \Pi_{k}(\varphi_{i,0})$.
The Cauchy-Lipschitz theorem can be applied to this system
and we can therefore guarantee the existence of its unique maximal solution
$\boldsymbol{\alpha}_{i}^k\in C^1([0,T_k^\ast);\mathbb{R}^k)$,
for some $T_k^\ast\in (0,\infty]$. Finally, $\boldsymbol{\beta}_{i}^k$, $i=p,d$,
$\boldsymbol{\gamma}^k$ and $\boldsymbol{\eta}^k$ are obtained as well.
We have thus proven that the finite-dimensional problem \eqref{Gal:varphi}--\eqref{Gal:n},
endowed with the initial condition $\varphi_{i,k}(0) = \Pi_{k}(\varphi_{i,0})$,
has a (unique) maximal solution
$\boldsymbol{\alpha}_{i}^k,\boldsymbol{\beta}_{i}^k,\boldsymbol{\gamma}^k,\boldsymbol{\eta}^k
\in C^1([0,T_k^\ast);\mathbb{R}^k)$.

\smallskip

We now derive estimates that are uniform in $k$.
Multiplying \eqref{Gal:varphi} with $\beta_{i,j}^k$, 
 \eqref{Gal:mu} with $\alpha_{i,j}^k$, \eqref{Gal:velo} with $\vu_{k}$, \eqref{Gal:varphi} with $\alpha_{i,j}^k$,
 upon summing the ensuing identities (over $j=1\dots k$, and $i=p,d$) and arguing as for \eqref{Apri:1}, \eqref{Apri:2},
  we are led to an analogous inequality to \eqref{Apri:3} with $\delta$ set to be zero.  The source term involving $S_{p,k} \mu_{p,k} + S_{d,k} \mu_{d,k}$ can be handled as in \eqref{Apri:S:mu}, as due to the sublinear growth of $\nabla F_{\eps}$, $\nabla F_{1}$ one obtains that
\begin{align}\label{Galerkin:mean:mu}
| (\mu_{i,k})\OO | \leq C_{\eps} \big (1 + \| \varphi_{p,k}\|_{L^{2}(\Omega)} + \| \varphi_{d,k} \|_{L^{2}(\Omega)} \big ).
\end{align}
Meanwhile, for the source term involving $(S_{p,k} + S_{d,k}) q_{k}$, we observe from \eqref{Gal:S} that
\begin{align*}
\| S_{p,k} \|_{L^{2}(\Omega)} + \| S_{d,k} \|_{L^{2}(\Omega)} \leq C (1 + \| \varphi_{p,k} \|_{L^{2}(\Omega)} + \| \varphi_{d,k} \|_{L^{2}(\Omega)} )\,.
\end{align*}
Testing \eqref{Gal:velo} with $\nabla q_{k}$ and \eqref{Gal:div} with $\gamma_j^k$, summing
over $j=1\dots k$ and combining the resulting identities yields 
\begin{align*}
\| \nabla q_{k} \|_{L^{2}(\Omega)}^{2} \leq C \| S_{p,k} + S_{d,k} \|_{L^{2}(\Omega)} \| q_{k} \|_{L^{2}(\Omega)} + \big ( \| \nabla \mu_{p,k} \|_{L^{2}(\Omega)} + \| \nabla \mu_{d,k} \|_{L^{2}(\Omega)} \big ) \|\nabla q_{k} \|_{L^{2}(\Omega)}.
\end{align*}
Young's inequality and Poincar\'{e}'s inequality then give
\begin{align}\label{Galerkin:q:est}
\| q_k \|_{H^{1}(\Omega)} \leq C \big ( 1 + \| \varphi_{p,k} \|_{L^{2}(\Omega)} + \| \varphi_{d,k} \|_{L^{2}(\Omega)} + \| \nabla \mu_{p,k} \|_{L^{2}(\Omega)} + \| \nabla \mu_{d,k} \|_{L^{2}(\Omega)} \big ).
\end{align}
A further application of Young's inequality entails
\begin{align*}
 \int_{\Omega} |(S_{p,k} + S_{d,k}) q_{k}| \, dx & \leq \frac{M_{p}}{4} \| \nabla \mu_{p,k} \|_{L^{2}(\Omega)}^{2} + \frac{M_{d}}{4} \| \nabla \mu_{d,k} \|_{L^{2}(\Omega)}^{2} \\
 & \quad + C \big ( 1 + \| \varphi_{p,k} \|_{L^{2}(\Omega)}^{2} + \| \varphi_{d,k} \|_{L^{2}(\Omega)}^{2} \big ),
\end{align*}
and consequently we obtain the analogue to \eqref{Apri:4} with $\delta$ and $\kappa$ set to zero.  In particular,
 by employing \eqref{Galerkin:mean:mu} and the Poincar\'{e} inequality to control
 the sequence of $\mu_{i,k}$ in $L^{2}(0,T;L^{2}(\Omega))$, 
 and also using the convergences $\varphi_{i,k}(0)\to \varphi_{i,0}$ in $H^1(\Omega)$ as $k\to\infty$,
 for $i=p,d$ (cf.~(A4)), we first get $T_k^\ast=+\infty$ (notice that
 $\Vert\varphi_{i,k}(t)\Vert=|\boldsymbol{\alpha}_i^k(t)|$), and then we obtain the following uniform estimate
 which holds for any given $0<T<+\infty$,
\begin{equation}\label{Galerkin:1}
\begin{aligned}
& \| (F_{\eps} + F_{1})(\varphi_{p,k}, \varphi_{d,k}) \|_{L^{\infty}(0,T;L^{1}(\Omega))} + \| \varphi_{i,k} \|_{L^{\infty}(0,T;H^{1}(\Omega))}
\\
& + \| \mu_{i,k} \|_{L^{2}(0,T;H^{1}(\Omega))} + \| \vu_{k} \|_{L^{2}(0,T;L^{2}(\Omega))} 
\leq C_{\eps}\,.
\end{aligned}
\end{equation}
Taking the $L^2-$norm of $\nabla q_k$ in \eqref{Gal:velo}
and using the above estimates for $\vu_k,\mu_{i,k}$ yields
\begin{align}
  & \Vert q_k\Vert_{L^2(0,T;H^1_0(\Omega))}\leq C\,.\label{best7}
\end{align}
Using the sublinear growth of $\nabla F_{\eps}$, $\nabla F_{1}$ again, together with \eqref{Galerkin:1},
we get also
\begin{align}
  & \Vert (F_{\eps,i} + F_{1,i})(\varphi_{p,k}, \varphi_{d,k})\Vert_{L^\infty(0,T;L^2(\Omega))}\leq C\,,\qquad i=p,d\,.\label{best9}
\end{align}

Testing \eqref{Gal:mu} with the coefficients of $\Delta \varphi_{i,k}$, and using the sublinear growth of $\nabla F_{1}$, as well as the convexity of $F_{\eps}$, by means of a standard argument we infer the boundedness of $\Delta \varphi_{i,k}$ in $L^{2}(0,T;L^{2}(\Omega))$.  Elliptic regularity then yields
\begin{align}\label{Galerkin:3}
\| \varphi_{i,k} \|_{L^{2}(0,T;H^{2}(\Omega))} \leq C_{\eps}\,.
\end{align}
Moreover, by comparison in \eqref{Gal:varphi}, and by relying on the basic estimates \eqref{Galerkin:1},
a standard argument entails also a bound for the sequences of the time derivatives
 $\varphi_{p,k}^{\,\prime}$, $\varphi_{d,k}^{\,\prime}$, namely
\begin{align}\label{Galerkin:4}
\| \varphi_{i,k}^{\,\prime} \|_{L^{2}(0,T;(H^{1}(\Omega))')}  \leq C_{\eps}\,.
\end{align}
Concerning the estimate for the sequence of $n_k$,
 testing \eqref{Gal:n} with $\eta_j^k$ and summing over $j=1\dots k$, we get
 \begin{align*}
& \Vert\nabla n_k\Vert_{L^2(\Omega)}^2+\big(T(\varphi_{p,n})n_k,n_k\big)
=\big(T(\varphi_{p,n}),n_k\big)\,.
\end{align*}
 Using the boundedness and the nonnegativity of $T(\cdot)$ we immediately infer the bound
 \begin{align}
 &\Vert n_k\Vert_{L^\infty(0,T;H^1(\Omega))}\leq C\,.\label{best8}
 \end{align}
Thanks to estimates \eqref{Galerkin:1}--\eqref{best8}, 
 and to Aubin-Lions lemma,
we deduce there exist $\varphi_p$, $\varphi_d$, $\mu_p$, $\mu_d$, $\vu$, $q$ and $n$ in the
regularity class
\begin{align}
\varphi_p,\varphi_d & \in L^2(0,T;H^2(\Omega))\cap L^\infty(0,T;H^1(\Omega))
\cap H^1(0,T;H^1(\Omega)^{\,\prime})\,,\label{reg1}\\[1mm]
\mu_p,\mu_d &\in L^2(0,T;H^1(\Omega))\,,\label{reg2}\\[1mm]
\vu &\in L^2(0,T;L^2(\Omega)^3)\,,\label{reg3}\\[1mm]
 q &\in L^2(0,T;H^1_0(\Omega))\,,\label{reg4}\\[1mm]
n &\in 1+L^\infty(0,T;H^1_0(\Omega))\,,\label{reg5}
\end{align}
such that, for a not relabelled subsequence, we have
\begin{align}
\varphi_{i,k} \to \varphi_i\quad & \mbox{weakly} * \text{ in }L^2(0,T;H^2(\Omega))\cap H^1(0,T;H^1(\Omega)^{\,\prime}) \cap L^\infty(0,T;H^1(\Omega))\,,\label{conv1}\\[1mm]
\varphi_{i,k}\to\varphi_i\quad & \mbox{strongly in }L^2(0,T;H^{2-\sigma}(\Omega))\quad(\sigma>0)\,,\mbox{ and a.e. in }Q_T\,,\label{conv2}\\[1mm]
\mu_{i,k} \to \mu_i\quad &\mbox{weakly in }L^2(0,T;H^1(\Omega))\,,\label{conv3}\\[1mm]
 \vu_k \to \vu\quad & \mbox{weakly in }L^2(0,T;L^2(\Omega)^3)\,,\label{conv4}\\[1mm]
 q_k \to q\quad & \mbox{weakly in } L^2(0,T;H^1_0(\Omega))\,,\label{conv6}\\[1mm]
n_k \to n\quad & \mbox{weakly}^\ast\mbox{ in } L^\infty(0,T;H^1(\Omega))\,,\label{conv7}
\end{align}
Moreover, thanks to the continuity of $F_{\eps,i} + F_{1,i}$ and to \eqref{best9}, \eqref{conv2}, we also have
\begin{align}
 & (F_{\eps,i} + F_{1,i})(\varphi_{p,k}, \varphi_{d,k})\to (F_{\eps,i} + F_{1,i})(\varphi_{p}, \varphi_{d})
 \quad\mbox{weakly}* \mbox{ in }L^\infty(0,T;L^2(\Omega))\,.\label{conv8}
\end{align}

We now need to derive strong convergence for the sequence of $n_k$
(or at least for a subsequence), in order to pass to the limit
in \eqref{Gal:S}, by showing that $\{n_{k}\}_{k \in \N}$ is a Cauchy sequence in $L^{2}(0,T;H^{1}(\Omega))$.
Here we cannot directly adapt the argument of the proof of Theorem \ref{thm:reg:eps}, since here
a uniform $L^\infty$ bound for the Galerkin approximants $n_k$ seems not to be available from a weak comparison principle. However,
the argument is still simple. Indeed, after writing \eqref{Gal:n} for two different indexes $k$ and $\ell$, taking the difference,
multiplying the resulting identity by $\eta_j^k-\eta_j^\ell$ and summing over $j=1\dots k$, we obtain
\begin{align*}
 & \Vert\nabla(n_k-n_\ell)\Vert_{L^2(\Omega)}^2+\big(T(\varphi_{p,k})(n_k-n_\ell),n_k-n_\ell\big)
 =-\big((T(\varphi_{p,k})-T(\varphi_{p,\ell}))n_\ell,n_k-n_\ell\big)\,.
\end{align*}
Thanks to the nonnegativity of $T(\cdot)$ and to the Poincar\'{e} inequality, then there follows that
\begin{align*}
 \Vert\nabla(n_k-n_\ell)\Vert_{L^2(\Omega)}^2&\leq C\Vert(T(\varphi_{p,k})-T(\varphi_{p,\ell}))n_\ell\Vert_{L^2(\Omega)}^2
 \leq C\Vert\varphi_{p,k}-\varphi_{p,\ell}\Vert_{L^\infty(\Omega)}^2\Vert n_\ell\Vert_{L^2(\Omega)}^2\,.
\end{align*}
We now use
 the strong convergence \eqref{conv2}, which implies
that $\{\varphi_{p,k}\}$ is a Cauchy sequence in $L^2(0,T;L^\infty(\Omega))$ (by taking $\sigma>0$ small enough),
 to deduce that
$\{n_k\}$ is a Cauchy sequence in $L^2(0,T;H^1_0(\Omega))$ 
 and hence we get
\begin{align}\label{conv9}
   &n_k\to n\quad\mbox{strongly in }L^2(0,T;H^1(\Omega))\,.
\end{align}
The weak/strong convergences \eqref{conv1}--\eqref{conv9} are now enough
to pass to the limit in the approximate problem \eqref{Gal:varphi}--\eqref{Gal:n}
by means of a standard argument and prove that
the tuple $(\phip, \mu_{p}, \phid, \mu_{d}, q, n)$ satisfies assertion~{\bf (2)} of Theorem \ref{thm:reg:eps}.
In particular, from \eqref{Gal:n} we obtain in the limit that $(\nabla n,\nabla y)+\big(T(\varphi_p)n,y\big)=0$
for all $y\in H_0^1(\Omega)$ which is the weak form of \eqref{eq:n}. Then, the weak comparison principle and elliptic regularity
can now be applied, leading, respectively, to the pointwise bound $0\leq n\leq 1$ a.e in $\Omega\times(0,T)$
and to the further regularity $n\in L^\infty(0,T;W^{2,r}(\Omega))$ for every $r<\infty$.

To show that \eqref{Galerkin:Ener:Id} holds for the weak solution, it suffices to show that we have sufficient regularity to
test the equation for $\varphi_{i}^{\eps}$ with $\mu_{i}^{\eps}$, the equation for $\mu_{i}^{\eps}$ with $\de_{t} \varphi_{i}^{\eps}$,
the equation for $\vu^{\eps}$ with $\vu^{\eps}$, to sum the resulting relations and to integrate by parts.
This is actually possible, and the main technical detail lies in showing that
\begin{align}\label{Tech}
\sum_{i=p,d} \langle \de_{t} \varphi_{i}^{\eps}, \mu_{i}^{\eps} \rangle = \frac{d}{dt} \int_{\Omega} (F_{\eps} + F_{1})(\phip^{\eps}, \phid^{\eps}) + \frac{1}{2} ( | \nabla \phip^{\eps} |^{2} + | \nabla \phid^{\eps} |^{2} ) \, dx.
\end{align}
To see this, we define the convex and lower semicontinuous functional $\mathcal{G} : L^{2}(\Omega) \times L^{2}(\Omega) \to \mathbb{R} \cup \{+\infty \}$ by
\begin{align*}
\mathcal{G}(\phip, \phid) := \begin{cases}
\int_{\Omega} F_{\eps}(\phip, \phid) + \frac{1}{2} ( | \nabla \phip|^{2} + | \nabla \phid|^{2}) \, dx & \text{ if } (\phip, \phid) \in (H^{1}(\Omega))^{2} \\
& \text{ and } F_{\eps}(\phip, \phid) \in L^{1}(\Omega), \\
+\infty & \text{ otherwise}.
\end{cases}
\end{align*}
Then, by \cite[Proposition 2.8]{Ba} the subdifferential of $\mathcal{G}$ can be characterized as
\begin{align*}
\de \mathcal{G}(\phip, \phid) & = ( - \Delta \phip + F_{\eps,p} (\phip,\phid), - \Delta \phid + F_{\eps,d}(\phip, \phid) )
\end{align*}
for all $(\phip, \phid) \in D(\de \mathcal{G}) = (H^{2}_{n}(\Omega) )^{2}$. Moreover, thanks to the
Lipschitz continuity of $F_1$, we have
\begin{align}
\sum_{i=p,d} \langle \de_{t} \varphi_{i}^{\eps}, F_{1,i}(\phip^{\eps}, \phid^{\eps}) \rangle = \frac{d}{dt} \int_{\Omega} F_{1}(\phip^{\eps},\phid^{\eps}) \, dx,\label{chain}
\end{align}
whereas applying standard chain rule formulas for monotone operators in Hilbert spaces
(see \cite[Lemme~3.3, p.~73]{Brezis}, cf.~also \cite[Proposition 4.2]{CKRS}), we deduce that
\begin{align*}
\langle (\de_{t} \phip^{\eps}, \de_{t} \phid^{\eps}), \de \mathcal{G}(\phip^{\eps}, \phid^{\eps}) \rangle = \frac{d}{dt} \mathcal{G}(\phip^{\eps}, \phid^{\eps}).
\end{align*}
Using that $\mu_{i}^{\eps} = - \Delta \varphi_{i}^{\eps} + (F_{\eps,i} + F_{1,i})(\phip^{\eps}, \phid^{\eps})$ holds a.e. in $\Omega \times (0,T)$, the required assertion \eqref{Tech} is proved.
\end{proof}
\begin{remark}
It is worth observing that the Faedo-Galerkin approximants $\mu_{i,k}$, being linear combinations of the
Neumann eigenfunctions $\{w_j\}$, satisfy a no-flux condition on $\Gamma$, which is different with
respect to the coupled condition expected to hold in the limit (i.e., the first of \eqref{bc}).
On the other hand, this is not a contradiction in view of the fact that in the limit $k\to\infty$
we will recover the first of \eqref{bc} (in an implicit form) from the limit of
equations \eqref{Gal:varphi}. Indeed, $\mu_{i,k}$ converges to $\mu_i$ in $H^1(\Omega)$,
but not in $H^2(\Omega)$ (cf.~\eqref{conv3}). Then, as $H^2_{{\norma}}(\Omega)$ is 
dense in $H^1(\Omega)$, the no-flux condition may be (and in fact is) lost in the limit
and replaced by a different one.
\end{remark}

\section{Passing to the limit $\eps \to 0$}\label{sec:limit}

Let $(\phip^{\eps}, \mu_{p}^{\eps}, \phid^{\eps}, \mu_{d}^{\eps}, q^{\eps}, n^{\eps})$ denote a weak solution to \eqref{eps:Reg} obtained
either from Theorem~\ref{thm:reg:eps} or from Theorem~\ref{thm:Galerkin}.
Introducing the velocity variable as
$\vu^{\eps} := - \nabla q^{\eps} - T(\phip^{\eps}) \nabla \mu_{p}^{\eps} - T(\phid^{\eps}) \nabla \mu_{d}^{\eps}$,
we can now rewrite \eqref{eps:Reg} as
\begin{subequations}\label{eps:sys}
\begin{alignat}{3}
\label{eps:pa1}
 \de_{t} \phip^{\eps} &= M_{p} \Laplace \mu_{p}^{\eps} - \div (T(\phip^{\eps}) \vu^{\eps})  + S_{p},  \\
\label{eps:pa2}
 \mu_{p}^{\eps} & = F_{\eps,p} (\phip^{\eps}, \phid^{\eps}) + F_{1,p} (\phip^{\eps}, \phid^{\eps}) - \Delta \phip^{\eps},\\
\label{eps:da1}
 \de_{t} \phid^{\eps} & = M_{d} \Laplace \mu_{d}^{\eps} - \div (T(\phid^{\eps}) \vu^{\eps}) + S_{d},  \\
\label{eps:da2}
\mu_{d}^{\eps} & =  F_{\eps,d} (\phip^{\eps}, \phid^{\eps}) + F_{1,d} (\phip^{\eps}, \phid^{\eps}) - \Delta \phid^{\eps},\\
\label{eps:Spa}
 S_{p} & = \Sigma_p(n^{\eps}, \phip^{\eps} ,\phid^{\eps}) + m_{pp} \phip^{\eps} + m_{pd} \phid^{\eps}, \\
\label{eps:Sda}
 S_{d} & = \Sigma_d(n^{\eps},\phip^{\eps}, \phid^{\eps}) + m_{dp} \phip^{\eps} + m_{dd} \phid^{\eps}, \\
\label{eps:velo}
\vu^{\eps} & = - \nabla q^{\eps} - T(\phip^{\eps}) \nabla \mu_{p}^{\eps} - T(\phid^{\eps}) \nabla \mu_{d}^{\eps} , \\
\label{eps:qa}
\div \vu^{\eps}& = S_p + S_d,\\
\label{eps:na}
 0 & = -\Laplace n^{\eps} + T(\phip^{\eps}) n^{\eps},
\end{alignat}
\end{subequations}
furnished with the initial-boundary conditions \eqref{eq:eps:bc:1}-\eqref{eq:eps:bc:3} (in fact, the system is satisfied in
the weak form specified in the statement; nevertheless, it is probably clearer to report the equations in
their strong formulation).

The aim of this section is to derive uniform a priori estimates in $\eps$ and then pass to the limit $\eps \to 0$.  Let us point out that the estimate \eqref{Unif:delta:1} involving $n^{\eps}$ is already uniform in $\eps$.  For convenience, we will drop the superscript $\eps$ in the variables, and denote with the symbol $C$ positive constants that are independent of $\eps$.


\subsection{A priori estimates}
\label{subs:apr}

We will now derive a number of estimates uniform with respect to $\eps$. We start
controlling the mean values of $\phip$ and $\phid$.  Denoting
\begin{align*}
\bm{y}(t) := ((\phip)\OO(t), (\phid)\OO(t)), \quad (\bm{\Sigma})\OO = ((\Sigma_{p})\OO, (\Sigma_{d})\OO),
\end{align*}
then by testing \eqref{eps:pa1} and \eqref{eps:da1} with $1$ leads to the following system of ODE's:
\begin{align}\label{sist:mean}
\frac{d}{dt} \bm{y}(t) = (\bm{\Sigma})\OO(t) + \underline{\underline{M}} \bm{y}(t)
\end{align}
for any $0 \leq t \leq T$.  Thanks to \eqref{struct:Sig}, \eqref{struct:2} and \eqref{assump:initial<1} we infer that the vector
$\bm{y}(t) = ( (\phip)\OO(t), (\phid)\OO(t))$ belongs to the interior $\inte \Delta_{0}$ for all times $t \in [0,T]$.
Indeed, at the time $t = 0$, $\bm{y}(0) \in \inte \Delta_{0}$ by \eqref{assump:initial<1}.  Suppose
that there exists a time $t_{*}$ such that $\bm{y}(t_{*}) \in \de \Delta_{0}$.  Then, taking $t = t_{*}$
in the above ODE, multiplying with the outer unit normal $\norma$ to $\Delta_{0}$
and applying \eqref{struct:2}, we necessarily have that
\begin{align*}
\frac{d}{dt} \bm{y}(t_{*}) \cdot \norma < 0.
\end{align*}
As a consequence, $\bm{y}(t)\in\inte\Delta_0$ for $t$ in a right neighbourhood of $t_*$, whence it is
apparent that $\bm{y}(t)$ can never leave $\Delta_0$. From this we deduce that there exist positive constants $0 < c_{1} < c_{2} < 1$
independent of $\eps$ such that
\begin{align}\label{eps:Unif:1}
c_{1} \leq (\phip)\OO(t), (\phid)\OO(t) \leq c_{2}, \quad c_{1} \leq (\phip + \phid)\OO(t) \leq c_{2} \quad \forall t \in [0,T].
\end{align}
Testing now \eqref{eps:pa1} with $\mu_{p}$, \eqref{eps:da1} with $\mu_{d}$, \eqref{eps:pa2} with $\de_{t} \phip$, \eqref{eps:da2} with $\de_{t} \phid$, \eqref{eps:velo}
with $\vu$ and summing leads to
\begin{equation}\label{eps:main:1}
\begin{aligned}
& \frac{d}{dt} \int_{\Omega} F_{\eps}(\phip, \phid) + F_{1}(\phip, \phid) + \frac{1}{2} \big (|\nabla \phip |^{2} + | \nabla \phid |^{2} \big ) \, dx \\
& \qquad + M_{p} \| \nabla \mu_{p} \|_{L^{2}(\Omega)}^{2} + M_{d} \| \nabla \mu_{d} \|_{L^{2}(\Omega)}^{2} + \| \vu \|_{L^{2}(\Omega)}^{2} \\
& \quad = \int_{\Omega} S_{p} \mu_{p} + S_{d} \mu_{d} + q (S_{p} + S_{d}) \, dx.
\end{aligned}
\end{equation}
In the above we used Darcy's law and integration by parts to deduce that
\begin{align*}
\int_{\Omega} \big ( T(\phip) \nabla \mu_{p} + T(\phid) \nabla \mu_{d} \big ) \cdot \vu \, dx = \int_{\Omega} - \nabla q \cdot \vu - | \vu |^{2} \, dx = \int_{\Omega} q (S_{p} + S_{d}) - | \vu |^{2} \, dx.
\end{align*}
Let us now observe that, by the boundedness of $\Sigma_{p}$, we have
\begin{align*}
\int_{\Omega} S_{p} \mu_{p} \, dx & \leq C \| \mu_{p} - (\mu_{p})\OO \|_{L^{1}(\Omega)} + C | (\mu_{p}) \OO | + \sum_{i=p,d} \int_{\Omega} m_{pi} \varphi_{i}  (\mu_{p} - (\mu_{p})\OO + (\mu_{p})\OO) \, dx \\
& \leq C \| \mu_{p} - (\mu_{p})\OO \|_{L^{1}(\Omega)} + C | (\mu_{p}) \OO | + \sum_{i=p,d} \int_{\Omega} m_{pi} (\varphi_{i} - (\varphi_{i})\OO) (\mu_{p} - (\mu_{p})\OO) \, dx,\\
& \quad + (\mu_{p})\OO \int_{\Omega} m_{pp} \phip + m_{pd} \phid \, dx \\
& \leq C \| \mu_{p} - (\mu_{p})\OO \|_{L^{1}(\Omega)} + C | (\mu_{p}) \OO | + C \sum_{i=p,d} \| \nabla \varphi_{i} \|_{L^{2}(\Omega)} \|\nabla \mu_{p} \|_{L^{2}(\Omega)},
\end{align*}
where we have used that $(( \phip)\OO, (\phid)\OO)$ never leaves the set $\Delta_{0}$ and so $m_{pp} (\phip)\OO + m_{pd} (\phid)\OO$ is bounded.
An analogous estimate holds for $S_{d} \mu_{d}$, whence, by the Poincar\'{e} and Young inequalities, we obtain
\begin{equation}\label{eps:source:1}
\begin{aligned}
\left | \int_{\Omega} S_{p} \mu_{p} + S_{d} \mu_{d} \right | & \leq C \big ( | (\mu_{p})\OO | + |(\mu_{d})\OO| \big ) + \frac{M_{p}}{4} \| \nabla \mu_{p} \|_{L^{2}(\Omega)}^{2}  \\
& \quad + \frac{M_{d}}{4} \| \nabla \mu_{d} \|_{L^{2}(\Omega)}^{2} + C \big ( 1 + \| \nabla \phip \|_{L^{2}(\Omega)}^{2} + \| \nabla \phid \|_{L^{2}(\Omega)}^{2} \big ).
\end{aligned}
\end{equation}
For the term involving the pressure $q$, we have
\begin{align*}
\left | \int_{\Omega} (S_{p} + S_{d})q \, dx \right | \leq C_{\eta} \big (1 + \| \phip \|_{L^{2}(\Omega)}^{2} + \| \phid \|_{L^{2}(\Omega)}^{2} \big ) + \eta \| q \|_{L^{2}(\Omega)}^{2}
\end{align*}
for some positive constant $\eta$ to be fixed below. To get an $L^{2}$-estimate of the pressure,
we use the Poincar\'{e} inequality for $H^{1}_{0}(\Omega)$-functions and Darcy's law to deduce that
\begin{align}\label{eps:pressure}
\| q \|_{L^{2}(\Omega)}^{2} \leq C \| \nabla q \|_{L^{2}(\Omega)}^{2} \leq C \big ( \| \vu \|_{L^{2}(\Omega)}^{2} + \| \nabla \mu_{p} \|_{L^{2}(\Omega)}^{2} + \| \nabla \mu_{d} \|_{L^{2}(\Omega)}^{2} \big ).
\end{align}
Take now $\eta$ sufficiently small so that
\begin{equation}\label{eps:source:2}
\begin{aligned}
\left | \int_{\Omega} (S_{p} + S_{d})q \, dx \right | & \leq \frac{1}{2} \| \vu \|_{L^{2}(\Omega)}^{2} + \frac{M_{p}}{4} \| \nabla \mu_{p} \|_{L^{2}(\Omega)}^{2} + \frac{M_{d}}{4} \| \nabla \mu_{d} \|_{L^{2}(\Omega)}^{2} \\
& \quad + C \big ( 1 + \| \phip \|_{L^{2}(\Omega)}^{2} + \| \phid \|_{L^{2}(\Omega)}^{2} \big ).
\end{aligned}
\end{equation}
Then, substituting \eqref{eps:source:1} and \eqref{eps:source:2} into \eqref{eps:main:1} yields
\begin{equation}\label{eps:main:2}
\begin{aligned}
& \frac{d}{dt} \int_{\Omega} F_{\eps}(\phip, \phid) + F_{1}(\phip, \phid) + \frac{1}{2} \big (|\nabla \phip |^{2} + | \nabla \phid |^{2} \big ) \, dx \\
& \qquad + \frac{M_{p}}{2} \| \nabla \mu_{p} \|_{L^{2}(\Omega)}^{2} + \frac{M_{d}}{2} \| \nabla \mu_{d} \|_{L^{2}(\Omega)}^{2} + \frac{1}{2} \| \vu \|_{L^{2}(\Omega)}^{2} \\
& \quad \leq C \big ( 1 + \| \phip \|_{H^{1}(\Omega)}^{2} + \| \phid \|_{H^{1}(\Omega)}^{2} + | (\mu_{p})\OO| + |(\mu_{d})\OO| \big ).
\end{aligned}
\end{equation}
The key point is now to derive uniform estimates on the mean values $|(\mu_{p})\OO|$ and $|(\mu_{d})\OO|$ in order to obtain useful a priori bounds from \eqref{eps:main:2}.
To this aim, we test \eqref{eps:pa2} with $\phip - (\phip)\OO$ and \eqref{eps:da2} with $\phid - (\phid)\OO$. Summing the resulting
relations gives
\begin{equation}\label{eps:mean:1}
\begin{aligned}
& \| \nabla \phip \|_{L^{2}(\Omega)}^{2} + \| \nabla \phid \|_{L^{2}(\Omega)}^{2} + \int_{\Omega} \nabla F_{\eps}(\phip, \phid) \cdot (\phip - (\phip)\OO, \phid - (\phid)\OO)^{\top} \, dx \\
& \quad \leq \int_{\Omega} (\mu_{p} - (\mu_{p})\OO) (\phip - (\phip)\OO) + (\mu_{d} - (\mu_{d})\OO) (\phid - (\phid)\OO) \, dx \\
& \qquad + C \big ( 1 + \| \phip \|_{L^{2}(\Omega)} + \| \phid \|_{L^{2}(\Omega)} \big )  \big (\| \nabla \phip \|_{L^{2}(\Omega)} + \| \nabla \phid \|_{L^{2}(\Omega)} \big ).
\end{aligned}
\end{equation}
At this point we will use the fact that $F_\eps$ satisfies \eqref{Feps:CKRS}, and consider $s = \phip$, $r = \phid$, $S = (\phip)\OO$, $R = (\phid)\OO$.  Then, we find that
\begin{equation}\label{Feps:est:1}
\begin{aligned}
& c_* | \nabla F_\eps (\phip, \phid) - \nabla F_\eps ((\phip)\OO, (\phid)\OO)| \\
& \quad \leq (\nabla F_\eps (\phip, \phid) - \nabla F_\eps ((\phip)\OO, (\phid)\OO) ) \cdot (\phip - (\phip)\OO, \phid - (\phid)\OO)^{\top} + C_*.
\end{aligned}
\end{equation}
Since $((\phip)\OO, (\phid)\OO) \in \Delta_{0}$ for all $t \in [0,T]$, we recall another property of the derivative of the Yosida approximation, namely
\begin{align*}
| \nabla F_\eps (p,q)| \leq | (\partial F_0)^{\circ}(p,q)| \quad \forall (p,q) \in \Delta,
\end{align*}
where $\partial$ denotes here the subdifferential in the sense of convex analysis and $(\partial F_0)^{\circ}(p,q)$ is
the element of minimum norm in the set $\partial F_0(p,q)$, that, at least in principle, could contain more than
one element. Here, however, $F_0$ is assumed to be $C^1$ in $\Delta$ and, consequently,
$| (\partial F_0)^{\circ}(p,q)| = | (\nabla F_0)(p,q)| < \infty$. Then, integrating \eqref{Feps:est:1} and rearranging leads to
\begin{align*}
  c_* \| \nabla F_\eps(\phip, \phid) \|_{L^{1}(\Omega)} & \leq c_* \| (\nabla F_0) ((\phip)\OO, (\phid)\OO) \|_{L^{1}(\Omega)} + C_* | \Omega | \\
   & \quad + \int_{\Omega} \nabla F_\eps (\phip, \phid) \cdot (\phip - (\phip)\OO, \phid - (\phid)\OO)^{\top} \, dx \\
   & \quad + C \| (\nabla F_0) ((\phip)\OO, (\phid)\OO) \|_{L^{2}(\Omega)} \big ( \| \nabla \phip \|_{L^{2}(\Omega)} + \| \nabla \phid \|_{L^{2}(\Omega)} \big ).
\end{align*}
Substituting this inequality into \eqref{eps:mean:1} then yields (cf.~also \cite{Brezis})
\begin{equation}\label{eps:mean:2}
\begin{aligned}
& \| \nabla \phip \|_{L^{2}(\Omega)}^{2} + \| \nabla \phid \|_{L^{2}(\Omega)}^{2} + \| \nabla F_{\eps}(\phip, \phid)\|_{L^{1}(\Omega)} \\
& \quad \leq \frac{M_{p}}{4} \| \nabla \mu_{p} \|_{L^{2}(\Omega)}^{2} + \frac{M_{d}}{4} \| \nabla \mu_{d} \|_{L^{2}(\Omega)}^{2} + C \big ( 1 + \| \phip \|_{H^{1}(\Omega)}^{2} + \| \phid \|_{H^{1}(\Omega)}^{2} \big ).
\end{aligned}
\end{equation}
Then, in light of \eqref{eps:mean:2}, observe that, by testing \eqref{eps:pa2} and \eqref{eps:da2} with $\pm 1$,
we obtain
\begin{equation}\label{eps:mean:3}
\begin{aligned}
 | (\mu_{p})\OO| +  | (\mu_{d}) \OO| &\leq C \big ( 1 + \| \phip \|_{L^{2}(\Omega)} + \| \phid \|_{L^{2}(\Omega)} \big ) + \| \nabla F_{\eps}(\phip, \phid) \|_{L^{1}(\Omega)} \\
& \leq \frac{M_{p}}{4} \| \nabla \mu_{p} \|_{L^{2}(\Omega)}^{2} + \frac{M_{d}}{4} \| \nabla \mu_{d} \|_{L^{2}(\Omega)}^{2} \\
& \quad + C \big ( 1 + \| \phip \|_{H^{1}(\Omega)}^{2} + \| \phid \|_{H^{1}(\Omega)}^{2} \big ).
\end{aligned}
\end{equation}
Returning to \eqref{eps:main:2} and substituting the estimate \eqref{eps:mean:3}, we infer
\begin{equation}\label{eps:main:3}
\begin{aligned}
&\frac{d}{dt} \int_{\Omega} F_{\eps}(\phip, \phid) + F_{1}(\phip, \phid) + \frac{1}{2} \big ( | \nabla \phip |^{2} + | \nabla \phid |^{2} \big ) \, dx \\
& \qquad + \frac{M_{p}}{4} \| \nabla \mu_{p} \|_{L^{2}(\Omega)}^{2} + \frac{M_{d}}{4} \| \nabla \mu_{d} \|_{L^{2}(\Omega)}^{2} + \frac{1}{2} \| \vu \|_{L^{2}(\Omega)}^{2} \\
& \quad \leq C \big ( 1 + \| \phip \|_{H^{1}(\Omega)}^{2} + \| \phid \|_{H^{1}(\Omega)}^{2} \big ).
\end{aligned}
\end{equation}
To \eqref{eps:main:3} we now add the following inequality obtained from testing \eqref{eps:pa1} with $\phip$ and \eqref{eps:da1} with $\phid$ and summing (cf. \eqref{Apri:2}):
\begin{align*}
\frac{1}{2} \frac{d}{dt} \big ( \| \phip \|_{L^{2}(\Omega)}^{2} + \| \phid \|_{L^{2}(\Omega)}^{2} \big )
& \leq \frac{M_{p}}{8} \| \nabla \mu_{p} \|_{L^{2}(\Omega)}^{2} + \frac{M_{d}}{8} \| \nabla \mu_{d} \|_{L^{2}(\Omega)}^{2} + \frac{1}{4} \| \vu \|_{L^{2}(\Omega)}^{2} \\
& \quad + C \big ( 1 + \|\phip \|_{H^{1}(\Omega)}^{2} + \| \phid \|_{H^{1}(\Omega)}^{2} \big ).
\end{align*}
By definition of the Yosida approximation, we have
\begin{align*}
F_\eps (s,r) \leq F_0(s,r) \quad \forall (s,r) \in \RR^{2}.
\end{align*}
Hence, recalling \eqref{fin:ener}, we arrive at
\begin{align*}
\int_{\Omega} F_{\eps}(\varphi_{p,0}, \varphi_{d,0}) + F_{1}(\varphi_{p,0}, \varphi_{d,0}) \, dx \leq C.
\end{align*}
Applying Gronwall's inequality in \eqref{eps:main:3}, we deduce
\begin{equation}\label{eps:main:4}
\begin{aligned}
& \| F_{\eps}(\phip, \phid)\|_{L^{\infty}(0,T;L^{1}(\Omega))} + \| \phip \|_{L^{\infty}(0,T;H^{1}(\Omega))} + \| \phid \|_{L^{\infty}(0,T;H^{1}(\Omega))} \\
& \quad + \| \nabla \mu_{p} \|_{L^{2}(0,T;L^{2}(\Omega))} + \| \nabla \mu_{d} \|_{L^{2}(0,T;L^{2}(\Omega))} + \| \vu \|_{L^{2}(0,T;L^{2}(\Omega))} \leq C.
\end{aligned}
\end{equation}
Thus, returning to \eqref{eps:mean:1}, using the above estimate and performing
some easy calculations, we infer
\begin{align}\label{nabla:Feps}
\| \nabla F_{\eps}(\phip, \phid)\|_{L^{1}(\Omega)} \leq C \big ( 1 + \| \nabla \mu_{p} \|_{L^{2}(\Omega)} + \| \nabla \mu_{d} \|_{L^{2}(\Omega)} \big ),
\end{align}
whence $\nabla F_{\eps}(\phip, \phid)$ is bounded in $L^{2}(0,T;L^{1}(\Omega))$.
In turn, by the first line of \eqref{eps:mean:3} we find that $|(\mu_{p})\OO|$ and $|(\mu_{d})\OO|$
are bounded in $L^{2}(0,T)$. Hence, recalling \eqref{eps:main:4} and using once more the Poincar\'{e} inequality, we get
\begin{align}\label{eps:main:5}
\| \mu_{p} \|_{L^{2}(0,T;H^{1}(\Omega))} + \| \mu_{d} \|_{L^{2}(0,T;H^{1}(\Omega))} \leq C.
\end{align}
Furthermore, recalling \eqref{eps:pressure}, thanks to \eqref{eps:main:4} we now have
\begin{align}\label{eps:pressure:est}
\| q \|_{L^{2}(0,T;H^{1}(\Omega))} \leq C.
\end{align}
Next, we infer estimates on $F_{\eps,p}$ by testing \eqref{eps:pa2} with $F_{\eps,p}(\phip, \phid) - (F_{\eps,p}(\phip, \phid))\OO$, leading to
\begin{align*}
& \| F_{\eps,p}(\phip, \phid) - (F_{\eps,p}(\phip, \phid))\OO \|_{L^{2}(\Omega)}^{2} \\
& \quad\quad+ \int_{\Omega}  F_{\eps,pp}(\phip, \phid) |\nabla \phip|^{2} + F_{\eps,pd}(\phip,\phid) \nabla \phip \cdot \nabla \phid \, dx \\
& \quad= \int_{\Omega} \big ( (\mu_{p} - (\mu_{p})\OO) -  F_{1,p}(\phip, \phid) \big ) (F_{\eps,p}(\phip, \phid) - (F_{\eps,p}(\phip, \phid))\OO)  \, dx,
\end{align*}
where
\begin{align*}
F_{\eps,pp} = \frac{\de^{2} F_{\eps}}{\de \phip^{2}}, \quad F_{\eps,pd} = \frac{\de^{2} F_{\eps}}{\de \phip \de \phid}.
\end{align*}
Adding the similar identity obtained testing \eqref{eps:da2} with $F_{\eps,d}(\phip, \phid) - (F_{\eps,d}(\phip, \phid))\OO$
and employing the Poincar\'{e} inequality together with the linear growth of $\nabla F_{1}$, it is not difficult to deduce
\begin{align*}
& \| F_{\eps,p}(\phip, \phid) - (F_{\eps,p}(\phip, \phid))\OO \|_{L^{2}(\Omega)}^{2} + \|F_{\eps,d}(\phip, \phid) - (F_{\eps,d}(\phip, \phid))\OO \|_{L^{2}(\Omega)}^{2} \\
& \quad + \int_{\Omega} (\nabla \phip, \nabla \phid) \cdot D^{2} F_{\eps}(\phip, \phid) ( \nabla \phip, \nabla \phid)^{\top} \, dx \\
& \quad \leq C \big (1 + \| \nabla \mu_{p} \|_{L^{2}(\Omega)}^{2} + \| \nabla \mu_{d} \|_{L^{2}(\Omega)}^{2} + \| \phip \|_{L^{2}(\Omega)}^{2} + \| \phid \|_{L^{2}(\Omega)}^{2} \big ) .
\end{align*}
Since $F_{\eps}$ is convex, the Hessian $D^{2}F_{\eps}$ is non-negative and consequently we can neglect the integral term on the left-hand side, leading to
(cf.~\eqref{eps:main:4})
\begin{equation}\label{eps:main:6}
\begin{aligned}
& \| F_{\eps,p}(\phip, \phid) - (F_{\eps,p}(\phip, \phid))\OO \|_{L^{2}(0,T;L^{2}(\Omega))} \\
 & \quad + \|F_{\eps,d}(\phip, \phid) - (F_{\eps,d}(\phip, \phid))\OO \|_{L^{2}(0,T;L^{2}(\Omega))} \leq C.
\end{aligned}
\end{equation}
Upon recalling the boundedness of $\nabla F_{\eps}(\phip, \phid)$ in $L^{2}(0,T;L^{1}(\Omega))$
resulting from \eqref{nabla:Feps}, we deduce a control of the quantities $\|(F_{\eps,p})\OO\|_{L^{2}(0,T)}$ and $\|(F_{\eps,d})\OO \|_{L^{2}(0,T)}$.
Hence, from \eqref{eps:main:6} we eventually obtain
\begin{align}\label{eps:main:7}
\| F_{\eps,p}(\phip, \phid) \|_{L^{2}(0,T;L^{2}(\Omega))} + \| F_{\eps,d}(\phip, \phid)\|_{L^{2}(0,T;L^{2}(\Omega))} \leq C.
\end{align}
Viewing \eqref{eps:pa2} and \eqref{eps:da2} as elliptic equations for $\phip$ and $\phid$, respectively, with
right-hand sides bounded in $L^{2}(0,T;L^{2})$ and no-flux boundary conditions, the elliptic regularity theory gives
\begin{align}\label{eps:main:8}
\| \phip \|_{L^{2}(0,T;H^{2}(\Omega))} + \| \phid \|_{L^{2}(0,T;H^{2}(\Omega))} \leq C.
\end{align}
Lastly, from inspection of \eqref{eps:pa1} and \eqref{eps:da1}, and thanks to the estimate \eqref{eps:main:4} we have
\begin{align}\label{eps:main:9}
\| \de_{t} \phip \|_{L^{2}(0,T;H^{1}(\Omega)')} + \| \de_{t} \phid \|_{L^{2}(0,T;H^{1}(\Omega)')} \leq C.
\end{align}


\subsection{Compactness and passing to the limit}
Thanks to the uniform estimates \eqref{eps:main:4}, \eqref{eps:main:5}, \eqref{eps:pressure:est}, \eqref{eps:main:7}, \eqref{eps:main:8}
and \eqref{eps:main:9}, by standard compactness arguments we infer the existence of functions $(\phip, \mu_{p}, \phid, \mu_{d}, q, \vu)$ and  of a pair $(\eta_p, \eta_d)$ such that
\begin{subequations}\label{limm}
\begin{alignat}{3}
\varphi_{i}^{\eps} & \to \varphi_{i} && \text{ weakly}* \text{ in } L^{\infty}(0,T;H^{1}(\Omega)) \cap L^{2}(0,T;H^{2}(\Omega)) \cap H^{1}(0,T;H^{1}(\Omega)'), \label{limm:11}\\
\varphi_{i}^{\eps} & \to \varphi_{i} && \text{ strongly in } C^{0}([0,T];L^{p}(\Omega)) \cap L^{2}(0,T;W^{1,p}(\Omega)), \label{limm:12}\\
\varphi_{i}^{\eps} & \to \varphi_{i} && \text{ a.e. in } \Omega \times (0,T), \label{limm:13}\\
\mu_{i}^{\eps} & \to \mu_{i} && \text{ weakly in } L^{2}(0,T;H^{1}(\Omega)), \label{limm:14}\\
\vu^{\eps} & \to \vu && \text{ weakly in } L^{2}(0,T;L^{2}(\Omega)), \label{limm:15}\\
q^{\eps} & \to q && \text{ weakly in } L^{2}(0,T;H^{1}(\Omega)), \label{limm:16}\
\end{alignat}
\end{subequations}
and
\begin{subequations}
\begin{alignat}{3}
F_{\eps, p}(\fhi_p,\fhi_d)&\to \eta_p&& \text{ weakly in } L^{2}(0,T;L^{2}(\Omega)),\label{limm:17}\\
F_{\eps, d}(\fhi_p,\fhi_d)&\to \eta_d&& \text{ weakly in } L^{2}(0,T;L^{2}(\Omega)), \label{limm:18}
\end{alignat}
\end{subequations}
for any $p < \infty$ in two dimensions and any $p \in [1,6)$ in three dimensions.
Using a similar argument as in the proof of Theorem \ref{thm:reg:eps}, by the a.e convergence of $\phip^{\eps}$ to $\phip$ in $\Omega \times (0,T)$
and Egorov's theorem, we can show that $\{n^{\eps} \}_{\eps \in (0,1)}$ is a Cauchy family in
$L^{2}(0,T;H^{1}(\Omega))$.  Then, there also exists a function $n \in L^{\infty}(0,T;W^{2,r}(\Omega))$, for any $r < \infty$,
with $0 \leq n \leq 1$ a.e.~in $\Omega \times (0,T)$, such that
\begin{align*}
n^{\eps} \to n \text{ strongly in } L^{2}(0,T;H^{1}(\Omega)).
\end{align*}
It now remains to pass to the limit $\eps \to 0$ in \eqref{eps:sys}.  Actually, in view of the above convergence properties,
the argument is very similar to that used before when we pass to the limit $\delta\to 0$. Hence, we just outline the differences which are
mainly related to the terms depending on $F_\eps$. Actually, combining \eqref{limm:12}, \eqref{limm:17}-\eqref{limm:18}
with the standard monotonicity argument in \cite[Prop.~1.1, p.~42]{Ba}, we readily deduce that $(\phip, \phid) \in \Delta$
a.e.~in $\Omega \times (0,T)$ and that $\eta_p=F_{0,p}(\phip, \phid)$, $\eta_d=F_{0,d}(\phip, \phid)$. This in particular implies that the truncation operator $T(\cdot)$ disappears in the limit
formulation of the problem; namely, we have $T(\phip)=\phip$ and $T(\phid)=\phid$ a.e.~in~$Q$.

Let us also point out that from the structural assumption \eqref{struct:2} and from the derivation of \eqref{eps:Unif:1} the limit
functions $\phip$ and $\phid$ satisfy $((\phip)\OO(t), (\phid)\OO(t)) \in \Delta_0$ and
\begin{align*}
0 < c_{1} \leq (\phip)\OO(t), (\phid)\OO(t) \leq c_{2} < 1, \quad c_{1} \leq (\phip + \phid)\OO(t) \leq c_{2}
\end{align*}
for all $t \in [0,T]$. Hence, we have proved that the tuple $(\phip, \mu_{p}, \phid, \mu_{d}, \vu, q, n)$
is a weak solution to system~\eqref{Model} in the sense of Definition~\ref{defn:Soln}.
This concludes the proof of Theorem~\ref{thm:main}.


\section*{Acknowledgements}

This research has been performed in the framework of the project Fondazione Cariplo-Regione Lombardia  MEGAsTAR
``Matema\-tica d'Eccellenza in biologia ed ingegneria come acceleratore
di una nuova strateGia per l'ATtRattivit\`a dell'ateneo pavese''. The present paper
also benefits from the support of the MIUR-PRIN Grant 2015PA5MP7 ``Calculus of Variations'' for GS,
and of the GNAMPA (Gruppo Nazionale per l'Analisi Matematica, la Probabilit\`a e le loro Applicazioni)
of INdAM (Istituto Nazionale di Alta Matematica) for SF, ER, and~GS.
SF is ``titolare di un Assegno di Ricerca dell'Istituto Nazionale di Alta Matematica".


\end{document}